\pgfplotsset{compat=1.10}
\newlength{\hatchspread}
\newlength{\hatchthickness}
\newlength{\hatchshift}
\newcommand{\hatchcolor}{}
\tikzset{hatchspread/.code={\setlength{\hatchspread}{#1}},
         hatchthickness/.code={\setlength{\hatchthickness}{#1}},
         hatchshift/.code={\setlength{\hatchshift}{#1}},
         hatchcolor/.code={\renewcommand{\hatchcolor}{#1}}}
\tikzset{hatchspread=3pt,
         hatchthickness=0.4pt,
         hatchshift=1pt,
         hatchcolor=black}
\newtheorem{theorem}{Theorem}[section]
\newtheorem{definition}{Definition}[section]
\newtheorem{proposition}{Proposition}[section]
\newtheorem{lemma}{Lemma}[section]
\newtheorem{example}{Example}[section]
\newtheorem{remark}{Remark}[section]
\newtheorem{corollary}{Corollary}[section]
\numberwithin{equation}{section}
\definecolor{Blue}{rgb}{0,0,0.9}
\begin{document}

\title{Extension, embedding and global stability in two dimensional monotone maps}
\author{Ahmad Al-Salman,\ Ziyad AlSharawi\thanks{ zsharawi@aus.edu}\ \ and Sadok Kallel}
\date{\vspace{-4ex}}

\maketitle
\begin{abstract}
We consider the general second order difference equation $x_{n+1}=F(x_n,x_{n-1})$ in which $F$ is continuous and of mixed monotonicity in its arguments. In equations with negative terms, a persistent set can be a proper subset of the positive orthant, which motivates studying global stability with respect to compact invariant domains. In this paper, we assume that $F$ has a semi-convex compact invariant domain, then make an extension of $F$ on a rectangular domain that contains the invariant domain. The extension preserves the continuity and monotonicity of $F.$ Then we  use the embedding technique to embed the dynamical system generated by the extended map into a higher dimensional dynamical system, which we use to characterize the asymptotic dynamics of the original system. Some illustrative examples are given at the end.
\end{abstract}

\section{Introduction}
Second order difference equations $x_{n+1}=F(x_n,x_{n-1})$ in which $F$ is non-decreasing in one argument and non-increasing in the other are known as second order difference equations of mixed monotonicity. Such equations have been investigated in the literature \cite{Ca-La2008,Gr-La2005,Ko-La1993,Ku-La2002,Se2003} when the positive orthant is invariant (forward invariant), and the particular interest is usually given to the stability of equilibrium solutions (the fixed points of the map $F$). This interest stems from the fact that the dynamics of the equilibria can be sufficient to characterize the dynamics of the system. For instance, when $f(x)$ is continuous, decreasing, positive, $f(0)>1$ and $xf(x)$ is increasing, the positive quadrant is an invariant of the equation $x_{n+1}=F(x_n,x_{n-1})=x_nf(x_{n-1})+h,\;h>0,$ and the unique positive equilibrium  is a global attractor \cite{Al2013,Ny2008}. Under various levels of  assumptions on the map $F,$ several efforts have been successful in furnishing results of paramount significance in characterizing the asymptotic behaviour \cite{Ko-La1993,Ku-La-Si1998, Ku-Me2006, Go-Ha1994,Sm2008,Sm2006}. The work of Kulenovic et al. \cite{Ku-La-Si1998,Ku-Me2006} assumes the system $(x,y)=(F(x,y), F(y,x))$ has no nontrivial solution, and relies heavily on the assumption that the map $F$ assumes an invariant box to prove  global stability of the unique equilibrium. A more general approach was given by Gouze and Hadeler \cite{Go-Ha1994} and developed further by Smith \cite{Sm2008,Sm2006}. This approach relies on the notion of embedding the dynamical system generated by $F$ into a larger symmetric monotone dynamical system (the references cited in \cite{Sm2008,Sm2006} give a good account of the development of this approach), then the dynamics of the embedded system can be used to characterize the dynamics of the original system.
\\

In discrete systems with negative coefficients, the positive orthant may not be an invariant domain \cite{Ab-Al-Rh2013}; however, it is quite common to limit the interest to solutions that start in the nonnegative orthant and stay within the nonnegative orthant. Such solutions define what is known as the persistent set. When an invariant domain is found, it may not be a box, which makes the developed theory in the aforementioned literature not enough to address the asymptotic dynamics.
In this paper, we are concerned with the global stability analysis of difference equations
\begin{equation}\label{Eq-General}
x_{n+1} = F(x_n, x_{n-1}),\quad n = 0,1,\ldots,
\end{equation}
where $F$ is continuous and monotonic function in its arguments. In particular, $F$
is of mixed monotonicity and assumes a compact invariant domain, which is not
necessarily a box. The case when $F$ is either increasing in both arguments or
decreasing in the first argument while increasing in the second is covered by the results
of Amleh, Ladas and Camouzis \cite{Am-Ca-La2008,Ca-La2006} and Kulenovi\'{c} and Merino \cite{Ku-Me2006,Ku-Me2009,Ku-Me2010}. In
these cases every solution breaks into two monotone subsequences and so every
bounded solution converges to either an equilibrium solution, period-two
solution or to the singular point on the boundary. Thus in this case the main
problem is to determine the basins of attraction of different attractors. This
is done using the results in \cite{Ku-Me2006,Ku-Me2009,Ku-Me2010} which clearly determine the boundaries of
these basins of attraction as the stable manifolds of some saddle point or nonhyperbolic
equilibrium solutions or period-two solutions.
\\

 For writing and reading conveniences, we use $F(\uparrow,\cdot)$ or $F(\downarrow,\cdot)$ to denote that $F$ is non-decreasing or non-increasing in its first variable, respectively.   Similarly, for the second variable. Thus, our interest in this paper is limited to maps that satisfy  $F(\uparrow,\uparrow), $ $F(\uparrow,\downarrow),$ $F(\downarrow,\uparrow)$ or $F(\downarrow,\downarrow).$
We focus on $F(\uparrow,\downarrow),$ and prove that for a continuous map $z=F(x,y)$ with mixed monotonicity on a compact and connected invariant domain $\Omega,$ if
$ T(x,y)=(F(x,y),x): \Omega\;\rightarrow\;\Omega$
and $F$ can be extended to $\widetilde{F}$ over a rectangular invariant domain containing $\Omega$ such that the system
$(\widetilde{F}(x,y),\widetilde{F}(y,x))=(x,y)$
 has no solutions other than the fixed points of $F,$ then $x_{n+1}=F(x_n,x_{n-1})$ must converge to a fixed point of $F$ for all  $(x_0,x_{-1})\in \Omega.$ We achieve this task based on several results. First, we give a method for constructing an extension in section two. Then in section three, we embed the extended system into a higher dimensional system using several auxiliary maps, then discuss the characteristics of the embedded system and its effect on the asymptotic behavior of the original systems. In section four, we consider a concrete case and apply the developed theory to establish global stability with respect to a certain invariant domain.  Finally, we end this paper by some discussion and conclusion.

\section{Invariant domains and map extension}

This paper considers the second order difference equation
\begin{equation}\label{Eq-DE1}
x_{n+1}=F(x_n,x_{n-1}), \quad \text{where}\quad n=0,1,\ldots \quad \text{and}\quad  x_{-1},x_0\geq 0.
 \end{equation}
where $F$ is continuous on a compact and connected invariant domain $\Omega\subset [0,\infty)\times [0,\infty)=:{\mathbb{R}_+}^2.$ Here $\Omega$ is invariant in the sense that if $(x,y)\in \Omega,$ then $(F(x,y),x)\in \Omega.$ Obviously, $F$ must have at least one fixed point $x^*,$ i.e., $F(x^*,x^*)=x^*,$ which is called an equilibrium solution (or a steady state solution) of Eq. (\ref{Eq-DE1}). Furthermore, we may consider  $F:\; \Omega\;\rightarrow\; [m,M],$ where
 $$m:=\min_{(x,y)\in \Omega} F\quad \text{and}\quad M:=\max_{(x,y)\in  \Omega} F,$$
 and by starting from the second iterate of Eq. (\ref{Eq-DE1}), there is no loss of generality if we assume $\Omega\subseteq [m,M]\times[m,M]=:[m,M]^2.$
\\

 Throughout this paper, we consider two partial order relations on $\Omega,$ namely, the southeast order $\leq_{se}$ and the northeast order $\leq_{ne}.$ The southeast order is defined as
$$
(x,y)\leq_{se}(u,v)\quad \text{if and only if}\quad x\leq u\quad \text{and}\quad v\leq y,
$$
while the northeast order is defined as
$$
(x,y)\leq_{ne}(u,v)\quad \text{if and only if}\quad x\leq u\quad \text{and}\quad y\leq v.
$$
A well known observation \cite{Go-Ha1994, Sm2006,Sm2008}
is that if a function $F:\, I^2\rightarrow I$, with $I=[a,b]$ or $I=\mathbb{R}_+$, satisfies $F(\uparrow,\downarrow),$ then the symmetric map
\begin{equation}
G(x,y)=(F(x,y),F(y,x))
\label{Eq-SymmetricMap1}
\end{equation}%
is non-decreasing with respect to $\leq_{se},$ i.e., $G(\uparrow)$. This simple observation has had useful applications in the literature \cite{Go-Ha1994,Sm2006,Sm2008}
for it allows to deduce global convergence
properties for $F$ from the dynamics of the symmetric map $G$ in favorable situations (see \S\ref{embeddingsection}).
In general however, given a map $F:\; \Omega\rightarrow [m,M]$, $\Omega$ may not necessarily be a rectangular region, and the map $G$ is not immediately well-defined.
(see the examples of \S\ref{Sec-Application}). This phenomenon makes
 the above definition of the symmetric map $G$ ill-defined. In this section, we go around this problem by showing that we can extend a continuous map $F: \;\Omega\;\rightarrow [a,b]$, $\Omega\subset [a,b]^2$ under some mild conditions into a map $\widetilde{F}:\; [a,b]^2\;\rightarrow\; [a,b]$ that satisfies the following properties:
 \begin{description}
 \item{(C1)} $\widetilde{F}$ is continuous on $[a,b]^2\supseteq \Omega.$
 \item{(C2)} $F(x,y)=\widetilde{F}(x,y)$ for all $(x,y)\in \Omega.$
 \item{(C3)} $\widetilde{F}$ has the same monotonicity behaviour as $F.$ In particular, $\widetilde{F}(\uparrow,\downarrow)$ whenever $F(\uparrow,\downarrow),$ and $\widetilde{F}(\downarrow,\uparrow)$ whenever $F(\downarrow,\uparrow).$
 \end{description}
 Throughout this paper, whenever talking about an extension $\widetilde{F},$ $\widetilde{F}$ must satisfy (C1), (C2) and (C3). It is interesting to point out that
the image set of our extension $\widetilde{F}$ is the same as that of $F$. We formalize these concepts in the following definition:

\begin{definition}\label{defext}
Let $F$ be a continuous function of mixed monotonicity on a domain $\Omega\subset \mathbb{R}^2.$ A continuous function $\widetilde{F}$ is called \emph{an extension} of $F$ if $\widetilde{F}(X)=F(X)$ for all $X\in \Omega$ and $\widetilde{F}$ is of same mixed monotonicity as $F.$  Further, $\widetilde{F}$ is called a \emph{nice extension} if  the range of $\widetilde{F}$ and the range of $F$ coincide.
 \end{definition}

 \subsection{Extending $F$ from a convex domain to a rectangular domain}
 Assume $\Omega$ is compact, and we start by assuming the boundary $\partial \Omega$ to be a piecewise smooth Jordan curve (simple closed). We find it convenient to focus on the extension of $F(\downarrow,\uparrow).$   The extension of $G(\uparrow,\downarrow)$ can be obtained by considering $\widetilde{G}(x,y)=\widetilde{F}(y,x).$ For a point $p=(x,y)$ in the plane, we write
$(x^+,y)$ (respectively $(x^-,y)$) the point obtained as the intersection of the horizontal line through $p$ with the curve $\partial\Omega$ on the right (respectively on the left), if this point exists. Similarly we write $(x,y^+)$ (respectively $(x,y^-)$) the point obtained as the intersection of the vertical line through $p$ with the curve $\partial\Omega$ on the top (respectively on the bottom). Part (a) of Fig. \ref{Fig-Origami1} illustrates our notations and the projection of an external point to the boundary $\partial \Omega.$
\begin{center}
\begin{figure}[!h]
\definecolor{ffqqqq}{rgb}{1.,0.,0.}
\begin{minipage}[t]{0.48\textwidth}
\raggedleft
\begin{center}
\begin{tikzpicture}[scale=0.7]
\draw[line width=0.8pt,color=blue,pattern=dots,pattern color=brown]  plot[smooth, tension=.8] coordinates {(-3.5,0.5) (-3,2.0) (-1,3.5) (1.5,3) (3,2.5) (4.5,2.0) (4,1.0) (2.5,0) (1,-1.0) (-1,-2) (-2.5,-1) (-3.5,0.5)};
\draw[line width=1.0pt,scale=1.0,red] (-1,-2.05) -- (-3.53,-2.0) -- (-3.53,3.6) -- (4.6,3.6) -- (4.6,-2.0) -- (-1.0,-2.05);
\draw[xscale=1.0] (-0.5,1.0) node[right,rotate=0] {\Large $\Omega$};
\draw[xscale=1.0] (3.0,3.0) node[above,rotate=0] { \footnotesize $(x,y)$};
\draw[line width=1.0pt,red,fill=yellow] (3.0,3.0) circle (1.5pt);
\draw[line width=0.6pt,scale=1.0,black,dotted] (3,3) -- (3,2.4);
\draw[xscale=1.0] (3.0,2.55) node[below,rotate=-10] { \footnotesize $\boldsymbol{(x,y^-)}$};
\draw[line width=1.0pt,red,fill=yellow] (3.0,2.5) circle (1.5pt);
\draw[line width=0.6pt,scale=1.0,black,dotted] (3,3) -- (1.7,3);
\draw[xscale=1.0] (1.5,3.05) node[below,rotate=-10] { \footnotesize $\boldsymbol{(x^-,y)}$};
\draw[line width=1.0pt,red,fill=yellow] (1.7,3.0) circle (1.5pt);
\draw[xscale=1.0] (3.0,-1.0) node[right,rotate=0] {\footnotesize $(x,y)$};
\draw[line width=1.0pt,red,fill=yellow] (3.0,-1.0) circle (1.5pt);
\draw[line width=0.6pt,scale=1.0,black,dotted] (3,-1) -- (3,0.4);
\draw[xscale=1.0] (3.0,0.5) node[below,rotate=40] { \footnotesize $(x,y^+)$};
\draw[line width=1.0pt,red,fill=yellow] (3.0,0.35) circle (1.5pt);
\draw[line width=0.6pt,scale=1.0,black,dotted] (3,-1) -- (1,-1);
\draw[xscale=1.0] (1.0,-1.0) node[below,rotate=30] { \footnotesize $(x^-,y)$};
\draw[line width=1.0pt,red,fill=yellow] (1.0,-1.0) circle (1.5pt);
\draw[xscale=1.0] (-3.5,-1.7) node[right,rotate=0] {\footnotesize $\boldsymbol{(x,y)}$};
\draw[line width=1.0pt,red,fill=yellow] (-3.0,-1.5) circle (1.5pt);
\draw[line width=0.6pt,scale=1.0,black,dotted] (-3.0,-1.5) -- (-2.0,-1.5);
\draw[xscale=1.0] (-2.2,-1.0) node[right,rotate=-35] { \footnotesize $\boldsymbol{(x^+,y)}$};
\draw[line width=1.0pt,red,fill=yellow] (-2.0,-1.5) circle (1.5pt);
\draw[line width=0.6pt,scale=1.0,black,dotted] (-3,-1.5) -- (-3,-0.2);
\draw[xscale=1.0] (-3.5,0.0) node[above right,rotate=-45] { \footnotesize $\boldsymbol{(x,y^+)}$};
\draw[line width=1.0pt,red,fill=yellow] (-3.0,-0.5) circle (1.5pt);
\draw[xscale=1.0] (-3.0,3.0) node[above,rotate=0] {\footnotesize $(x,y)$};
\draw[line width=1.0pt,red,fill=yellow] (-3.0,3.0) circle (1.5pt);
\draw[line width=0.6pt,scale=1.0,black,dotted] (-3.0,3.0) -- (-2.0,3.0);
\draw[line width=1.0pt,red,fill=yellow] (-2.0,3.0) circle (1.5pt);
\draw[line width=0.6pt,scale=1.0,black,dotted] (-3,3.0) -- (-3,2.0);
\draw[line width=1.0pt,red,fill=yellow] (-3.0,2.0) circle (1.5pt);
\draw[xscale=1.0] (0.0,-2.5) node[below] {\footnotesize (a)};
\end{tikzpicture}
\end{center}
\end{minipage}
\begin{minipage}[t]{0.48\textwidth}
\raggedright
\begin{center}
\begin{tikzpicture}[scale=0.7]
\draw[line width=1.2pt,color=blue,dotted,pattern=dots,pattern color=brown]  plot[smooth, tension=.8] coordinates {(-3.5,0.5) (-3,2.0) (-1,3.5) (1.5,3) (3,2.5) (4.5,2.0) (4,1.0) (2.5,0) (1,-1.0) (-1,-2) (-2.5,-1) (-3.5,0.5)};
\draw[line width=0.8pt,color=blue,pattern=dots,pattern color=brown] (-3.5,0.5) -- (-3.5,2.0) -- (-3,2.0) -- (-3,2.5) -- (-2.5,2.5) --(-2.5,3) -- (-2.5,3.55)-- (1.5,3.55) --(1.5,3) -- (2.5,3) -- (2.5,2.7) -- (4,2.7) -- (4,2.25) -- (4.55,2.25) -- (4.55,1) -- (4,1.0) --(4,0) --(2.5,0)--(2.5,-1)-- (2.0,-1) -- (1,-1.0)--(1,-1.98) -- (-2,-1.98) -- (-2,-1.5)--(-2.8,-1.5)--(-2.8,-0.75)--(-3.48,-0.75) -- (-3.48,0.5);
\draw[line width=1.0pt,scale=1.0,red] (-1,-2.05) -- (-3.53,-2.0) -- (-3.53,3.6) -- (4.6,3.6) -- (4.6,-2.0) -- (-1.0,-2.05);
\draw[xscale=1.0] (-0.5,1.0) node[right,rotate=0] {\Large $\Omega$};
\draw[xscale=1.0] (0.0,-2.5) node[below] {\footnotesize (b)};
\end{tikzpicture}
\end{center}
\end{minipage}%
\caption{In part (a) of this figure, we illustrate our notation of projecting a point $(x,y)$ to the boundary $\partial \Omega.$ In part (b), we illustrate the notion of putting the invariant domain $\Omega$ inside an Origami domain. }\label{Fig-Origami1}
\end{figure}
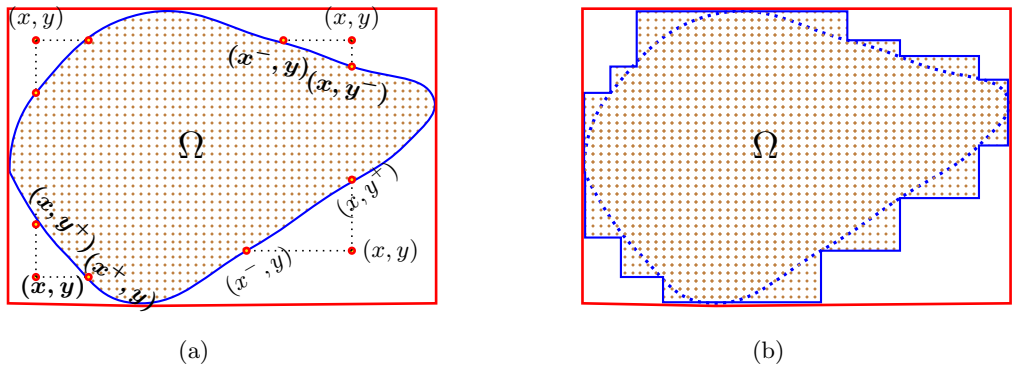
\end{center}

The next result gives a condition and a property on $F$ which are needed in the sequel.

\begin{lemma}\label{Lem-Sadok1}
Let $\Omega \subseteq \mathbb{R}^{2}$ be compact such that the
boundary $\partial \Omega $ forms a Jordan curve. Suppose that $F$ is
continuous on $\Omega .$ If ${\bf r} :[0,1]\longrightarrow \partial \Omega $ is a
parametrization so that $f:=F\circ {\bf r} $ is differentiable on $[0,1]$
and $f^{\prime }\;$ has only finitely many zeroes$,$ then $\partial\Omega$
can be decomposed into finite number of pieces for which $f$ is monotonic.
\end{lemma}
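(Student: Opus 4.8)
The plan is to reduce the statement to an elementary one-variable fact and then transport it back to $\partial\Omega$ via the parametrization ${\bf r}$. Write $f=F\circ{\bf r}:[0,1]\to\mathbb{R}$; by hypothesis $f$ is differentiable on $[0,1]$ and the set $Z:=\{t\in[0,1]:f'(t)=0\}$ is finite. Enumerate $Z\cup\{0,1\}$ as $0=t_{0}<t_{1}<\cdots<t_{N}=1$, and set $J_{i}:=[t_{i},t_{i+1}]$. The claim will be that $f$ is monotone on each $J_{i}$; since there are only $N$ of these intervals, transporting them through ${\bf r}$ will give the desired finite decomposition.

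The heart of the argument is to upgrade ``$f'$ is nowhere zero on the open interval $(t_{i},t_{i+1})$'' to ``$f'$ has a constant sign on $(t_{i},t_{i+1})$''. This is precisely where Darboux's theorem is needed: although $f'$ is not assumed continuous, it is a derivative and therefore has the intermediate value property. If $f'$ assumed both a positive and a negative value on $(t_{i},t_{i+1})$, it would have to vanish somewhere strictly between them, contradicting the fact that all zeroes of $f'$ lie among the $t_{j}$. Hence $f'$ keeps one fixed sign on $(t_{i},t_{i+1})$, so $f$ is strictly monotone there, and by continuity of $f$ this monotonicity extends to the closed interval $J_{i}$.

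To conclude, since ${\bf r}$ maps $[0,1]$ onto $\partial\Omega$, the arcs $\gamma_{i}:={\bf r}(J_{i})$ for $i=0,1,\ldots,N-1$ cover $\partial\Omega$ and meet only at their endpoints, and on each $\gamma_{i}$ the function $F\circ{\bf r}$ — namely $f$ restricted to $J_{i}$ — is monotone in the parameter. This realizes $\partial\Omega$ as a union of at most $N\le\#Z+1$ pieces on each of which $f$ is monotonic, which is exactly the assertion of the lemma.

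I do not expect a genuine obstacle here; the statement is essentially soft. The two points that merit care are the appeal to Darboux's theorem in place of continuity of $f'$ (which is not part of the hypotheses), and a small amount of bookkeeping coming from the fact that $[0,1]$ parametrizes a \emph{closed} curve, so $t_{0}=0$ and $t_{N}=1$ map to the same point of $\partial\Omega$ and the first and last arcs abut there; neither affects the finiteness of the decomposition.
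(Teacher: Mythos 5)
Your proof is correct, and its core step is the same as the paper's: between consecutive zeroes of $f'$ the derivative cannot change sign because a derivative has the intermediate value property, so $f$ is monotone on each of the finitely many closed subintervals determined by the zeroes. Two small differences are worth noting. First, you correctly attribute the intermediate value property of derivatives to Darboux's theorem; the paper invokes ``Dini's theorem'' for this step, which is a misnomer (Dini's theorem concerns monotone convergence of function sequences), so your citation is the right one. Second, after establishing sign constancy the paper appends a compactness/open-cover argument (covering $[0,1]$ by intervals on which $f'(t)\geq 0$ or $f'(t)<0$ and extracting a finite subcover) to produce the finite decomposition; your explicit partition $0=t_0<t_1<\cdots<t_N=1$ by the zeroes of $f'$ together with the endpoints makes that extra step unnecessary and is cleaner, since the subcover in the paper's version need not consist of intervals on which $f$ is monotone in any obvious way. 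Your remark about ${\bf r}(0)={\bf r}(1)$ causing the first and last arcs to abut is exactly the right bookkeeping and does not affect finiteness.
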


\begin{proof} Let $0\leq t_{1}<t_{2}<...<t_{m}\leq 1$ be the
zeroes of $f^{\prime }$. Then $f^{\prime }$ has a single
sign in each interval $(t_{i},t_{i+1}),1\leq i\leq m-1$. Otherwise,
if $f^{\prime }$ changes sign in $(t_{i},t_{i+1})$ for
some $i$, then by Dini's theorem, $f^{\prime }$  must have a
zero in $(t_{i},t_{i+1})$ which is impossible. Since $\partial
\Omega $ forms a Jordan curve, there exists a continuous map ${\bf r}
:\;[0,1]\;\rightarrow \;\mathbb{R}^{2}$ such that ${\bf r} (0)={\bf r} (1)$
and the restriction of ${\bf r} $ to $[0,1)$ is injective. From the
assumption that $f$ is differentiable on $[0,1]$ and $f^{\prime }\;$%
has only finitely many zeroes$,$ for each $t\in \lbrack 0,1],$
there exists an open interval $I_{t}\subseteq \lbrack 0,1]$ such that $%
f^{\prime }(t)\geq 0$ or $f^{\prime }(t)<0.$ Now, $\displaystyle{\cup
_{t}I_{t}}$ forms an open cover of $[0,1].$ The compactness implies the
existence of a finite subcover, say $I_{0},I_{1},\ldots ,I_{n}.$ Hence, $f$
is monotonic on each piece of the finite subcover, which completes the proof.
\end{proof}

Consider a portion of $\partial \Omega$ parametrized by ${\bf r}(t)=(x(t),y(t)),  t\in I\subset [0,1],$ for which $F\circ {\bf r}:\;I\rightarrow\mathbb R$ is monotonic. We explain how to extend $F$ on a rectangular domain containing that portion.

\begin{lemma}[Ray Extension]\label{Lem-Sadok2}
Let $F$ be continuous and of mixed monotonicity on a compact and convex set $\Omega\subset \mathbb{R}^2.$ Suppose that  ${\bf r}(t),\; t_0\leq t\leq t_1$ is a counterclockwise parametrization of a portion of $\partial \Omega$ on which $f=F\circ{\bf r}$ is monotonic. Then $F$ can be extended to a function of same mixed monotonicity on the portion of a rectangular domain bounded on the left by $\bf r$.
\end{lemma}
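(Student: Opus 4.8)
The plan is to reduce to the case $F(\downarrow,\uparrow)$, as in the discussion preceding the lemma (the case $F(\uparrow,\downarrow)$ then follows at once from the transpose $\widetilde G(x,y)=\widetilde F(y,x)$), and then to produce $\widetilde F$ on the region to the right of $\bf r$ by one explicit rule: slide each point horizontally until it meets $\bf r$ and copy the value of $F$ there. Since $\Omega$ is convex, $\partial\Omega$ splits at its four coordinate–extreme points into arcs on each of which both coordinates are monotone; intersecting this finite decomposition with the finite $F\circ{\bf r}$–monotone decomposition furnished by Lemma~\ref{Lem-Sadok1}, I may assume the given portion ${\bf r}(t)=(x(t),y(t))$, $t_0\le t\le t_1$, has $y(\cdot)$ strictly monotone, so that ${\bf r}$ is the graph $x=\xi(y)$, $y\in[c,d]$, of a function $\xi$ that is continuous because $\Omega$ is convex and compact. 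The counterclockwise orientation, together with the requirement that $\bf r$ bound the extension region on the left (so $\Omega$ lies to the left of $\bf r$ and the new region to its right is exterior to $\Omega$), places $\bf r$ on the upper part of the right boundary of $\Omega$, whence $\xi$ is non-increasing on $[c,d]$; equivalently, along $\bf r$ the two coordinates move in opposite directions. This last fact is what everything rests on.

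Now fix a rectangle containing $\Omega$ with right edge $x=b$, put $D:=\{(x,y):c\le y\le d,\ \xi(y)\le x\le b\}$, and define
\[
\widetilde F(x,y):=F(\xi(y),y)\qquad\text{for }(x,y)\in D,
\]
keeping $\widetilde F=F$ on $\Omega$. Continuity of $\xi$ and of $F$ gives (C1); and on $\bf r$, where $x=\xi(y)$, the definition reads $\widetilde F(\xi(y),y)=F(\xi(y),y)$, so (C2) holds and the two formulas glue to a continuous function on $\Omega\cup D$, whose range is clearly contained in — indeed equal to — that of $F$.

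For (C3), monotonicity in $x$ is immediate on $D$, where $\widetilde F$ does not depend on $x$, and across the segment $x=\xi(y)$ the glued map stays non-increasing in $x$ because $F(x,y)\ge F(\xi(y),y)=\widetilde F(\xi(y),y)$ whenever $(x,y)\in\Omega$ with $x\le\xi(y)$, by $F(\downarrow,\cdot)$. For monotonicity in $y$ the two–step comparisons through $\xi$ are the point: for $c\le y'<y''\le d$ and any admissible $x$,
\[
\widetilde F(x,y')=F(\xi(y'),y')\le F(\xi(y''),y')\le F(\xi(y''),y'')=\widetilde F(x,y''),
\]
the first inequality by $\xi(y')\ge\xi(y'')$ and $F(\downarrow,\cdot)$, the second by $F(\cdot,\uparrow)$; and for a point of $\Omega$ against a point of $D$ with the same abscissa $x$ one sets $y^{\ast}:=\xi^{-1}(x)$ (so the $\Omega$–heights at abscissa $x$ are $y\le y^{\ast}$ and the $D$–heights are $y\ge y^{\ast}$), and for $y'<y^{\ast}<y''$ obtains $F(x,y')\le F(x,y^{\ast})$ by $F(\cdot,\uparrow)$ and $F(x,y^{\ast})\le F(\xi(y''),y^{\ast})\le F(\xi(y''),y'')$ by $F(\downarrow,\cdot)$ and then $F(\cdot,\uparrow)$, using $\xi(y'')\le\xi(y^{\ast})=x$. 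Hence $\widetilde F$ is of the same mixed monotonicity as $F$ on $\Omega\cup D$, which is the assertion.

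The main obstacle is concentrated in the single geometric claim that $\xi$ is monotone in the correct sense (non-increasing): if the coordinates were co-monotone along $\bf r$, the two chained inequalities above would point in opposite directions and no coordinate–direction ray extension could preserve both monotonicities at once. So the real content of the hypotheses ``counterclockwise'' and ``$\bf r$ bounds the region on the left'' is precisely that they exclude that orientation — one extends only across arcs of $\partial\Omega$ on which $x$ and $y$ are anti-monotone — and the only further care needed is the reduction, via Lemma~\ref{Lem-Sadok1} and the coordinate–extreme decomposition, to a piece on which $y(\cdot)$ is monotone, so that the horizontal–ray rule is single-valued.
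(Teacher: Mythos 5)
There is a genuine gap, and it is exactly at the point you yourself flag as the crux: the claim that the counterclockwise orientation together with ``bounded on the left by ${\bf r}$'' forces the arc to lie on the upper right of $\partial\Omega$, so that $\xi$ is non-increasing and $x,y$ are anti-monotone along ${\bf r}$. In the paper ``left'' is meant with respect to the orientation (the region to be filled is on the exterior side of the curve), not geographic left, and the lemma is applied in Lemma \ref{Lem-Origami1} to \emph{every} non-vertical, non-horizontal piece of $\partial\Omega$ in order to build the origami domain; this includes the arcs on which $x$ and $y$ are co-monotone (the six cases of Figure \ref{Fig-Convex1}, e.g.\ case (ii) with $f\uparrow$ and case (iv) with $f\downarrow$). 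Even under your geographic reading, the lower-right (SE) arc of a convex region bounds an exterior region to its right and there $\xi$ is non-\emph{de}creasing, so your key inequality $F(\xi(y'),y')\le F(\xi(y''),y')$ reverses and the argument for monotonicity in $y$ collapses. Thus your proof covers only the anti-monotone arcs (case (i) and its mirror), which is precisely the case where the hypothesis that $f=F\circ{\bf r}$ is monotonic is automatic from $F(\downarrow,\uparrow)$ — and indeed your argument never uses that hypothesis, a sign that the substantive cases have been defined away rather than treated.

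Your closing remark, that on co-monotone arcs no coordinate-direction ray extension can preserve both monotonicities, is also not correct, and it is here that the paper's proof does its real work: on such arcs one must \emph{choose} the ray direction according to the monotonicity of $f$. For instance, on the SE arc with $f\uparrow$ the horizontal projection $\widetilde F(x,y)=F(x^-,y)$ does work — monotonicity in $y$ then comes from $f\uparrow$ along the arc rather than from the chain through $\xi$ — while with $f\downarrow$ one must instead project vertically, $\widetilde F(x,y)=F(x,y^+)$; similarly cases (ii) and (iii) use vertical and horizontal rays respectively. So the fix is not to argue the co-monotone cases cannot occur (they must occur if the extension is to go all the way around $\partial\Omega$ as Lemma \ref{Lem-Origami1} requires), but to split into the cases of Figure \ref{Fig-Convex1} and, in each, select the horizontal or vertical ray for which constancy along rays gives one monotonicity and the assumed monotonicity of $f$ (via Lemma \ref{Lem-Sadok1}) gives the other.
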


\begin{proof}
Again our proof is given for $F(\downarrow,\uparrow).$ If ${\bf r} (t)$ is of the form $(x(t),c)$ (horizontal) or $(c,y(t))$ (vertical) on $[t_0,t_1]$, then nothing needs to be done. Otherwise we proceed in cases.
 If $f(\uparrow),$ we have three cases to tackle as shown in Part (a) of Fig.  \ref{Fig-Convex1}. Part (b) of Fig.  \ref{Fig-Convex1} covers the case $f(\downarrow)$. For a point $p=(x,y)$ out of the shaded region, the arrow in each case indicates the direction of the projection we take. For instance, if $p$ is in the unshaded region of (iv) in Part (a), then
$$\widetilde{F}(x,y) = \begin{cases} F(x,y),&\text{if}\; (x,y)\in\Omega\\
F({x^-},y),& \text{if}\; (x,y)\in \Omega^c,
\end{cases}$$
and if $p$ is in the unshaded region of (ii) in Part (a), then
$$\widetilde{F}(x,y) = \begin{cases} F(x,y),&\text{if}\; (x,y)\in\Omega\\
F({x},y^-),&\text{if}\; (x,y)\in \Omega^c.
\end{cases}$$
The other cases follow the same principle.
\end{proof}

\begin{center}
\begin{figure}[!h]
\definecolor{zzttqq}{rgb}{0.6,0.2,0.}
\definecolor{cqcqcq}{rgb}{0.7529411764705882,0.7529411764705882,0.7529411764705882}
\begin{minipage}[t]{0.5\textwidth}
\raggedleft
\begin{center}
\begin{tikzpicture}[line cap=round,line join=round,x=1.0cm,y=1.0cm,scale=1.2]
\draw[-triangle 45,line width=1.0pt,smooth,blue] (1.3,1.3) -- (1.1,1.5);
\draw[-triangle 45,line width=1.0pt,smooth,blue] (-1.3,1.3) -- (-1.5,1.1);
\draw[line width=0.8pt,smooth,green,->] (1.7,1.6) -- (1.15,1.6);
\draw[line width=0.8pt,smooth,green,->] (-1.6,1.7) -- (-1.6,1.15);
\draw[line width=0.8pt,smooth,green,->] (1.7,-1.6) -- (1.15,-1.6);
\draw[color=black] (1,1.7) node[above] {\footnotesize (i)};
\draw[color=black] (-1,1.7) node[above] {\footnotesize (ii)};
\draw[color=black] (1,-0.35) node[above] {\footnotesize (iv)};
\draw[-triangle 45, line width=1.0pt,smooth,blue] (1.3,-1.3) -- (1.5,-1.1);
\draw[line width=0.8pt,smooth,red] (1.75,0.25) -- (1.75,1.75)--(0.25,1.75);
\draw[line width=0.8pt,smooth,red] (-0.25,1.75) -- (-1.75,1.75)--(-1.75,0.25);
\draw[line width=0.8pt,xscale=1.0,smooth,red] (0.25,-1.75) -- (1.75,-1.75)--(1.75,-0.25);
\draw [shift={(0.25,0.25)},line width=0.8pt,color=blue,fill=zzttqq,fill opacity=0.10000000149011612]  (0,0) --  plot[domain=0.:1.5707963267948966,variable=\t]({1.*1.5*cos(\t r)+0.*1.5*sin(\t r)},{0.*1.5*cos(\t r)+1.*1.5*sin(\t r)}) -- cycle ;
\draw [shift={(-0.25,0.25)},line width=0.8pt,color=blue,fill=zzttqq,fill opacity=0.10000000149011612]  (0,0) --  plot[domain=1.5707963267948966:3.141592653589793,variable=\t]({1.*1.5*cos(\t r)+0.*1.5*sin(\t r)},{0.*1.5*cos(\t r)+1.*1.5*sin(\t r)}) -- cycle ;
\draw [shift={(0.25,-0.25)},line width=0.8pt,color=blue,fill=zzttqq,fill opacity=0.10000000149011612]  (0,0) --  plot[domain=-1.5707963267948966:0.,variable=\t]({1.*1.5*cos(\t r)+0.*1.5*sin(\t r)},{0.*1.5*cos(\t r)+1.*1.5*sin(\t r)}) -- cycle ;
\draw[xscale=1.0] (0.9,0.5) node[above,rotate=0] {\large $\Omega$};
\draw[xscale=1.0] (-0.9,0.5) node[above,rotate=0] {\large $\Omega$};
\draw[xscale=1.0] (0.9,-0.5) node[below,rotate=0] {\large $\Omega$};
\draw[xscale=1.0] (0.0,-2.0) node[below] {\footnotesize (a) $f(\uparrow)$};
\end{tikzpicture}
\end{center}
\end{minipage}
\begin{minipage}[t]{0.5\textwidth}
\raggedright
\begin{center}
\begin{tikzpicture}[line cap=round,line join=round,x=1.0cm,y=1.0cm,scale=1.2]
\draw[-triangle 45,line width=1.0pt,smooth,blue] (-1.3,-1.3) -- (-1.1,-1.5);
\draw[-triangle 45,line width=1.0pt,smooth,blue] (-1.3,1.3) -- (-1.5,1.1);
\draw[line width=0.8pt,smooth,green,->] (-1.7,1.6) -- (-1.15,1.6);
\draw[line width=0.8pt,smooth,green,->] (-1.7,-1.6) -- (-1.15,-1.6);
\draw[line width=0.8pt,smooth,green,->] (1.6,-1.7) -- (1.6,-1.15);
\draw[color=black] (-1,-0.35) node[above] {\footnotesize (iii)};
\draw[color=black] (-1,1.7) node[above] {\footnotesize (ii)};
\draw[color=black] (1,-0.35) node[above] {\footnotesize (iv)};
\draw[-triangle 45, line width=1.0pt,smooth,blue] (1.3,-1.3) -- (1.5,-1.1);
\draw[line width=0.8pt,smooth,red] (-0.25,1.75) -- (-1.75,1.75)--(-1.75,0.25);
\draw[line width=0.8pt,xscale=1.0,smooth,red] (-1.75,-0.25) -- (-1.75,-1.75)--(-0.25,-1.75);
\draw[line width=0.8pt,xscale=1.0,smooth,red] (0.25,-1.75) -- (1.75,-1.75)--(1.75,-0.25);
\draw [shift={(-0.25,0.25)},line width=0.8pt,color=blue,fill=zzttqq,fill opacity=0.10000000149011612]  (0,0) --  plot[domain=1.5707963267948966:3.141592653589793,variable=\t]({1.*1.5*cos(\t r)+0.*1.5*sin(\t r)},{0.*1.5*cos(\t r)+1.*1.5*sin(\t r)}) -- cycle ;
\draw [shift={(-0.25,-0.25)},line width=0.8pt,color=blue,fill=zzttqq,fill opacity=0.10000000149011612]  (0,0) --  plot[domain=3.141592653589793:4.71238898038469,variable=\t]({1.*1.5*cos(\t r)+0.*1.5*sin(\t r)},{0.*1.5*cos(\t r)+1.*1.5*sin(\t r)}) -- cycle ;
\draw [shift={(0.25,-0.25)},line width=0.8pt,color=blue,fill=zzttqq,fill opacity=0.10000000149011612]  (0,0) --  plot[domain=-1.5707963267948966:0.,variable=\t]({1.*1.5*cos(\t r)+0.*1.5*sin(\t r)},{0.*1.5*cos(\t r)+1.*1.5*sin(\t r)}) -- cycle ;
\draw[xscale=1.0] (-0.9,-0.5) node[below,rotate=0] {\large $\Omega$};
\draw[xscale=1.0] (-0.9,0.5) node[above,rotate=0] {\large $\Omega$};
\draw[xscale=1.0] (0.9,-0.5) node[below,rotate=0] {\large $\Omega$};
\draw[xscale=1.0] (0.0,-2.0) node[below] {\footnotesize (b) $f(\downarrow)$};
\end{tikzpicture}
\end{center}
\end{minipage}%
\caption{This figure shows the possible options for the boundary of a convex $\Omega$ and a possible ray extension.  Part (a) is based on the assumption that $f$ is non-decreasing and Part (b) is based on the assumption that $f$ is non-increasing. Note that the missing quarter in Part (a) is due to the fact that we cannot have $f(\uparrow)$ and $F(\downarrow,\uparrow)$ at the same time. Similarly for the missing quarter of Part (b). }\label{Fig-Convex1}
\end{figure}
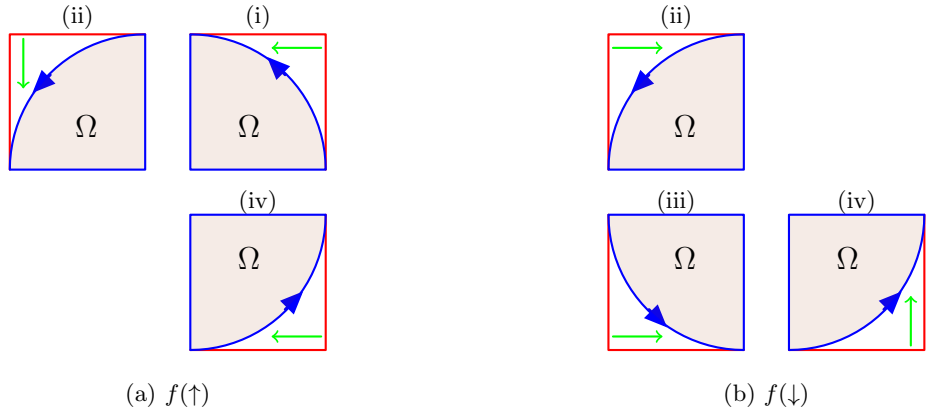
\end{center}

Extensions as in Lemma \ref{Lem-Sadok2} above are dubbed ``ray extensions" since $\widetilde{F}$ is always constant on either horizontal or vertical rays in $\Omega^c$.
Ray extensions allow us in turn to extend our map $F$ from convex $\Omega$ to a polygonal domain whose edges are horizontal or vertical, and each edge must intersect $\partial\Omega$. We call such a domain an ``origami domain". Part (b) of Fig. \ref{Fig-Origami1} illustrates such a domain containing $\Omega.$
From now on we will have to assume that the boundary of $\Omega$ is parameterized as a piecewise regular curve\footnote{A \textit{regular} curve is a curve whose derivative at every point exists and is non-zero.} ${\bf r} : [0,1]\longrightarrow\mathbb R^2$ that is counterclockwise oriented and such that the derivative of $f=F\circ{\bf r}$ has finitely many zeros, so that by Lemma \ref{Lem-Sadok1}, $F$ restricted to $\partial\Omega$ is monotonic on a finite number of pieces.

\begin{lemma}\label{Lem-Origami1}
Let $\Omega$ be a compact and convex region in the plane of which boundary is a piecewise smooth Jordan curve, and let $F:\; \Omega\rightarrow \mathbb{R}$ be a continuous mixed monotonic function on $\Omega$ as in Lemma \ref{Lem-Sadok1}. Then $F$ extends to a function $\widetilde{F}$ on an origami domain. Furthermore, this extension can be chosen so that its image coincides with that of $F$.
\end{lemma}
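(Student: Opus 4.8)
The plan is to build $\widetilde F$ by gluing together finitely many ray extensions, one for each monotone arc of $\partial\Omega$, and then to verify that the pieces are mutually compatible. First I would fix a good decomposition of the boundary: by the standing hypothesis and Lemma~\ref{Lem-Sadok1} there are parameters $0=s_0<s_1<\dots<s_N=1$ with $f=F\circ{\bf r}$ monotone on each $[s_{j-1},s_j]$, and I would refine this partition by inserting the (finitely many) parameters at which ${\bf r}$ attains the leftmost, rightmost, lowest or highest point of $\Omega$. After this refinement each arc $C_j={\bf r}([s_{j-1},s_j])$ is at once $F$-monotone and geometrically monotone, meaning $x(t)$ and $y(t)$ are each monotone (possibly constant) along $C_j$; hence every horizontal line and every vertical line meets $C_j$ in at most one point, and convexity of $\Omega$ guarantees that for a point $p$ lying just outside $\Omega$ near $C_j$ the relevant projection $(x^{\pm},y)$ or $(x,y^{\pm})$ of Lemma~\ref{Lem-Sadok2} exists and lies on $C_j$.

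Next I would ray-extend over each arc. Orienting $C_j$ counterclockwise tells us on which side $\Omega$ lies; combined with whether $F$ is of type $F(\downarrow,\uparrow)$ or $F(\uparrow,\downarrow)$ and with the sign of $f'$ on $C_j$, this selects one of the admissible horizontal/vertical projection directions tabulated in Fig.~\ref{Fig-Convex1}, and Lemma~\ref{Lem-Sadok2} (applied up to a rotation or reflection of the plane) yields a region $R_j\subset\Omega^c$ abutting $C_j$, bounded otherwise by one horizontal and one vertical segment, together with an extension $\widetilde F_j$ of $F|_{C_j}$ to $\Omega\cup R_j$ that is constant along the chosen rays. Shrinking the $R_j$ if necessary so that $R_j$ and $R_k$ are pairwise disjoint for $j\ne k$ except that consecutive ones share the single ray through their common endpoint ${\bf r}(s_j)$, the set $\mathcal O:=\Omega\cup\bigcup_{j=1}^{N}R_j$ is a polygonal region all of whose edges are horizontal or vertical and each of which meets $\partial\Omega$; that is, $\mathcal O$ is an origami domain. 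I then set $\widetilde F:=F$ on $\Omega$ and $\widetilde F:=\widetilde F_j$ on $R_j$.

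The heart of the proof — and the step I expect to cost the most bookkeeping — is checking that this $\widetilde F$ is well defined, continuous, and of the required mixed monotonicity on all of $\mathcal O$, not merely on each piece. For well-definedness and continuity: where $R_j$ and $R_{j+1}$ meet, they meet along the ray issuing from ${\bf r}(s_j)$, on which both $\widetilde F_j$ and $\widetilde F_{j+1}$ are constant with common value $F({\bf r}(s_j))$, and each $\widetilde F_j$ agrees with $F$ along the arc $C_j$ it abuts; so the definitions coincide on all overlaps and continuity of $\widetilde F$ follows from the gluing lemma. For monotonicity it suffices to treat $F(\downarrow,\uparrow)$, the case $F(\uparrow,\downarrow)$ following by applying the result to $(x,y)\mapsto F(y,x)$; writing $\widetilde F=F\circ\pi$, where $\pi:\mathcal O\to\partial\Omega$ is the identity on $\partial\Omega$ and the ray-foot projection on each $R_j$, the projection directions in Lemma~\ref{Lem-Sadok2} were chosen precisely so that $P\le_{ne}Q$ forces the relation on $\pi(P)$ and $\pi(Q)$ needed to conclude the desired inequality between $F(\pi(P))$ and $F(\pi(Q))$. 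Carrying this out reduces to comparing the finite list of seam configurations that arise from the refinement of Step~1 — a convex corner of $\Omega$, a zero of $f'$, or an extreme point of $\Omega$ — against the case table of Fig.~\ref{Fig-Convex1}; this finite verification is the genuine obstacle, everything else being assembly.

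Finally, the \emph{nice extension} assertion is immediate from the construction: every value taken by $\widetilde F$ on some $R_j$ is a value $F({\bf r}(t))$ with ${\bf r}(t)\in\partial\Omega\subset\Omega$, so $\widetilde F(\mathcal O)\subseteq F(\Omega)$, while $\Omega\subseteq\mathcal O$ gives the reverse inclusion; hence the ranges of $\widetilde F$ and $F$ coincide.
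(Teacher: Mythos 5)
Your construction is essentially the paper's own proof: decompose $\partial\Omega$ into finitely many $F$-monotone arcs via Lemma \ref{Lem-Sadok1} (refined so each arc fits one of the six configurations of Fig. \ref{Fig-Convex1}), apply the ray extension of Lemma \ref{Lem-Sadok2} box by box while going counterclockwise around the boundary, glue, and note that the extension only takes values of $F$ on $\partial\Omega$, so the range is unchanged. The only differences are cosmetic—you spell out the seam checks the paper leaves implicit—with one small correction: when consecutive arcs use different projection directions (horizontal versus vertical), the adjacent regions meet only at the common boundary point ${\bf r}(s_j)$ rather than along a shared ray, which leaves a reflex corner in the origami domain but does not affect your continuity or monotonicity argument.
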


\begin{proof}
Our curve $\partial\Omega$ is oriented counterclockwise bounding a convex region $\Omega$. It has a parameterization ${\bf r}: [0,1]\rightarrow\partial\Omega$, ${\bf r} (0)={\bf r} (1)$. For a connected portion $C_j$ of the curve $C$, we will write its restricted parameterization as ${\bf r}_j: I_j\subset [0,1]\rightarrow C_j$, where ${\bf r}_j={\bf r}_{|I_j}$.

By assumption (see Lemma \ref{Lem-Sadok1}), the curve splits into a finite number of segments $C_1,C_2,\ldots, C_n$ such that (i) these segments are either entirely horizontal or vertical, or of the six forms given in Figure \ref{Fig-Convex1}, and (ii)
each restriction $f_j := F\circ{\bf r}_j$ is monotonic (increasing or decreasing) from
$I_j=[a_j,b_j]\subset [0,1]$ into $\mathbb{R}$.

For simplicity, we say that we ``fill in" a region if we are able to extend $F$ over that region. If $C_j$ is entirely vertical or horizontal, then nothing needs to be done. Otherwise,we extend
$F$ to the right of the curve (here ``right" is with respect to the orientation which is counterclockwise) by ray extension.  The extension fills in the corresponding box depicted in Figure \ref{Fig-Convex1}. The region to be filled has horizontal top and bottom sides, left boundary $C_j$ (again ``left" w.r.t the counterclockwise orientation) and right most boundary the line segment having vertex
$({\bf r}_j(a_j)_x, {\bf r}_j(b_j)_y)$ or $({\bf r}_j(a_j)_y,{\bf r}_j(b_j)_x)$ depending on the situation
(we use the notation that if $p\in\mathbb R^2$, then $p$ has coordinates $(p_x,p_y)$).
We now extend $F$ for the six cases by taking the appropriate ray extension as described in Lemma \ref{Lem-Sadok2} and depicted in Figure \ref{Fig-Convex1}.
Going around our boundary curve $\partial\Omega$, with $\Omega$ convex, and extending $F$ to $\widetilde{F}$ on those boxes to the right of the $C_j$'s, we obtain an extension of $F$ to a larger domain that is an origami domain.
\end{proof}

The next step is to extend from the origami domain into a rectangular domain. To that end, we will define the notion of ``grafting". Let $\gamma_i: I_i\rightarrow \mathbb{R}^3$, $i=1,2$, be two curves in space. The following process explains how to ``graft" the first curve $\gamma_1$ to a point of $\gamma_2$. We write $p_0 = \gamma_1(0)$ the starting point of $\gamma_1$. For a point $p=\gamma_2(t)$ on the second curve, we
consider the new curve
$$\gamma_{p} : I_1\rightarrow \mathbb{R}^3\ \ \ ,\ \ \gamma_p(s) = \gamma_1(s) + \overrightarrow{p_0p}$$
This is the curve ${\bf r}_1$ translated along the vector $\overrightarrow{p_0p}$ so it starts at the point
$\gamma_2(t)$; i.e.
$\gamma_p(0)  = \gamma_2(t)$. We say that ``we graft $\gamma_1$ along $\gamma_2$" means that we graft $\gamma_1$ at every point of $\gamma_2$, obtaining a new replica of $\gamma_1$ starting at each point of $\gamma_2$.
For suitable choices of $\gamma_1$ and $\gamma_2$, this process yields a surface in space as illustrated in Figure \ref{Fig-Grafting}.  The operation of grafting is not commutative.
\\

The following example is easy to check and is what we use in the sequel.

\begin{example}  Let $\gamma_1$ and $\gamma_2$ be defined by
$\gamma_1(s) = (x_0, s, z(s))$
and
$\gamma_2(t) = (t, y_0, w(t))$, with $s,t\in [0,1]$.
Grafting $\gamma_1$ along $\gamma_2$ yields the parameterized surface $\nu : [0,1]^2\rightarrow \mathbb{R}^3$ defined by
$$\nu (t,s) = (t, y_0+s, z(s) + w(t) - z(0)).$$
In particular, if $z$ and $w$ are monotonous, then $\nu$ is mixed monotonous. If the surface is seen as a graph of $F$ in the variables $(x=t,y=y_0+s)$, then $F(\uparrow,\downarrow)$ if $z\uparrow$ and $w\downarrow.$
\end{example}

\begin{center}
\begin{figure}[!h]
\definecolor{zzttqq}{rgb}{0.6,0.2,0.}
\definecolor{cqcqcq}{rgb}{0.7529411764705882,0.7529411764705882,0.7529411764705882}
\begin{minipage}[t]{0.5\textwidth}
\begin{center}
\begin{tikzpicture}[line cap=round,line join=round,x=1.0cm,y=1.0cm,scale=1.0]
\fill[Maroon!20,opacity=.5] (1,2.5,0)-- (3,2.5,0) -- (3,2,2)--(1,2,2) -- (1,2.5,0);
\draw[line width=0.7pt,dotted,red] (1,0,0) -- (1,2.5,0);
\draw[line width=0.7pt,dotted,red] (1,0,2)-- (1,2,2);
\draw[line width=0.7pt,dotted,red] (3,0,2)-- (3,2,2);
\draw[line width=0.7pt,dotted,red] (3,0,0)-- (3,2.5,0);
\draw[line width=0.8pt,color=blue] (1,0,0) -- (1.,0,2)--(3,0,2)--(3,0,0)--(1,0,0);
\draw [-triangle 45,line width=0.8pt,color=blue] (0.9,2.4,-0.1) to[out=10,in=170] (2.9,2.4,-0.1);
\draw [->,shift={(0,0.1,-0.3)},line width=0.6pt,color=blue,dashed] (1.1,2,2) to[out=10,in=150] (3,2,2);
\draw [->,shift={(0,0.2,-0.6)},line width=0.6pt,color=blue,dashed] (1.1,2,2) to[out=10,in=150] (3.0,2,2);
\draw [->,shift={(0,0.3,-0.9)},line width=0.6pt,color=blue,dashed] (1.1,2,2) to[out=10,in=150] (3,2,2);
\draw [->,shift={(0,0.4,-1.2)},line width=0.6pt,color=blue,dashed] (1.1,2,2) to[out=10,in=150] (3,2,2);
\draw [->,shift={(0,0.5,-1.5)},line width=0.6pt,color=blue,dashed] (1.1,2,2) to[out=10,in=160] (3,2,2);
\draw [->,shift={(0,0.0,-0.1)},line width=0.6pt,color=blue,dashed] (1.1,2,2) to[out=10,in=160] (3,2,2);
\draw[-triangle 45,line width=0.8pt,smooth,blue] (1,0,0.5) -- (1,0,1.0);
\draw[-triangle 45,line width=0.8pt,smooth,blue] (1.5,0,0) -- (2,0,0);
\draw[-triangle 45,line width=0.8pt,smooth,blue] (1,2.5,0) -- (1,2,2.0);
\draw[-triangle 45, line width=1.0pt,color=black,dashed] (0,0,-0.5) -- (3.5,0.,-0.5) node[below right] {$y$};
\draw[-triangle 45, line width=1.0pt,color=black,dashed] (0,0,-0.5) -- (0.,3.5,-0.5) node[below right] {$z$};
\draw[-triangle 45, line width=1.0pt,color=black,dashed] (0,0,-0.5) -- (0.,0.,4.0) node[below right] {$x$};
\draw[line width=1.0pt,color=black] ((2.2,2.9,0.0) node[below] {\footnotesize $\gamma_1$};
\draw[line width=1.0pt,color=black] ((1,2.2,0.5) node[left] {\footnotesize $\gamma_2$};
\draw[line width=1.0pt,color=black] ((0,0,2.0) node[left] {\footnotesize $x_0$};
\draw[line width=1.0pt,color=black] ((1,0.4,0.3) node[right] {\footnotesize $y_0$};
\draw[line width=0.7pt,dashed,black] (1,0,2)-- (0,0,2);
\end{tikzpicture}
\end{center}
\end{minipage}
\begin{minipage}[t]{0.5\textwidth}
\begin{center}
\begin{tikzpicture}[line cap=round,line join=round,x=1.0cm,y=1.0cm,scale=1.0]
\fill[Maroon!20,opacity=.5] (1,0,1) -- (1.,0,3)--(3,0,3)--(3,0,1)--(1,0,1);
\draw[line width=0.8pt,color=blue] (1,0,1) -- (1.,0,3)--(3,0,3)--(3,0,1)--(1,0,1);
\draw[-triangle 45,line width=0.8pt,smooth,blue] (1,0,1.0) -- (1,0,3.0);
\draw[-triangle 45,line width=0.8pt,smooth,blue] (2.5,0,1) -- (3,0,1);
\draw[-triangle 45, line width=1.0pt,color=black,dashed] (0,0,0) -- (3.5,0.,0.0) node[below right] {$y$};
\draw[-triangle 45, line width=1.0pt,color=black,dashed] (0,0,0) -- (0.,3.5,0.0) node[below right] {$z$};
\draw[-triangle 45, line width=1.0pt,color=black,dashed] (0,0,0) -- (0.,0.,4.0) node[below right] {$x$};
\draw[line width=1.0pt,color=black] ((1.2,0,3.2) node[left] {\footnotesize $D$};
\draw[line width=1.0pt,color=black] ((3.0,0,1.2) node[right] {\footnotesize $B$};
\draw[line width=1.0pt,color=black] ((1.0,0,0.8) node[left] {\footnotesize $A$};
\draw[line width=1.0pt,color=black] ((3.5,0,3.2) node[left] {\footnotesize $C$};
\draw[line width=1.0pt,color=black] ((0,0,3.0) node[left] {\footnotesize $a_1$};
\draw[line width=1.0pt,color=black] ((0.8,0,-0.3) node[right] {\footnotesize $a_2$};
\draw[line width=1.0pt,color=black] ((2,0,1.35) node[right] {\footnotesize ${\bf r}_1$};
\draw[line width=1.0pt,color=black] ((1,0,2) node[left] {\footnotesize ${\bf r}_2$};
\draw[line width=0.7pt,dashed,black] (1,0,3)-- (0,0,3);
\draw[line width=0.7pt,dashed,black] (1,0,0)-- (1,0,1);
\end{tikzpicture}
\end{center}
\end{minipage}
\caption{Grafting $\gamma_1$ along $\gamma_2$}\label{Fig-Grafting}
\end{figure}
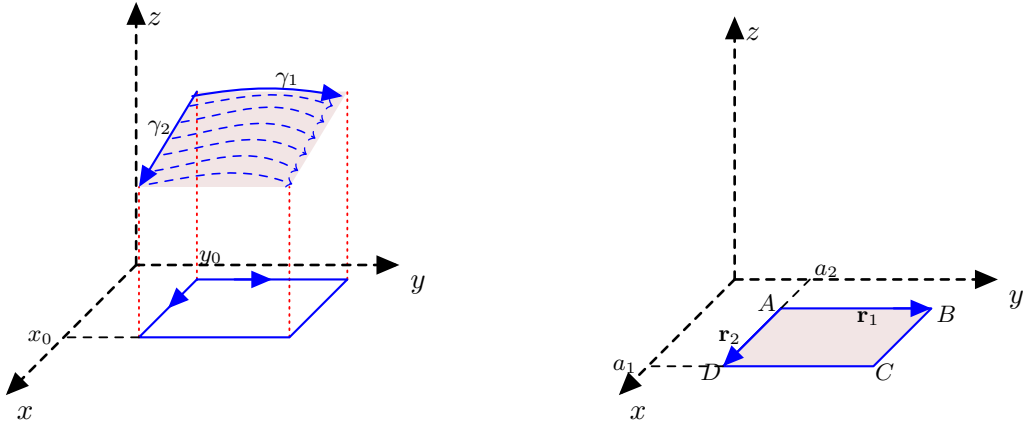
\end{center}

The above example suggests how to extend a function of two variables $F$ over a rectangular planar region, knowing $F$ on one vertical and one horizontal side. Let $A,B,C,D$ be the vertices of the rectangular region and denote $A:=(a_1,a_2).$ Parameterize $AB$ by ${\bf r}_1(s)=(a_1,\alpha (s))$ for $s\in I_1$, and parameterize $AD$ by
${\bf r}_2(t)=(\beta (t),a_2)$ for $t\in I_2$. We then obtain curves in space
\begin{equation}\label{graph}
\gamma_1(s) = (a_1, \alpha (s), F(a_1,\alpha (s))\ \ ,\ \ s\in I_1
\end{equation}
(this is the graph of $F$ over $AB$), and
$$\gamma_2(t) = (\beta (t), a_2, F(\beta (t), a_2))\ \ ,\ \ t\in I_2.$$
We now graft $\gamma_1$ along $\gamma_2$ to get the surface
$$\nu (s,t) = (\beta (t), \alpha (s), F(a_1,\alpha (s)) + F(\beta (t),a_2) -F(a_1,a_2)).$$
For a point $(x,y)=(\beta(t),\alpha (s))$ in the interior of the rectangular region,
we can set
$$\widetilde{F}(x,y) = F(a_1,y) + F(x,a_2) - F(a_1,a_2)$$
Evidently if $F(\downarrow,\uparrow)$ on $AB\cup AD$, meaning $\alpha\uparrow$ and
$\beta\downarrow$, then as well $\widetilde{F}(\downarrow,\uparrow)$.

\begin{corollary}[Filling by Grafting]\label{filling} If $F(\downarrow,\uparrow)$ on $AB\cup AD$, then $\widetilde{F}$ is a well-defined extension on the interior of $ABCD$ with $\widetilde{F}(\downarrow,\uparrow)$.
\end{corollary}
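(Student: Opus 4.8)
The plan is to verify directly that the explicit formula produced by the grafting construction above,
$$\widetilde{F}(x,y) = F(a_1,y) + F(x,a_2) - F(a_1,a_2),$$
is well-defined on the interior of $ABCD$ and satisfies the three requirements (C1)--(C3) of Definition \ref{defext}, with the stated mixed monotonicity.

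First I would address well-definedness. Since $AB$ is vertical (first coordinate identically $a_1$) and $AD$ horizontal (second coordinate identically $a_2$), and since $\alpha$ and $\beta$ are monotone parametrizations of these two sides that both start at $A$, every point $(x,y)$ in the interior of the rectangle can be written as $(\beta(t),\alpha(s))$ for a \emph{unique} pair $(s,t)$. Hence the value $\nu$ assigns over $(x,y)$ depends only on the coordinates $x$ and $y$ and not on any auxiliary choice, so $\nu$ is the graph of a genuine single-valued function $\widetilde F$; this is the only point in the argument that needs a word of justification.

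The remaining properties are routine substitutions. Continuity: $F$ is continuous on $AB\cup AD$, so $y\mapsto F(a_1,y)$ and $x\mapsto F(x,a_2)$ are continuous and $F(a_1,a_2)$ is a constant, whence $\widetilde F$ is continuous on all of $ABCD$, in particular on its interior. Agreement with $F$: on $AB$ we have $x=a_1$, so $\widetilde F(a_1,y)=F(a_1,y)+F(a_1,a_2)-F(a_1,a_2)=F(a_1,y)$; on $AD$ we have $y=a_2$, so $\widetilde F(x,a_2)=F(a_1,a_2)+F(x,a_2)-F(a_1,a_2)=F(x,a_2)$; thus $\widetilde F\equiv F$ on $AB\cup AD$. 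Monotonicity: the formula is additively separated, $\widetilde F(x,y)=h(x)+g(y)-c$ with $h(x)=F(x,a_2)$, $g(y)=F(a_1,y)$ and $c=F(a_1,a_2)$. The hypothesis $F(\downarrow,\uparrow)$ forces $h$ to be non-increasing (first-argument monotonicity of $F$, with the second slot frozen at $a_2$) and $g$ to be non-decreasing (second-argument monotonicity of $F$, with the first slot frozen at $a_1$). Consequently $\widetilde F$ is non-increasing in $x$ and non-decreasing in $y$, i.e. $\widetilde F(\downarrow,\uparrow)$.

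I do not expect any genuine obstacle here: the statement is essentially a packaging of the grafting example, and the content reduces to the observation that the separable (additive) form of $\widetilde F$ is exactly what decouples monotonicity in $x$ from monotonicity in $y$. The symmetric case $F(\uparrow,\downarrow)$ follows, as elsewhere in the paper, by passing to $\widetilde G(x,y)=\widetilde F(y,x)$.
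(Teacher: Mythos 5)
Your proof is correct and follows the same route as the paper: the corollary is just the packaging of the displayed grafting formula $\widetilde{F}(x,y)=F(a_1,y)+F(x,a_2)-F(a_1,a_2)$, whose additive separability immediately gives continuity, agreement with $F$ on $AB\cup AD$, and $\widetilde{F}(\downarrow,\uparrow)$, exactly as you verify. Your explicit checks of well-definedness and of the edge restrictions simply spell out what the paper leaves as ``evident.''
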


\begin{remark} ({\rm Reverse Grafting})
We can graft a curve $\gamma_1$ at a point $\gamma_2(t)$ by translating $\gamma_1$ so that now the \textit{end point} of $\gamma_1$ ends at $\gamma_2(t)$. We will call this process ``reverse grafting". Recall that normal grafting is about translating $\gamma_1$ so its \textit{starting point} is that of $\gamma_2(t)$. This is used as follows. Suppose now that our rectangle $ABCD$ is parameterized counterclockwise, $\gamma_1$ is the graph of $F$ over $BA$ (with parameterization from $B$ to $A$), and $\gamma_2$ the graph over $AD$. As in Corollary \ref{filling}, a filling of the rectangle can be done by grafting $\gamma_2$ along $\gamma_1$, or by reverse grafting $\gamma_1$ along $\gamma_2$.
\end{remark}

We are now in a position to prove our first extension result.

\begin{theorem}[Convex Case]\label{Th-ConvexCase}
Let $\Omega$ be a compact and convex region in the plane of whose boundary is a piecewise smooth Jordan curve, circumscribed on a rectangle $R$. Then any continuous function $F$ of mixed monotonicity on $\Omega$, with hypothesis as in Lemma \ref{Lem-Sadok1}, admits a nice extension $\widetilde{F}$ on $R$ (see Definition \ref{defext}).
\end{theorem}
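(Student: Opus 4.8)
The plan is to reduce the theorem to the two constructions already at hand, namely the origami extension of Lemma~\ref{Lem-Origami1} and the grafting of Corollary~\ref{filling}. As everywhere, it suffices to treat $F(\downarrow,\uparrow)$: the case $F(\uparrow,\downarrow)$ is recovered by reflecting coordinates and passing to $\widetilde G(x,y)=\widetilde F(y,x)$. Write $R$ for the circumscribing rectangle, so $\Omega\subseteq R$ and each side of $R$ meets $\partial\Omega$ at a point where one coordinate is extreme. First I would invoke Lemma~\ref{Lem-Sadok1} together with the standing regularity hypothesis on $f=F\circ{\bf r}$ to split $\partial\Omega$ into finitely many pieces on each of which $f$ is monotonic, refining further, if necessary, at the (at most four) boundary points where a coordinate of $\partial\Omega$ is extreme, so that every piece is horizontal, vertical, or of one of the six shapes in Figure~\ref{Fig-Convex1}. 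Lemma~\ref{Lem-Origami1} (equivalently, a ray extension as in Lemma~\ref{Lem-Sadok2} applied to each non-axis-parallel piece) then produces an extension $\widetilde F$, of the same mixed monotonicity and with the same range as $F$, defined on an origami domain $\mathcal O$ with $\Omega\subseteq\mathcal O\subseteq R$.

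Next I would use convexity to pin down $R\setminus\mathcal O$. Going once around $\partial\Omega$, the ray-extension box attached to a quadrant arc is pushed outward toward the nearest corner of $R$, and for a convex $\Omega$ the boxes belonging to the pieces in a given quadrant, together with $\Omega$ itself, cover everything in the two strips of $R$ bordering that corner \emph{except} for rectangles trapped between the boxes of two consecutive pieces. Thus $R\setminus\mathcal O$ is a finite disjoint union of axis-parallel rectangles $Q_1,\dots,Q_k$, and each $Q_\ell$ shares one horizontal edge and one vertical edge with $\partial\mathcal O$; on those two edges $\widetilde F$ is already defined, and on at least one of them $\widetilde F$ is \emph{constant}, equal to $F(M)$, where $M$ is the common endpoint of the two consecutive boundary pieces straddling $Q_\ell$ (all of the rays filling that edge emanate from the single point $M$).

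I would then fill each $Q_\ell$ by Corollary~\ref{filling} (or by reverse grafting, cf.\ the remark following Corollary~\ref{filling}), taking $A$ to be the corner of $Q_\ell$ incident to both known edges. Since one of the two boundary terms in the grafting formula $\widetilde F(x,y)=\widetilde F(a_1,y)+\widetilde F(x,a_2)-\widetilde F(a_1,a_2)$ equals the constant $\widetilde F(a_1,a_2)$, the formula collapses to a plain ray extension of the values on the other edge; hence the fill over $Q_\ell$ is continuous, inherits $\widetilde F(\downarrow,\uparrow)$ from Corollary~\ref{filling}, and adds no value outside the range of $F$. Doing this on every $Q_\ell$ extends $\widetilde F$ to all of $R$. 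Finally I would check global coherence: along every edge shared by two of the pieces $\Omega$, $\mathcal O\setminus\Omega$, $Q_1,\dots,Q_k$ the competing definitions agree (each is governed by $F$ along the same sub-arc of $\partial\Omega$) and they match at the common vertices, so $\widetilde F$ is well defined and continuous on $R$, restricts to $F$ on $\Omega$, has the same mixed monotonicity, and has range equal to that of $F$; that is, $\widetilde F$ is a nice extension in the sense of Definition~\ref{defext}.

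The step I expect to be the real work is the middle one: proving that for a \emph{convex} $\Omega$ the part of $R$ not reached by the origami domain is genuinely a finite union of axis-parallel rectangles, each carrying a constant bordering edge. It is exactly this structural feature that makes the grafting simultaneously monotonicity-preserving and range-preserving, so the geometry of the gaps between consecutive boundary pieces must be described carefully; by contrast, the monotonicity verifications across the glued edges (with respect to $\leq_{se}$ and $\leq_{ne}$) are routine, though they should not be skipped.
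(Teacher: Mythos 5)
Your first step (ray extensions via Lemma \ref{Lem-Sadok2} and Lemma \ref{Lem-Origami1} to pass from $\Omega$ to an origami domain $\mathcal O\subseteq R$) is exactly the paper's reduction, but the middle structural claim on which your range argument rests --- that $R\setminus\mathcal O$ is a disjoint union of axis-parallel rectangles each bordered by one horizontal and one vertical edge of $\partial\mathcal O$, with $\widetilde F$ \emph{constant} on at least one of these edges --- is false in general, and you correctly flagged it as the load-bearing step. It holds when each ``quarter'' of $\partial\Omega$ (say the southeast portion, where $f=F\circ{\bf r}$ is allowed to change monotonicity) splits into only two monotone pieces, because then the gap is a single rectangle bordered by the two edges onto which all rays collapse at the common endpoint $M$. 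But if that quarter splits into three or more monotone pieces $C_1,C_2,C_3$ (e.g.\ $f\downarrow,\uparrow,\downarrow$, which is perfectly compatible with $F(\downarrow,\uparrow)$ and with $f'$ having finitely many zeros), the uncovered region is a staircase polygon, not a rectangle, and its boundary along $\partial\mathcal O$ includes the \emph{far} edges of the ray-extension boxes of $C_2$ and $C_3$: on these edges $\widetilde F$ is a copy of $f$ along the arc, hence monotone but genuinely non-constant. Any rectangular decomposition of the staircase then contains a rectangle both of whose known edges carry non-constant data, so the grafting formula does not collapse to a ray extension, and plain grafting $\widetilde F(x,y)=\widetilde F(a_1,y)+\widetilde F(x,a_2)-\widetilde F(a_1,a_2)$ can leave the range of $F$ (the corner value can drop below $\min F$ or exceed $\max F$). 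At that point your proof has no mechanism to restore ``niceness.''

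This is precisely the difficulty the paper's proof is organized around: at the origami stage it classifies the corner configurations into the four cases of Fig.\ \ref{Fig-OrigamiBoundary1} with \emph{monotone} (not constant) edge data, fills cases (I) and (IV) by grafting and argues via the intermediate value theorem that the range is unchanged there, and replaces grafting in cases (II) and (III) --- where it would violate the range --- by the min/max extensions \eqref{min} and \eqref{max}, whose continuity, mixed monotonicity and range containment are then checked directly. To repair your argument you would need to drop the ``constant edge'' claim and import these devices (or something equivalent) for the rectangles whose two known edges are both non-constant; the coherence checks across successively filled rectangles also need to be run with monotone edge data rather than with constants.
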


\begin{proof}
Based on Lemma \ref{Lem-Origami1}, we start from the point where $\Omega$ is an origami domain and
we parameterize its boundary via ${\bf r}: [0,1]\rightarrow\partial\Omega$. In this case, we need to consider four cases as shown in Fig. \ref{Fig-OrigamiBoundary1} and extend $F$ over the unshaded rectangles,
going counterclockwise around the curve. Each polygonal region labeled (I) through (IV) has a horizontal edge and a vertical edge. Each edge is a portion of the parameterized boundary, positively oriented according to the arrow. We label $\gamma_{top}$ (resp. $\gamma_{bot}$) the curve $F\circ{\bf r}$ restricted to the top edge (resp. bottom edge), and
$\gamma_{ver}$ is $F\circ{\bf r}$ restricted to the vertical edge. The extension $\widetilde{F}(\downarrow,\uparrow)$ is obtained by performing the following filling: \\
$\bullet$ Region (I): grafting $\gamma_{ver}$ along $\gamma_{top}$ (or reverse grafting $\gamma_{top}$ along $\gamma_{ver}$)\\
$\bullet$ Region (II): grafting $\gamma_{top}$ along $\gamma_{ver}$\\
$\bullet$ Region (III): grafting $\gamma_{bot}$ along $\gamma_{ver}$ \\
$\bullet$ Region (IV): grafting $\gamma_{ver}$ along $\gamma_{bot}.$ \\
In both cases (I) and (IV),
the intermediate value theorem shows that grafting doesn't change the range.
This is not the case for the extensions in cases (II) and (III). If we insist on not getting out of the range of $F$,
 we need to use an alternative extension for those cases. Fortunately this is possible in those two cases. Let $\Gamma$ denote any one of these polygonal regions in Fig. \ref{Fig-OrigamiBoundary1} and $\Gamma^c$ the unshaded rectangle. For the rectangle (II), we can set
\begin{equation}\label{min}
\widetilde{F}(x,y) = \begin{cases} F(x,y),& (x,y)\in\Gamma\\
\min\left(F({x^-},y), F(x,y^+)\right),& (x,y)\in \Gamma^c.
\end{cases}\end{equation}
It is easy to check that $\widetilde{F}$ is continuous and $\widetilde{F}(\downarrow,\uparrow)$.
Similarly for rectangle (III), we use the extension
\begin{equation}\label{max}
\widetilde{F}(x,y) = \begin{cases} F(x,y),& (x,y)\in\Gamma\\
\max\left(F({x},y^-), F(x^+,y)\right),& (x,y)\in \Gamma^c.
\end{cases}\end{equation}
\begin{center}
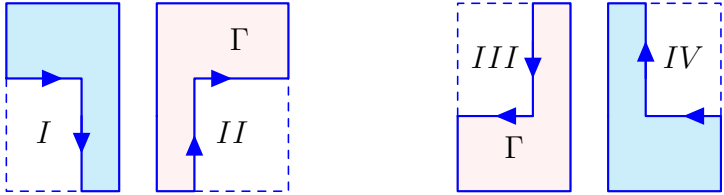
\begin{figure}[!h]
\definecolor{zzttqq}{rgb}{0.6,0.2,0.}
\definecolor{cqcqcq}{rgb}{0.7529411764705882,0.7529411764705882,0.7529411764705882}
\begin{minipage}[t]{0.9\textwidth}
\begin{center}
\begin{tikzpicture}[line cap=round,line join=round,x=1.0cm,y=1.0cm,scale=0.5]
\draw[line width=0.8pt,smooth,fill=cyan,opacity=0.2] (-10.0,1)--(-8,1)--(-8,-2)--(-7,-2)--(-7,3)--(-10,3)--(-10,1);
\draw[line width=0.8pt,smooth,blue] (-10.0,1)--(-8,1)--(-8,-2)--(-7,-2)--(-7,3)--(-10,3)--(-10,1);
\draw[line width=0.6pt,smooth,blue,dashed] (-10.0,1)--(-10,-2)--(-8,-2);
\draw[-triangle 45,line width=0.8pt,smooth,blue] (-9.0,1.0) -- (-8.5,1.0);
\draw[-triangle 45,line width=0.8pt,smooth,blue] (-8.0,0.0) -- (-8.0,-1.);
\draw[shift={(-1.0,0)},line width=0.8pt,xscale=1.0,smooth,fill=pink,opacity=0.2] (-5.0,0)--(-5,-2)--(-4,-2)--(-4,1)--(-1.5,1)--(-1.5,3)--(-5,3)--(-5,0);
\draw[shift={(-1.0,0)},line width=0.8pt,xscale=1.0,smooth,blue] (-5.0,0)--(-5,-2)--(-4,-2)--(-4,1)--(-1.5,1)--(-1.5,3)--(-5,3)--(-5,0);
\draw[line width=0.6pt,smooth,blue,dashed] (-5.0,-2)--(-2.5,-2)--(-2.5,1);
\draw[-triangle 45,line width=0.8pt,smooth,blue] (-5.0,-1.0) -- (-5.0,-0.5);
\draw[-triangle 45,line width=0.8pt,smooth,blue] (-4.5,1.0) -- (-4.0,1.0);
\draw[line width=0.8pt,smooth,fill=pink,opacity=0.2] (2.0,0)--(4,0)--(4,3)--(5,3)--(5,-2)--(2,-2)--(2,0);
\draw[line width=0.8pt,smooth,blue] (2.0,0)--(4,0)--(4,3)--(5,3)--(5,-2)--(2,-2)--(2,0);
\draw[-triangle 45,line width=0.8pt,smooth,blue] (4.0,0.0) -- (3.0,0.);
\draw[-triangle 45,line width=0.8pt,smooth,blue] (4.0,2.0) -- (4.0,1.);
\draw[line width=0.6pt,smooth,blue,dashed] (2.0,0)--(2,3)--(4,3);
\draw[line width=0.8pt,smooth,fill=cyan,opacity=0.2] (6.0,0)--(6,3)--(7,3)--(7,0)--(9,0)--(9,-2)--(6,-2)--(6,0);
\draw[line width=0.8pt,smooth,blue] (6.0,0)--(6,3)--(7,3)--(7,0)--(9,0)--(9,-2)--(6,-2)--(6,0);
\draw[-triangle 45,line width=0.8pt,smooth,blue] (8.5,0.0) -- (8.0,0.);
\draw[-triangle 45,line width=0.8pt,smooth,blue] (7.0,1.0) -- (7.0,2.);
\draw[line width=0.6pt,smooth,blue,dashed] (9.0,0)--(9,3)--(7,3);
\draw[xscale=1.0] (-9,-1) node[above,rotate=0] {\large $I$};
\draw[xscale=1.0] (-4,-1) node[above,rotate=0] {\large $II$};
\draw[xscale=1.0] (-3.8,1.5) node[above,rotate=0] {\large $\Gamma$};
\draw[xscale=1.0] (3,1) node[above,rotate=0] {\large $III$};
\draw[xscale=1.0] (3.5,-1.4) node[above,rotate=0] {\large $\Gamma$};
\draw[xscale=1.0] (8,1) node[above,rotate=0] {\large $IV$};
\end{tikzpicture}
\end{center}
\end{minipage}%
\caption{The various cases for the boundary pieces of an origami domain.}\label{Fig-OrigamiBoundary1}
\end{figure}
\end{center}
These are well-defined extensions, since on the boundary of the curve they coincide with $F$,
and their range stays within the range of $f$.
This concludes the proof and the existence of an extension for convex domains.
\end{proof}

\subsection{Extending $F$ from a semi-convex domain to a rectangular domain} We believe that Theorem \ref{Th-ConvexCase} is true
for all domains $\Omega$, parameterized by ${\bf r}$, so that the derivative of $f=F\circ{\bf r}$ has finitely many zeroes on its domain. This means that the extension to a rectangular region containing $\Omega$ is possible on such domains. We do not verify this claim to this fullest extent, but we restrict ourselves to generalizing the theorem to semi-convex domains. This covers a much wider family of domains $\Omega$ other than the convex ones.
A planar region $\Omega$ with boundary $C=\partial\Omega$ a simple curve, is called semi-convex if a ray emanating from $p\in C$ into $\Omega^c$, which is either horizontal or vertical doesn't intersect the curve again. Part (a) of Fig. \ref{Fig-SemiConvex} illustrates one such domain.

\begin{figure}[!h]
\begin{center}
\begin{tikzpicture}[line cap=round,line join=round,scale=1.2]
\draw [shift={(-5,2)},line width=1.0pt,color=blue]  plot[domain=-1.4:1.45,variable=\t]({\t},{-0.2*\t*\t+1.8});
\draw [shift={(-5,2)},line width=1.0pt,color=blue]  plot[domain=0.0:1.37,variable=\t]({0.5*\t*\t+0.5},{\t});
\draw [shift={(-5,2)},line width=1.0pt,color=blue]  plot[domain=-0.8:0.0,variable=\t]({1.4*\t*\t+0.5},{\t});
\draw [shift={(-5,2)},line width=1.0pt,color=blue]  plot[domain=-1.4:1.4,variable=\t]({-0.2*\t*\t-1},{\t});
\draw [shift={(-5,2)},line width=1.0pt,color=blue]  plot[domain=-1.4:1.4,variable=\t]({\t},{-0.2*\t*\t-1});
\draw[shift={(-5,2)},line width=1.0pt,blue] (1.4,-1.4)--(1.4,-0.8);
\draw [name path=G1,shift={(3,2)},line width=0.8pt,color=blue]  plot[domain=-1.6:1.6,variable=\t]({0.2*\t*\t+1},{\t});
\draw[shift={(-5,0.5)}] (0.0,0) node[below,rotate=0] {(a)};
\draw[shift={(4.5,0.5)}] (0,0) node[below,rotate=0] {(d)};
\draw[shift={(-5,2.2)}] (0,0) node[below,rotate=0] {\Large $\Omega$};
\draw[-triangle 45,shift={(3,2)},line width=0.8pt,smooth,blue] plot[domain=0.7:1.1,variable=\t]({0.2*\t*\t+1},{\t});
\draw[-triangle 45,shift={(3,2)},line width=0.8pt,smooth,blue] plot[domain=-1.1:-0.7,variable=\t]({0.2*\t*\t+1},{\t});
\fill[line width=0.5pt,shift={(-5,2)},Maroon!20,opacity=.5] (1.38,-1.38) -- (1.38,-0.8) -- (1.13,-0.7) -- (0.7,-0.4) -- (0.5,-0.1) -- (0.5,-0.0) -- (0.52,0.2) -- (0.62,0.5) -- (0.82,0.8) -- (1.16,1.15) -- (1.22,1.2) -- (1.28,1.25) -- (1.35,1.3) -- (1.45,1.4) -- (1.3,1.46) -- (1,1.6)--(0.80,1.67) -- (0.61,1.72) -- (0.4,1.77) -- (0.2,1.79) -- (0.,1.8) -- (-0.2,1.79) -- (-0.40,1.76) -- (-0.60,1.73) -- (-0.8,1.7) -- (-1.,1.6) -- (-1.2,1.5) -- (-1.38,1.4) -- (-1.29,1.20) -- (-1.2,1.) -- (-1.13,0.81) -- (-1.074,0.61) -- (-1.03,0.4) -- (-1.01,0.2) -- (-1.,0.) -- (-1.01,-0.2) -- (-1.04,-0.42) -- (-1.09,-0.67) -- (-1.15,-0.88) -- (-1.24,-1.103) -- (-1.38,-1.38) -- (-1.19,-1.29) -- (-1.,-1.2) -- (-0.8,-1.13) -- (-0.6,-1.07) -- (-0.4,-1.03) -- (-0.2,-1.01) -- (0.,-1.) -- (0.2,-1.01) -- (0.4,-1.03) -- (0.59,-1.07) -- (0.8,-1.13) -- (1.,-1.2) -- (1.20,-1.29) -- cycle;
\draw[shift={(-1.2,2.8)}] (0,0) node[below,rotate=0] { $f(\downarrow)$};
\draw[shift={(-1.5,1.7)}] (0,0) node[below,rotate=0] { $f(\uparrow)$};
\draw [shift={(-3,2)},line width=0.8pt,color=blue]  plot[domain=-1.4:1.4,variable=\t]({0.2*\t*\t+1},{\t});%
\draw[-triangle 45,shift={(-3,2)},line width=0.8pt,smooth,blue] plot[domain=0.7:1.1,variable=\t]({0.2*\t*\t+1},{\t});%
\draw[-triangle 45,shift={(-3,2)},line width=0.8pt,smooth,blue] plot[domain=-1.1:-0.7,variable=\t]({0.2*\t*\t+1},{\t});%
\draw[shift={(-2,2)},line width=0.6pt,red,dashed] (1,0)--(0,0);
\draw[-triangle 45,shift={(-2,2)},line width=0.6pt,red] (0.4,0.2)--(0.4,1);
\draw[-triangle 45,shift={(-1.5,1)},line width=0.6pt,red] (0.7,0)--(0,0);
\draw[shift={(-1.4,0.5)}] (0,0) node[below,rotate=0] {(b)};
\draw[shift={(1.8,2.8)}] (0,0) node[below,rotate=0] { $f(\downarrow)$};
\draw[shift={(1.8,1.7)}] (0,0) node[below,rotate=0] { $f(\uparrow)$};
\draw [shift={(0,2)},line width=0.8pt,color=blue]  plot[domain=-1.4:1.4,variable=\t]({0.2*\t*\t+1},{\t});%
\draw[-triangle 45,shift={(0,2)},line width=0.8pt,smooth,blue] plot[domain=0.7:1.1,variable=\t]({0.2*\t*\t+1},{\t});%
\draw[-triangle 45,shift={(0,2)},line width=0.8pt,smooth,blue] plot[domain=-1.1:-0.7,variable=\t]({0.2*\t*\t+1},{\t});%
\draw[shift={(1,2)},line width=0.6pt,red,dashed] (1,0)--(0,0);
\draw[-triangle 45,shift={(1,2)},line width=0.6pt,red] (0.4,0.2)--(0.4,1);
\draw[-triangle 45,shift={(1.4,1)},line width=0.6pt,red] (0,0.5)--(0,-0.3);
\draw[shift={(1.5,0.5)}] (0,0) node[below,rotate=0] {(c)};
\draw[shift={(4,2)},line width=0.6pt,red] (0.5,-1.6)--(0.5,0)--(0,0)--(0.5,0.5)--(0.5,1.6);
\fill[Maroon!20,opacity=.5,shift={(4,2)},] (0.5,-1.6) -- (0.5,0)--(0,0)--(0.01,-0.2)--(0.1,-0.8)--(0.2,-1)--(0.5,-1.6);
\fill[Maroon!20,opacity=.5,shift={(4,2)},] (0.5,1.6) -- (0.5,0.5)--(0,0)--(0.01,0.2)--(0.1,0.8)--(0.2,1)--(0.5,1.6);
\end{tikzpicture}
\end{center}
\caption{A semi-convex domain is shown in Part (a). Next is shown a piece $C_1\cup C_2$ of the boundary of $\Omega$ oriented positively, with $f(t)=F(r(t))$ changing monotonicity from $C_1$ to $C_2$. Parts (b) and (c) show
two different ray extensions, while (d) shows the adjustment to make to these extensions, before filling the sector. As explained in the text, (b) is not a valid extension if monotonicity is to be preserved in the extension.}\label{Fig-SemiConvex}
\end{figure}
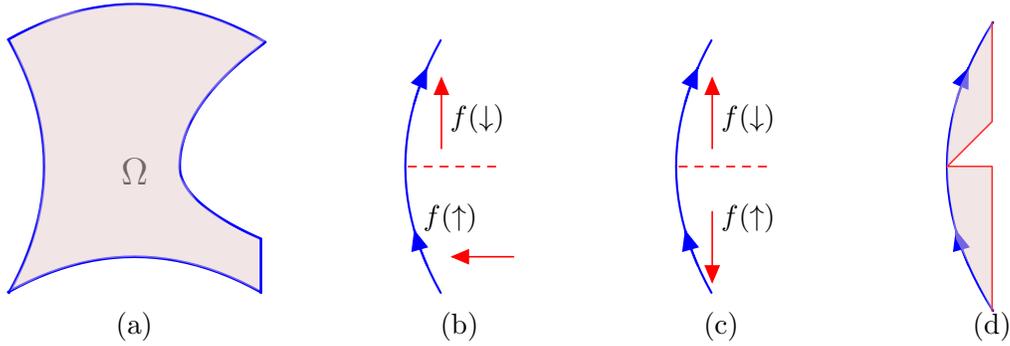

\begin{theorem}[Semi-Convex Case]
Suppose $\Omega$ is semi-convex and $F(\downarrow, \uparrow)$ with hypothesis as in Lemma \ref{Lem-Sadok1}. Then $F$ has a nice extension to $\widetilde{F}(\downarrow, \uparrow)$ on a larger rectangular domain containing $\Omega$.
\end{theorem}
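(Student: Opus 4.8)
The plan is to follow the same three-stage strategy used in the convex case --- decompose $\partial\Omega$ by Lemma~\ref{Lem-Sadok1}, perform ray extensions (Lemma~\ref{Lem-Sadok2}) to reach an origami domain, then graft to a rectangle (as in the proof of Theorem~\ref{Th-ConvexCase}) --- inserting one extra step to handle the places where $\Omega$ fails to be convex. First I would apply Lemma~\ref{Lem-Sadok1} to cut $\partial\Omega$ into finitely many arcs $C_1,\dots,C_n$, meeting at points $p_k=\mathbf r(t_k)$, on each of which $f=F\circ\mathbf r$ is monotonic and each of which is entirely horizontal or vertical or of one of the six types in Figure~\ref{Fig-Convex1}. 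On each non-degenerate arc $C_j$ I would perform the corresponding ray extension of Lemma~\ref{Lem-Sadok2}; semi-convexity is precisely the hypothesis guaranteeing that the horizontal and vertical rays used in that extension leave $\Omega$ and never return to $\partial\Omega$, so each ray extension is well defined on the strip bordering $C_j$ on the right.

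The new ingredient is the treatment of a junction $p_k$ at which $f$ changes monotonicity, say from $f(\downarrow)$ on $C_k$ to $f(\uparrow)$ on $C_{k+1}$ (the opposite parity is symmetric). Here the ray extension off $C_k$ and the ray extension off $C_{k+1}$ are each constrained, by the requirement that $\widetilde F(\downarrow,\uparrow)$ be preserved, to a specific direction: as Figure~\ref{Fig-SemiConvex}(b)--(c) shows, one of the two naive choices of ray direction is incompatible with mixed monotonicity and must be discarded, and the admissible choice is that of (c). With the admissible directions fixed, the two extended strips do not quite tile the region beyond $p_k$; what remains is a ``sector'' adjacent to $p_k$. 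Following Figure~\ref{Fig-SemiConvex}(d), I would first adjust the two ray extensions by inserting a short vertical segment emanating from $p_k$ --- a ray extension of the degenerate constant piece $\{p_k\}$ --- which reconciles the two strips along a common edge, and then observe that the leftover sector is bounded on its ``left'' by a monotone arc of $\partial\Omega$ and on its ``bottom/top'' by a horizontal or vertical segment on which $\widetilde F$ is already defined with compatible mixed monotonicity. That is exactly the data Corollary~\ref{filling} consumes, so grafting fills the sector, continuously and with $\widetilde F(\downarrow,\uparrow)$; where grafting would enlarge the range I would instead use the $\min$/$\max$ variants \eqref{min}--\eqref{max} from the proof of Theorem~\ref{Th-ConvexCase}, and the intermediate value theorem shows that in the remaining configurations grafting leaves the range unchanged, so the extension stays nice.

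Carrying this out at every monotonicity-change junction, and a plain ray extension at every other arc, converts $\Omega$ into an origami domain over which $F$ has been extended. From there the argument is finished by the origami-to-rectangle step already established within the proof of Theorem~\ref{Th-ConvexCase}: parameterize the boundary of the origami domain counterclockwise, run through the four boundary configurations (I)--(IV) of Figure~\ref{Fig-OrigamiBoundary1}, graft $\gamma_{ver}$, $\gamma_{top}$, $\gamma_{bot}$ as prescribed there (using \eqref{min} for region (II) and \eqref{max} for region (III) to control the range), and obtain a nice extension $\widetilde F(\downarrow,\uparrow)$ on a rectangle containing $\Omega$.

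I expect the main obstacle to be the sector step: checking that, after the adjustment in Figure~\ref{Fig-SemiConvex}(d), the two ray extensions and the grafted sector agree on their overlaps, so that $\widetilde F$ is genuinely well defined and continuous across all the new seams, and that the type $\widetilde F(\downarrow,\uparrow)$ is not violated along them --- this is the point where one must verify that the discarded option (b) is the only failure mode and that semi-convexity, rather than mere connectedness, is what prevents rays from re-entering $\Omega$ and producing ambiguous values. The remaining verifications are either direct appeals to the convex-case machinery or routine continuity and monotonicity checks.
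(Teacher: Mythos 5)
Your overall route---decompose $\partial\Omega$ via Lemma \ref{Lem-Sadok1}, ray-extend to an origami domain, leave sectors at the junctions where $f=F\circ{\bf r}$ changes monotonicity, then pass to a rectangle---is the paper's route, and you correctly flag that the ray directions must be chosen compatibly (the situation of Fig. \ref{Fig-SemiConvex}, choosing (c) over (b)). The genuine gap is in the sector-filling step: you assert the sector presents ``exactly the data Corollary \ref{filling} consumes'' and can always be filled by grafting, with \eqref{min}--\eqref{max} invoked only to control the range. That is not correct. Corollary \ref{filling} fills a rectangle from data on one vertical and one horizontal side; a sector has an edge that is oblique or curved, and grafting graphs over non--axis-parallel edges does not automatically preserve the mixed monotonicity: the grafted value is $f_1(t)+f_2(s)-f(p_0)$, and fixing $x$ while increasing $y$ moves \emph{both} parameters, so the sign of the $y$-variation depends on a relation between $f_1'$, $f_2'$ and the slope of the oblique edge. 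The paper therefore introduces a device that is absent from your proposal: a fiberwise linear interpolation $\widetilde F(x,y)=\bigl(1-\tfrac{y-y^-}{y^+-y^-}\bigr)F(x,y^-)+\tfrac{y-y^-}{y^+-y^-}F(x,y^+)$ between the two edges, used precisely in the sub-cases (Fig. \ref{Fig-Sectors} (a), (c), and (d) with $f_2\downarrow$) where grafting is not shown to give $\widetilde F(\downarrow,\uparrow)$, while grafting is reserved for the sub-case ((d) with $f_2\uparrow$) where the linear interpolation fails to preserve the monotone type. Since the obstruction here is monotonicity, not range, the $\min$/$\max$ patches cannot rescue an all-grafting sector fill.

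A second omission: after the sectors are filled you claim the convex-case origami-to-rectangle step (regions (I)--(IV) of Fig. \ref{Fig-OrigamiBoundary1}) finishes the argument. For semi-convex $\Omega$ the resulting origami domain can force you to fill a region bounded on \emph{three} sides by already-extended data (Fig. \ref{Fig-Redundancy}), a configuration that never arises in the convex case. The paper handles it in two stages: first a partial extension $\widetilde F_1$ by \eqref{min} or \eqref{max}, chosen so that $\widetilde F_1(Q)\geq F(P)$ for $Q$ vertically above $P$, and then a further sector fill, with the additional complication that $\widetilde F_1$ restricted to the new edge need not be monotonic and must be cut into finitely many monotone pieces before ray, linear or grafting extensions apply. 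Your proposal needs this step (or a substitute for it) to be complete, and the check that the seams introduced there stay continuous and of type $(\downarrow,\uparrow)$ is exactly where the remaining work lies.
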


\begin{proof}
Recall that we are writing our proofs based on mixed monotonicity of the form $F(\downarrow, \uparrow)$.
By hypothesis $f:= F\circ{\bf r}$, where ${\bf r}$ is a parametrization of $C$, is monotonic on a finite number of arcs $C_1,\ldots, C_n$, $C=C_1\cup\cdots\cup C_n$ (see Lemma \ref{Lem-Sadok1}).
We assume the arcs are parameterized positively (that is the region $\Omega$ is always to the lefthand side
of the curve as we travel along increasing $t$), and that when they intersect, they do so at an endpoint.
We can proceed as in the convex case, extending $F$ first on an origami domain.
To get to the origami domain, we extend $F$ on a box containing $C_i$ for all $i$ by performing either a horizontal or a vertical ray extension. Here however we might run into the following problems illustrated by Parts (b)
and (c) of Fig.  \ref{Fig-SemiConvex}.

Indeed, suppose $C_1,C_2$ are consecutive segments of $C$, $f_1=F\circ{\bf r}_1$ is $\uparrow$ while
$f_2=F\circ{\bf r}_2$ is $\downarrow$, and rule out at this stage linear segments perpendicular to each other
(we refer to Fig.  \ref{Fig-SemiConvex}).
We can then extend to the right
of the indicated boxes as in Lemma \ref{Lem-Sadok2} by performing the ray extension shown by the projection arrows. When this is done, we might obtain a discontinuity along the ray emanating from $p=C_1\cap C_2$.
Along that ray, $\widetilde{F}_1$ and $\widetilde{F}_2$ the corresponding extensions for $C_1$ and $C_2$ do not necessarily agree. To go around this problem, we extend as before but leave a sector as shown in Part (d) of Fig. \ref{Fig-SemiConvex}. This extension is well-defined and continuous, and so now we have to extend over the sector.
A sector has two parameterized edges: the first edge $E_1$ and the second edge $E_2$, where first and second are with respect to increasing $t$ in the parametrization. The second issue to pay attention to is that the value
of $\widetilde{F_1}$ at a point of $E_1$ might not be less than the value of $\widetilde{F_2}$ at the point immediately above it in $E_2$.
No sector filling will ensure that $\tilde F(\downarrow,\uparrow)$, and so one has to choose the ray extensions carefully from the start to avoid this issue. This can be done simply by choosing the right combination of ray
extensions. For example, in
Fig. \ref{Fig-SemiConvex}, extension (c) is a valid extension (in the sense that $\widetilde{F_1}(P)\leq
\widetilde{F_2}(Q)$ if $Q$ is immediately above $P$), but (b) is not necessarily.

We can therefore assume we have extended $F$ as far as Part (d) of Fig. \ref{Fig-SemiConvex},
with $F(\downarrow, \uparrow)$, and now
we wish to extend over the sector. There are cases to consider, illustrated in
Fig. \ref{Fig-Sectors}, which cover the various allowable combinations of $f_1\uparrow\downarrow$ on $E_1$ and $f_2\uparrow\downarrow$ on $E_2$ that occur.
We treat in details the case of Fig. \ref{Fig-Sectors},(d), the other three being similar.

Suppose we have a sector as in Part (d) of Fig. \ref{Fig-Sectors}.  Note that the function $f_1=F\circ {\bf r}_1$
must be increasing $\uparrow$ on $E_1$. If $f_2$ is decreasing, then we can proceed as follows.
Let $(x,y)$ be a point in this sector, $(x,y^-)$ its vertical projection in $E_1$ and
$(x,y^+)$ its vertical projection in $E_2$. We can parameterize the segment
$[(x,y^-), (x,y^+)]$ by arc-length so that a point there is of the form
$$(x,y) = \left(1-{y-y^-\over y^+-y^-}\right)(x,y^-) + {y-y^-\over y^+-y^-}(x,y^+)$$
and consequently we can define a linear extension of $F$ as follows
$$\widetilde F(x,y) = \left(1-{y-y^-\over y^+-y^-}\right)F(x,y^-) + {y-y^-\over y^+-y^-}F(x,y^+).$$
Such an extension is continuous by construction, and we can verify that $\widetilde F(\downarrow,\uparrow);$ further, the range of $\widetilde F$ is within the range of $F$.

Suppose now that $f_2\uparrow$ (still dealing with Figure \ref{Fig-Sectors} (d)). The linear extension above doesn't yield necessarily the same mixed monotonicity extension.
Instead and in this case, we can graft $\gamma_2$ along $\gamma_1$ where as is now understood $\gamma_i$ is the restriction of $F$ over parameterized $E_i$ (see Eq. \eqref{graph} and Fig. \ref{Fig-Grafting}). In this case as well, the range of $\widetilde F$ stays within the range of $F$.

The other extensions in cases (a),(b) and (c) work the same. Note that for (a), $f_1$ must be decreasing and $f_2$ is increasing, because of the nature of $F(\downarrow,\uparrow)$. Here linear extension does the job. Same for (c).

Finally, by carrying out these extensions over sectors we end up extending $F$ over a domain which is again an Origami domain, meaning all edges are either vertical or horizontal. By trying to extend this new $F$ to a rectangular domain, as we've done for the convex case, we run into the novel situation where we have to fill a region surrounded by three sides as in Part (a) of Fig. \ref{Fig-Redundancy}. As indicated in that figure, we
extend first to $\widetilde{F}_1$ on the rectangle minus a sector,
then fill-in for the remaining sector. Remember that in the first step,
we need to ensure that $\widetilde{F}_1(Q)\geq F(P)$ for
$P\in E_1$ and $Q$ immediately above it in segment $E_2$. This is possible using either Eq. \eqref{min} or Eq. \eqref{max} depending on the situation. For example in Fig. \ref{Fig-Redundancy}, the restriction $f$ of
$F$ to the left edge is necessarily increasing and its restriction to the top edge is necessarily decreasing.
In this case, we can choose
$\widetilde{F}_1(x,y) = \min (F(x^-,y), F(x,y^+))$. Notice that if $Q=(x,y)$ as shown in that figure,
then $P(x,y^-)$ and
\begin{eqnarray*}
\widetilde{F}_1(Q) &=& \min (F(x^-,y), F(x,y^+))\\
&\geq& \min (F(x^-,y), F(x,y^-)) \ \ \hbox{since}\
F(x,y^-)\leq F(x,y^+)\ \hbox{by hypothesis}\\
&\geq & F(x,y^-)=F(P)\ \ \hbox{since}\ f\uparrow\ \hbox{going from $P$ to $(x^-,y)$}
\end{eqnarray*}
Once $\widetilde{F}_1$ is constructed, one extends over the remaining sector as done previously.
There is one point however that needs extra care: the extension $\widetilde{F}_1$, when restricted
to the parameterized edge containing $Q$ in Fig. \ref{Fig-Redundancy}, is not necessarily monotonous.
When it is monotonous, then we proceed indeed as previous (linear extension
or grafting). Generally however, that parameterized edge
could break into a finite number of segments on which the restriction of $\widetilde{F}_1$ is increasing or decreasing. In that case, one has to combine some suitable choices of ray extensions, linear extensions
and grafting to get the desired extension over the sector. We skip writing the details as they are now familiar but tedious.
\end{proof}

\begin{figure}[!h]
\begin{center}
\begin{tikzpicture}[line cap=round,line join=round,scale=0.9]
\draw[line width=0.8pt,blue] (-7,2)--(-5,2)--(-5,4)--(-6,4)--(-6,2.5)--(-7.0,4)--(-7,2);
\draw[line width=0.8pt,fill=pink,opacity=0.2] (-7,2)--(-5,2)--(-5,4)--(-6,4)--(-6,2.5)--(-7.0,4)--(-7,2);
\draw[line width=0.8pt,blue,shift={(0,0)}] (-4,2)--(-2,2)--(-2,4)--(-4,4)--(-4,3)--(-3.0,3)--(-4,2);
\draw[line width=0.8pt,fill=pink,opacity=0.2,shift={(0,0)}] (-4,2)--(-2,2)--(-2,4)--(-4,4)--(-4,3)--(-3.0,3)--(-4,2);
\draw[line width=0.8pt,blue,shift={(0,0)}] (2,2)--(4,2)--(4,3.0)--(3,3)--(4,4)--(2,4)--(2,2);
\draw[line width=0.8pt,fill=pink,opacity=0.2,shift={(0,0)}] (2,2)--(4,2)--(4,3.0)--(3,3)--(4,4)--(2,4)--(2,2);
\draw[line width=0.8pt,blue,shift={(0,0)}] (-1,2)--(0,2)--(0,3)--(1,2)--(1,4)--(-1,4)--(-1,2);
\draw[line width=0.8pt,fill=pink,opacity=0.2,shift={(0,0)}] (-1,2)--(0,2)--(0,3)--(1,2)--(1,4)--(-1,4)--(-1,2);
\draw[shift={(3,0)}] (-9.0,2) node[below,rotate=0] {(a)};
\draw[shift={(3,0)}] (-6.0,2) node[below,rotate=0] {(b)};
\draw[shift={(9,0)}] (-6.0,2) node[below,rotate=0] {(d)};
\draw[shift={(6,0)}] (-6.0,2) node[below,rotate=0] {(c)};
\draw[-triangle 45,shift={(3,0)},line width=0.8pt,smooth,blue] (-8,4.0) -- (-9,4.0);
\draw[-triangle 45,shift={(3,0)},line width=0.8pt,smooth,blue] (-5,4.0) -- (-6,4.0);
\draw[-triangle 45,shift={(9,0)},line width=0.8pt,smooth,blue] (-5,4.0) -- (-6,4.0);
\draw[-triangle 45,shift={(6,0)},line width=0.8pt,smooth,blue] (-5,4.0) -- (-6,4.0);
\end{tikzpicture}
\end{center}
\caption{Extending from semi-convex domain to the origami domain reduces to extending over sectors, with cases labeled (a) through (d)} \label{Fig-Sectors}
\end{figure}
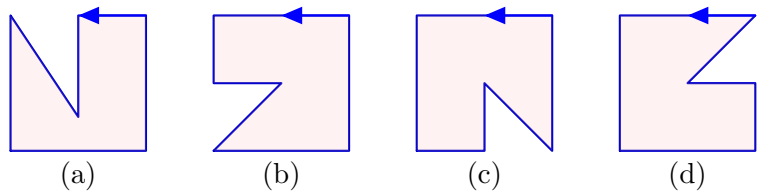

 \begin{figure}[htb]
 \begin{center}
\begin{tikzpicture}[line cap=round,line join=round,scale=0.6]
\draw[line width=0.8pt,blue] (-7,2)--(-3,2)--(-3,3)--(-5,3)--(-5,6)--(-3.0,6)--(-3,7)--(-7,7)--(-7,2);
\draw[line width=0.8pt,fill=pink,opacity=0.2] (-7,2)--(-3,2)--(-3,3)--(-5,3)--(-5,6)--(-3.0,6)--(-3,7)--(-7,7)--(-7,2);
\draw[-triangle 45,shift={(0,0)},line width=0.8pt,smooth,blue] (-3,3.0) -- (-4,3.0);
\draw[-triangle 45,shift={(0,0)},line width=0.8pt,smooth,blue] (-5,4.0) -- (-5,5.0);
\draw[-triangle 45,shift={(0,0)},line width=0.8pt,smooth,blue] (-5,6.0) -- (-4,6.0);
\draw[line width=0.8pt,blue,shift={(8,0)}] (-7,2)--(-3,2)--(-3,3)--(-5,3)--(-3.0,6)--(-3,7)--(-7,7)--(-7,2);
\draw[line width=0.8pt,fill=pink,opacity=0.2,shift={(8,0)}] (-7,2)--(-3,2)--(-3,3)--(-5,3)--(-5,6)--(-3.0,6)--(-3,7)--(-7,7)--(-7,2);
\draw[line width=0.8pt,fill=pink,opacity=0.2,shift={(8,0)}] (-5,3)--(-5,6)--(-3,6)--(-5,3);
\draw[-triangle 45,shift={(8,0)},line width=0.8pt,smooth,blue] (-3,3.0) -- (-4,3.0);
\draw[-triangle 45,shift={(8,0)},line width=0.8pt,smooth,blue] (-5,3.0) -- (-4,4.5);
\draw[shift={(4,0)}] (-9.0,2) node[below,rotate=0] {(a)};
\draw[shift={(3,3)}] (-9.0,2) node[below,rotate=0] {$F$};
\draw[shift={(9,0)}] (-6.0,2) node[below,rotate=0] {(b)};
\draw[shift={(9.5,4)}] (-6.0,2) node[below,rotate=0] {$\widetilde{F}_1$};
\draw[shift={(4,1)}] (0.0,2.1) node[below,rotate=0] {\tiny $P$};
\draw[shift={(4,3)}] (0.15,1.7) node[below,rotate=0] {\tiny $Q$};
\draw[shift={(4,1)},line width=0.7pt,dashed,black] (0,2)-- (0,3);
\end{tikzpicture}
\end{center}
\caption{Filling a rectangle where $F$ is known on three sides. Extending partially by $\widetilde{F}_1$
as shown in (b) then extending over the resulting sector.}\label{Fig-Redundancy}
\end{figure}
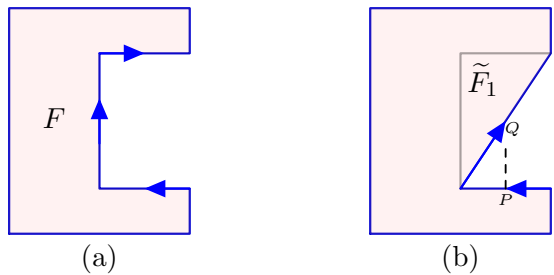


\section{Posets and the embedding technique}\label{embeddingsection}


The embedding technique can be used to study the asymptotic behaviour of the orbits in Eq. (\ref{Eq-DE1}) whenever $F $ is of mixed monotonicity and assumes an invariant box. In its simplest form, this approach  can be summarized as follows \cite{Go-Ha1994,Sm2006}.

\begin{proposition}\label{Pr-embed}
Suppose  $F: \Omega:=[a,b]^2\rightarrow [a,b]$ and $F(\uparrow,\downarrow).$ Define
the symmetric map $G(x,y)= (F(x,y),F(y,x)).$ Then each orbit of $ X_{n+1}=G(X_n)$ must converge to a fixed point of $G.$ In particular, if $G$  has a unique fixed point $(x^*,x^*),$ then it must be a global attractor of $ X_{n+1}=G(X_n).$
\end{proposition}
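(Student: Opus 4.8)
The plan is to use the two ingredients already in hand: the observation recalled above that $F(\uparrow,\downarrow)$ makes $G$ non-decreasing for the southeast order $\leq_{se}$, and the fact that $([a,b]^2,\leq_{se})$ is a compact complete lattice whose least element is $(a,b)$ and whose greatest element is $(b,a)$, and in which every $\leq_{se}$-monotone bounded sequence converges (in the first coordinate it is monotone and bounded in $[a,b]$, and in the second coordinate likewise, so the sequence converges in $\mathbb{R}^2$). Thus $G$ is a continuous monotone self-map of this lattice, and I would run the scheme of monotone iteration from the extreme elements.

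First I would produce the two extremal orbits. Iterating $G$ from the bottom element $m_0:=(a,b)$: since $m_0$ is least and $G$ maps into $[a,b]^2$, we have $m_0\leq_{se}G(m_0)$, and applying $G$ repeatedly together with monotonicity shows that $m_k:=G^k(m_0)$ is $\leq_{se}$-non-decreasing and bounded above by $(b,a)$; hence it converges to some $\underline X$, and continuity of $G$ gives $G(\underline X)=\underline X$. Symmetrically, $M_k:=G^k(b,a)$ is $\leq_{se}$-non-increasing and converges to a fixed point $\overline X$, with $\underline X\leq_{se}\overline X$.

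Next I would carry out the squeeze. Any initial state $X_0\in[a,b]^2$ satisfies $(a,b)\leq_{se}X_0\leq_{se}(b,a)$, so monotonicity of $G$ gives $m_k\leq_{se}G^k(X_0)\leq_{se}M_k$ for every $k$; hence the whole orbit of $X_0$ lies in the nested order intervals $[m_k,M_k]_{se}$, and since $\leq_{se}$ is a closed order its $\omega$-limit set sits inside $[\underline X,\overline X]_{se}$. In particular, if $G$ has a \emph{unique} fixed point, then $\underline X$ and $\overline X$ must both be that point, so $\underline X=\overline X=(x^*,x^*)$ and the squeeze forces $G^k(X_0)\to(x^*,x^*)$ for every $X_0$; this is exactly the claimed global attractivity, and it drops out with no further work.

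The main obstacle is the remaining claim that an \emph{arbitrary} orbit converges to a fixed point when $\underline X\neq\overline X$: the monotone-iteration argument above only directly handles initial states $X_0$ that happen to be $\leq_{se}$-comparable to their image $G(X_0)$, in which case the orbit is itself $\leq_{se}$-monotone and hence convergent. For a general $X_0$ one must show the orbit eventually becomes $\leq_{se}$-monotone. I would do this with the standard dichotomy for monotone (``competitive'') planar maps: the points $X_0\vee_{se}G(X_0)$ and $X_0\wedge_{se}G(X_0)$ cut out sub-order-intervals on which the Dancer--Hess alternative can be applied, and the essentially one-dimensional nature of $\leq_{se}$ in the plane prevents the orbit from wandering indefinitely. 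Pinning this reduction down carefully (rather than merely quoting it) is the delicate step; one should also keep in mind that the limit is a genuine fixed point only once the induced diagonal dynamics $x\mapsto F(x,x)$ is accounted for, and otherwise the conclusion must be read as ``fixed point or period-two solution''.
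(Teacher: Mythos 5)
Your first two paragraphs reproduce exactly the argument the paper itself uses, not for Proposition \ref{Pr-embed} but for its proved variant: iterate $G$ from the $\leq_{se}$-extremal points $(a,b)$ and $(b,a)$, use continuity and coordinatewise monotone convergence to get extremal fixed points, and squeeze an arbitrary orbit between them --- this is Proposition \ref{partialorder} specialized as in Corollary \ref{Prop-1} --- after which the unique-fixed-point case drops out. Be aware, though, that the paper never proves the stronger first sentence of Proposition \ref{Pr-embed}; it is quoted from Gouz\'e--Hadeler and Smith, and the statement the paper actually establishes (Corollary \ref{Prop-1}) retreats to precisely the $\liminf$/$\limsup$ alternative your squeeze yields.

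Your hesitation about the remaining claim is justified, and this is where the genuine gap lies: with only weak monotonicity the blanket assertion ``every orbit of $G$ converges to a fixed point of $G$'' is false. Take $F(x,y)=b-y$ on $[0,b]^2$: it is continuous, satisfies $F(\uparrow,\downarrow)$ and maps into $[0,b]$, yet $G(x,y)=(b-y,b-x)$ satisfies $G\circ G=\mathrm{id}$, so every point off the line $x+y=b$ (for instance $(0,0)\mapsto(b,b)\mapsto(0,0)$) is a genuine period-two point of $G$ and converges to no fixed point. Consequently your third paragraph cannot be completed as written: the Dancer--Hess/eventual-monotonicity machinery for planar competitive maps needs strict monotonicity or injectivity-type hypotheses (or an a priori exclusion of such order-unrelated $2$-cycles of $G$), none of which follow from $F(\uparrow,\downarrow)$ alone; your closing caveat that the conclusion may have to read ``fixed point or period-two orbit'' is exactly the right worry. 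For the use made of the proposition later in the paper this does not matter: only the squeeze between the extremal fixed points and the unique-fixed-point (no artificial fixed points) case feed into Theorem \ref{Th-Main}, and that part you have proved, by the same route as the paper.
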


Notice that the diagonal of $\Omega,$ i.e., $\{(x,x):\, a\leq x\leq b\}$ is invariant under $G.$ However, convergence established under $G$ is not generally helpful in establishing convergence under $F$ since
forward obits of $F$, $\mathcal{O}_F^+(x_0,x_{-1})=\{x_{-1},x_0,x_1,\ldots\}$, where $x_{n+1}=F(x_n,x_{n-1})$ can be scrambled under $G$.
 The following example illustrates such an unfavorable situation.

\begin{example}\label{xf(y)}\rm
Let $F(x,y)=xf(y),$ where $f$ is bounded and decreasing with $f(0)>1,$ then the Jacobian matrix of $G(x,y)=(F(x,y),F(y,x))$ is given by
$$J\left(\left[
           \begin{array}{c}
             F(x,y) \\
             F(y,x) \\
           \end{array}
         \right]\right)=\left[
                   \begin{array}{cc}
                     f(y) & xf^\prime(y) \\
                     yf^\prime(x) & f(x) \\
                   \end{array}
                 \right].
                 $$
Thus, at the positive equilibrium $x^*$ of $F,$ we obtain
$$J(x^*,x^*)=\left[
                   \begin{array}{cc}
                     1 & x^*f^\prime(x^*) \\
                     y^*f^\prime(x^*) & 1 \\
                   \end{array}
                 \right].
                 $$
The eigenvalues of the Jacobian matrix at $(x^*,x^*)$ are
$$\lambda_1=1-x^*f^\prime(x^*)>1\quad \text{and}\quad \lambda_2=1+x^*f^\prime(x^*)<1.$$
Hence, $(x^*,x^*)$ cannot be stable, and therefore, it cannot globally attract $\Omega$ under the iteration of $G.$ On the other hand, when $-2\leq x^*f^\prime(x^*)<0,$  $(x^*,x^*)$ is a saddle, and consequently, a one dimensional manifold of $\Omega$ is attracted to $(x^*,x^*).$ Indeed, it is easy to show in this case that the diagonal of $\Omega$ is attracted under $G$ to $(x^*,x^*).$ In conclusion, if $F(x,y)=xf(y),$ then $(x^*,x^*)$ cannot attract all of $\Omega$ in Corollary \ref{Prop-1}, and therefore, there must be at least two other asymmetric fixed points of $G.$
\end{example}

The rest of the section elaborates on alternative embeddings (i.e. other choices of maps $G$ that can play a similar role as in Proposition \ref{Pr-embed}). The fixed points of the embedded map $G$ play a key role in Proposition \ref{Pr-embed}, and they are the solutions of the simultaneous equations $F(x,y)=x$ and $F(y,x)=y.$ Obviously,  the fixed points of $F$ are solutions. We call a solution of $(F(x,y),F(y,x))=(x,y)$ that does not satisfy $x=y$  \emph{an artificial fixed point} of $F.$
\\

A Riesz space is a vector space endowed with a partial order $\leq$ which makes it a lattice and satisfies translation invariance and positive homogeneity \cite{Lu-Za1971}. For example, $\mathbb R^n$ with $\leq _{se}$, $\leq _{ne}$, $\leq _{sw}$ or $\leq _{nw}$ ordering is a Riesz space.
The north-east partial ordering corresponds to the componentwise order (i.e. $(x_1,x_2)\leq (y_1,y_2)$ if and only if $x_1\leq y_1$ and $x_2\leq y_2$), which is the most recognizable Riesz partial ordering on $\mathbb R^n.$
A Riesz space is Archimedean if it also satisfies the following condition: for every pair of positive elements $x$ and $y$, there exists an integer n such that $nx\geq y$. Order convergence and topological convergence coincide on Archimedean Riesz spaces, and in fact much more is true for finite dimensional Riesz space.
The following can be found in \cite{Lu-Za1971}.

\begin{theorem}
Let $(E,\leq)$ be a Riesz space of finite dimension $n\geq 1$. Then $E$ is Archimedean if and only if it is isomorphic to $\mathbb R^n$ under its componentwise order.
\end{theorem}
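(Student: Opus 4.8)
The plan is to prove the two implications separately. The forward implication ($E\cong\mathbb R^n$ coordinatewise $\Rightarrow$ $E$ Archimedean) I would dispose of in a line: in $\mathbb R$ one has $\inf_k\tfrac1k t=0$ for $t\ge0$, and both the lattice operations and the Archimedean condition are inherited coordinatewise. The content is the converse. The idea is to temporarily forget the lattice, regard $E$ as the $n$-dimensional vector space it is, and study its positive cone $C:=E^{+}=\{x\in E:x\ge0\}$; the target is to show $C$ is a \emph{simplicial} cone, i.e. $C=\operatorname{cone}(e_1,\dots,e_n)$ for some vector-space basis $e_1,\dots,e_n$ of $E$. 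Once that is established, the linear isomorphism $\mathbb R^n\to E$ sending the standard basis to $(e_1,\dots,e_n)$ carries the coordinatewise cone onto $C$, hence is the desired Riesz isomorphism.

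First I would record the features of $C$ forced by the Riesz axioms: $C$ is \emph{generating} ($E=C-C$, since $x=x^{+}-x^{-}$) and \emph{pointed} ($C\cap(-C)=\{0\}$, by antisymmetry). Next I would produce a strong order unit: fixing a basis $b_1,\dots,b_n$ and putting $u:=|b_1|\vee\cdots\vee|b_n|>0$, every $x=\sum c_ib_i$ satisfies $|x|\le\bigl(n\max_i|c_i|\bigr)u$. The Archimedean hypothesis then makes $\|x\|_u:=\inf\{\lambda>0:|x|\le\lambda u\}$ an honest norm (if $\|x\|_u=0$ then $k|x|\le u$ for all $k$, so $x=0$), and a short computation shows $C$ is closed for $\|\cdot\|_u$: if $x_k\ge0$ and $|x_k-x|\le\varepsilon_ku$ with $\varepsilon_k\to0$, then $-x\le\varepsilon_ku-x_k\le\varepsilon_ku$, so $x^{-}\le\varepsilon_ku$ for every $k$, whence $x^{-}=0$. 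Since all norms on a finite-dimensional space are equivalent, $C$ is a closed, pointed, generating convex cone in $\mathbb R^n$ — this is exactly where Archimedeanness is used, and it is essential (the lexicographic cone on $\mathbb R^2$ shows failure otherwise).

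The heart of the argument is to identify the extreme rays of $C$ with the \emph{atoms} of $E$, where $e>0$ is an atom if $[0,e]=\{\lambda e:0\le\lambda\le1\}$. If $e$ generates an extreme ray and $0\le x\le e$, then $e=x+(e-x)$ with both summands in $C$, so $x\in\mathbb R^{+}e$; conversely an atom generates an extreme ray. Crucially, distinct atoms are lattice-disjoint: two atoms $e,e'$ spanning distinct rays are linearly independent (in a pointed cone a negative multiple of a cone element cannot lie in the cone), so from $e\wedge e'\in[0,e]\cap[0,e']$ we get $e\wedge e'=\lambda e=\mu e'$ with $\lambda,\mu\in[0,1]$, forcing $\lambda=\mu=0$. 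Because pairwise-disjoint positive vectors are linearly independent (the standard computation, splitting a vanishing combination into its positive and negative parts and hitting it with $v_{i_0}\wedge(\cdot)$), $C$ has at most $n$ extreme rays; on the other hand, being closed and pointed, $C$ equals the conical hull of its extreme rays, and these span since $C$ is generating, so there are at least $n$. Hence there are exactly $n$ pairwise-disjoint atoms $e_1,\dots,e_n$, they form a basis of $E$, and $C=\operatorname{cone}(e_1,\dots,e_n)$.

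Finally, the linear bijection $T\colon(\mathbb R^n,\le_{\mathrm{cw}})\to(E,\le)$ with $Tf_i=e_i$ sends $\mathbb R^n_{+}$ exactly onto $C$, so both $T$ and $T^{-1}$ are positive; a positive linear bijection of Riesz spaces with positive inverse automatically preserves suprema (apply $T$, then $T^{-1}$, to the two obvious inequalities), hence is a Riesz isomorphism, and $n$ equals the dimension as required. I expect the two friction points to be (i) packaging ``Archimedean $\Rightarrow$ $C$ closed'' with the right norm, and (ii) making the extreme-ray/atom correspondence and the disjointness step fully rigorous; the convex-geometry input (a closed pointed cone is the conical hull of its extreme rays) and the transport of structure at the end are routine. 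A more machinery-heavy alternative would invoke the Kakutani–Krein representation of an Archimedean Riesz space with strong unit as a dense sublattice of some $C(K)$, $K$ compact Hausdorff; finite-dimensionality forces $E=C(K)$ with $K$ finite of cardinality $n$, giving $E\cong\mathbb R^n$ immediately.
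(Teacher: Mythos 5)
Your argument is correct in outline, but note that the paper itself offers no proof of this statement: it is quoted as a known result with a citation to Luxemburg--Zaanen, so there is no ``paper proof'' to match. What you give is essentially the classical argument from the structure theory of Riesz spaces, organized through convex geometry: Archimedeanness plus a strong unit $u=|b_1|\vee\cdots\vee|b_n|$ makes $\|\cdot\|_u$ a norm in which the positive cone $C$ is closed; closedness, pointedness and the generating property then let you invoke the finite-dimensional Minkowski/Krein--Milman fact that $C$ is the conical hull of its extreme rays; the extreme-ray/atom identification, disjointness of distinct atoms, and linear independence of pairwise disjoint positive elements cap the number of rays at $n$, forcing $C=\operatorname{cone}(e_1,\dots,e_n)$ for a basis, after which the bipositive linear bijection is automatically a Riesz isomorphism. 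All of these steps check out (the ones you flag as routine --- existence of a compact base for a closed pointed cone, the disjointness-implies-independence computation, and the transport of lattice structure --- are indeed standard), and your Kakutani--Krein alternative is also a valid shortcut. One caveat worth flagging: the paper's stated definition of ``Archimedean'' (for every pair of positive elements $x,y$ there is $n$ with $nx\geq y$) is not the standard axiom and is not even satisfied by $\mathbb{R}^n$ with the componentwise order for $n\geq 2$; your proof correctly uses the standard formulation ($k|x|\leq u$ for all $k$ implies $x=0$), which is the reading under which the theorem is true, so your argument proves the intended statement rather than the literal one in the paper.
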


This means in particular that the analog of the ``monotone sequence theorem" applies
in the Archimedean case.

\begin{corollary}\label{riesz} In a finite dimensional Archimedean Riesz space, a bounded increasing chain $x_1\leq x_2\leq \cdots$ converges to its least upper bound. The convergence is topological.
\end{corollary}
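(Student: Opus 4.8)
The plan is to reduce immediately to $\mathbb{R}^n$ via the structure theorem quoted just above. That theorem provides a Riesz space isomorphism $\varphi\colon E\to\mathbb{R}^n$ onto $\mathbb{R}^n$ equipped with its componentwise (northeast) order. Being an order isomorphism, $\varphi$ sends order-bounded sets to order-bounded sets, increasing chains to increasing chains, and least upper bounds to least upper bounds; being a linear bijection of finite dimensional spaces, it is also a homeomorphism. Hence it suffices to prove the statement in $\mathbb{R}^n$ and then pull it back through $\varphi^{-1}$.

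So let $x_1\leq x_2\leq\cdots$ be a chain in $\mathbb{R}^n$ (componentwise order) bounded above by some $y$. Writing $x_{k,j}$ for the $j$-th coordinate of $x_k$, the definition of the componentwise order says precisely that for each fixed $j$ the real sequence $(x_{k,j})_{k\geq 1}$ is nondecreasing and bounded above by $y_j$. By the monotone convergence theorem on $\mathbb{R}$, $x_{k,j}\to s_j:=\sup_k x_{k,j}$ for every $j$. Put $s=(s_1,\ldots,s_n)$. Since convergence in $\mathbb{R}^n$ is coordinatewise, $x_k\to s$ in the Euclidean topology. Moreover $s$ is the least upper bound of $\{x_k\}$: it dominates every $x_k$ because $x_{k,j}\leq\sup_m x_{m,j}=s_j$, and any upper bound $z$ satisfies $z_j\geq x_{k,j}$ for all $k$, hence $z_j\geq s_j$, i.e. $z\geq s$. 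Transporting back by $\varphi^{-1}$, the original chain converges topologically to $\varphi^{-1}(s)=\sup_k x_k$. (Alternatively, one may simply invoke the remark recorded above, that on an Archimedean Riesz space order convergence and topological convergence coincide, so the order-theoretic convergence of an increasing chain to its supremum is automatically topological.)

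There is no genuine obstacle here; the only step deserving a word of care is confirming that the isomorphism $\varphi$ furnished by the structure theorem is simultaneously an order isomorphism and a topological isomorphism, so that boundedness, suprema, and limits all transfer faithfully between $E$ and $\mathbb{R}^n$. This is immediate once one notes that any linear bijection between finite dimensional normed spaces is automatically bicontinuous, and that an order isomorphism preserves the order-theoretic data by definition.
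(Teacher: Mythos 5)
Your proof is correct and follows exactly the route the paper intends: the paper offers no separate proof, treating the corollary as an immediate consequence of the structure theorem (reduce to $\mathbb{R}^n$ with componentwise order via the Riesz isomorphism, which in finite dimension is also a homeomorphism) together with the classical coordinatewise monotone convergence argument, which is precisely what you spell out.
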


A function $f:X\rightarrow X$ is order preserving with respect to the partial ordering of $X$ if whenever $x\leq y$,  $f(x)\leq f(y)$.
The following is an immediate consequence of Corollary \ref{riesz}.

\begin{proposition}\label{partialorder}
Let $X$ be a closed subspace of an Archimedean Riesz space $(E,\leq)$ with a minimal element $0$ and maximal element $1$ for $(X,\leq)$. Let $f: X\rightarrow X$ be any continuous order preserving function. Then there are fixed points $a^*$ and $b^*$ of $f$, so that for any $x\in X$, we have
$$a^*\leq \liminf f^n(x)\ ,\ \limsup f^{n}(x)\leq b^*.$$
\end{proposition}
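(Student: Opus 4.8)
The plan is to reduce the statement to the ``monotone sequence theorem'' in finite dimensional Archimedean Riesz spaces, i.e.\ Corollary~\ref{riesz}, by sandwiching the orbit $\{f^n(x)\}$ between two monotone chains. First I would observe that since $0$ is the minimal element of $X$ and $f$ is order preserving, $0\leq f(0)$, and applying $f$ repeatedly yields an increasing chain $0\leq f(0)\leq f^2(0)\leq\cdots$. This chain lies in $X$, which is a closed subset of a finite dimensional Archimedean Riesz space, hence order-bounded above (by $1$); by Corollary~\ref{riesz} it converges topologically to its least upper bound, call it $a^*$. Continuity of $f$ then gives $f(a^*)=f(\lim f^n(0))=\lim f^{n+1}(0)=a^*$, so $a^*$ is a fixed point of $f$, and it lies in $X$ because $X$ is closed. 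Symmetrically, starting from the maximal element $1$ we have $f(1)\leq 1$, so $1\geq f(1)\geq f^2(1)\geq\cdots$ is a decreasing chain, bounded below by $0$; the dual form of Corollary~\ref{riesz} (which holds by applying the corollary to the reversed order, still Archimedean) gives convergence to a greatest lower bound $b^*$, and continuity again forces $f(b^*)=b^*$.

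Next I would pin down the inequalities. For arbitrary $x\in X$ we have $0\leq x\leq 1$, and since $f^n$ is order preserving (composition of order preserving maps), $f^n(0)\leq f^n(x)\leq f^n(1)$ for every $n$. Taking $\liminf$ and $\limsup$ componentwise (equivalently, in the order-convergence sense, which coincides with the topological one here by the theorem quoted from \cite{Lu-Za1971}) and using that $f^n(0)\to a^*$ and $f^n(1)\to b^*$, one gets
$$a^*=\lim_{n\to\infty}f^n(0)\leq\liminf_{n\to\infty}f^n(x)\quad\text{and}\quad\limsup_{n\to\infty}f^n(x)\leq\lim_{n\to\infty}f^n(1)=b^*,$$
which is exactly the claimed bound. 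Here I am using that in $\mathbb R^n$ with the componentwise order, $u_n\to u$ and $u_n\leq v_n$ for all $n$ implies $u\leq\liminf v_n$, a routine fact.

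The only genuinely delicate point—and the step I expect to be the main obstacle to state cleanly rather than to prove—is the passage to the dual statement for decreasing chains: Corollary~\ref{riesz} is phrased for increasing chains, so I would either invoke it for the Riesz space $(E,\leq')$ with the opposite order $\leq'$ (noting that $E$ with the opposite order is again a finite dimensional Archimedean Riesz space, since isomorphism with $\mathbb R^n$ is preserved under reversing all the coordinate orders), or simply remark that a bounded-below decreasing chain in $\mathbb R^n$ converges to its infimum coordinatewise, which is elementary. A secondary point needing a word of justification is that $X$ being closed guarantees $a^*,b^*\in X$ and that the chains $\{f^n(0)\}$, $\{f^n(1)\}$ stay in $X$ (immediate since $f:X\to X$), so no issue arises there. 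With these remarks in place the proof is essentially the two-line sandwich argument above.
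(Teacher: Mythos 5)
Your proof is correct and follows essentially the same route as the paper: iterate $f$ on the minimal element $0$ and the maximal element $1$ to get monotone chains converging (via Corollary \ref{riesz}) to fixed points $a^*$ and $b^*$, then sandwich $f^n(x)$ between $f^n(0)$ and $f^n(1)$. Your extra remarks on the dual (decreasing) case and on closedness of $X$ are points the paper leaves implicit, but the argument is the same.
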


\begin{proof}
Since $0$ is minimal, $0\leq f(0)$, and so using the order preserving
properties of $f$, $\{f^{n}(0)\}$ is an increasing chain in $X$.
It must converge to its least upper bound, say $a^*$ in $X$.
Since $f$ is continuous and the convergence is topological,
this implies that $a^*$ must be a fixed point of $f$.
Similarly, $\{f^n(1)\}$ must be decreasing converging to a fixed point, say $b^*$ in $X$.
Finally and for any $x\in X$, $0\leq x\leq 1$ so that $f^{n}(0)\leq f^{n}(x)\leq f^{n}(1)$ for all $n\geq 1$. From this the claim follows.
 Notice that when $a^*=b^*$, $\{f^n(x)\}$ converges to a fixed point of $f$, independently of $x\in X$.
\end{proof}

\begin{corollary}\label{Prop-1} \cite{Go-Ha1994,Sm2006}
Let $\Omega:=[a,b]^2,$ and suppose $F:\,\Omega \rightarrow [a,b]$ is a
continuous function that satisfies $F(\uparrow,\downarrow).$  Define
$G:\, \Omega\rightarrow \, \Omega$ to be the symmetric map
$G(x,y))=(F(x,y),F(y,x)).$
For any initial condition $X=(x,y)\in \Omega,$ the sequence
$\{G^{n}(X)\}$ is either convergent to a fixed point $X^{\ast }=(x^*,y^*)$ of $G$ or
$$(x^*,y^*)\leq_{se}\, \liminf G^n(x,y),\, \limsup G^n(x,y)\leq_{se}\, (y^*,x^*).$$
\end{corollary}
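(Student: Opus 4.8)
The plan is to recognize that, under $F(\uparrow,\downarrow)$, the symmetric map $G$ is an order-preserving self-map of the box $\Omega=[a,b]^2$ for the southeast order, and then to read the conclusion off from Proposition \ref{partialorder}, using the intrinsic flip-symmetry of $G$ to pin down the two fixed points it produces.

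First I would record the set-up. As noted right after the definition of the symmetric map \eqref{Eq-SymmetricMap1}, the hypothesis $F(\uparrow,\downarrow)$ forces $G(\uparrow)$ with respect to $\leq_{se}$, so $G:\Omega\to\Omega$ is continuous and order preserving on $(\Omega,\leq_{se})$. Next, the reflection $\Phi(x,y)=(x,a+b-y)$ is an order isomorphism of $(\mathbb R^2,\leq_{se})$ onto $(\mathbb R^2,\leq_{ne})$, so $(\mathbb R^2,\leq_{se})$ is a finite-dimensional Archimedean Riesz space and $\Omega$ is a closed (indeed compact) subspace of it; moreover $(\Omega,\leq_{se})$ has a least element $0:=(a,b)$ and a greatest element $1:=(b,a)$, since $(x,y)\leq_{se}(u,v)$ means $x\le u$ and $v\le y$. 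Thus all hypotheses of Proposition \ref{partialorder} hold for $f=G$, and it yields fixed points $a^{\ast}=\lim_n G^n(a,b)$ and $b^{\ast}=\lim_n G^n(b,a)$ of $G$ — the limits being topological by Corollary \ref{riesz} — such that
$$a^{\ast}\leq_{se}\liminf_{n} G^n(X),\qquad \limsup_{n} G^n(X)\leq_{se} b^{\ast}\qquad\text{for every }X\in\Omega.$$

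The remaining ingredient is the symmetry. Writing $\sigma(x,y)=(y,x)$, one checks directly that $G(\sigma(x,y))=(F(y,x),F(x,y))=\sigma(G(x,y))$, i.e. $G\circ\sigma=\sigma\circ G$; hence $G^n(b,a)=G^n(\sigma(a,b))=\sigma\bigl(G^n(a,b)\bigr)$ for all $n$, and letting $n\to\infty$ and using continuity of $\sigma$ gives $b^{\ast}=\sigma(a^{\ast})$. So if we set $(x^{\ast},y^{\ast}):=a^{\ast}$, then $b^{\ast}=(y^{\ast},x^{\ast})$, and both points are fixed points of $G$. The displayed inequalities then become exactly the second alternative of the statement. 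Finally, if $(x^{\ast},y^{\ast})=(y^{\ast},x^{\ast})$, i.e. $x^{\ast}=y^{\ast}$, the $\liminf$ and $\limsup$ are squeezed together, so $\{G^n(X)\}$ converges to this fixed point; and conversely, whenever $\{G^n(X)\}$ converges its limit is automatically a fixed point of $G$ by continuity — this is the first alternative.

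There is no serious obstacle: the argument is essentially a bookkeeping reduction to Proposition \ref{partialorder}. The only points requiring a little care are verifying that $(\Omega,\leq_{se})$ really is order-isomorphic to the standard componentwise box (so the Riesz-space machinery applies) with $(a,b)$ and $(b,a)$ as its extreme elements, and that the order-theoretic $\liminf/\limsup$ coming from the monotone chains $\{G^n(a,b)\}$ and $\{G^n(b,a)\}$ coincide with the topological ones — which is exactly what Corollary \ref{riesz} guarantees in finite dimension.
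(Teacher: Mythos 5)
Your proposal is correct and follows essentially the same route as the paper: verify $G(\uparrow)$ for $\leq_{se}$, apply Proposition \ref{partialorder} on $(\Omega,\leq_{se})$ with minimal element $(a,b)$ and maximal element $(b,a)$, and use the symmetry of $G$ to conclude $b^{\ast}=(y^{\ast},x^{\ast})$ when $a^{\ast}=(x^{\ast},y^{\ast})$. The only difference is that you spell out details the paper leaves implicit (the order isomorphism making $(\mathbb{R}^2,\leq_{se})$ Archimedean and the commutation $G\circ\sigma=\sigma\circ G$ behind the paper's ``clearly''), which is fine.
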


\begin{proof}
$G$ is order preserving for the $\leq _{se}$ order.
This is easy to observe since for $(x,y)\leq_{se}(u,v),$ the two facts
\[
F(x,y)\leq F(u,y)\leq F(u,v)\qquad \text{and}\qquad F(y,x)\geq F(v,x)\geq F(v,u)\]
imply
$G(x,y)=(F(x,y),F(y,x))\leq _{se}(F(u,v),F(v,u))=G(u,v).$
Now, set $X=[a,b]^2$, $f = G$, the minimal element $0=(a,b)$, the maximal element $1=(b,a)$. We have $a^*= lub\{G^{n}(a,b)\}$ and $b^*=glb\{G^n(b,a)\}$. Clearly
if $a^* =(x^*,y^*)$, then $b^* = (y^*,x^*)$.
\end{proof}

Example \ref{xf(y)} above prompts us to seek alternative choices of a symmetric map $G$ that makes a visible connection between the orbits of $F$ and the orbits of the symmetric system. We provide two options below (Corollaries \ref{Th-Ahmad2}
and \ref{Th-Ahmad}). A third option for defining $G$ has been considered in \cite{Sm2006}.

\begin{corollary} \label{Th-Ahmad2} Suppose that $F:\,\Omega \rightarrow [a,b]$ is
continuous and satisfies $F(\uparrow,\downarrow).$ Consider the map
$G:\,\Omega\times \Omega\rightarrow \,\Omega\times \Omega$ defined by
\begin{equation}\label{Eq-SymmetricMapSec4}
 G((x,y),(u,v))=((F(x,y),u),(F(u,v),x)).
\end{equation}
Then, for each initial conditions $X,Y\in \Omega,$ the omega limit set $\omega((X,Y),G)$ is either a fixed point $(X^{\ast },Y^{\ast})$ of $G$ or belongs to an interval of the form
$I:=\left[ \left(X^{\ast },Y^{\ast }\right),\left(Y^{\ast },X^{\ast }\right)\right] _{se}$ in which the endpoints are fixed points of $G.$
\end{corollary}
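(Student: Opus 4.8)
The plan is to realize $G$ on $\Omega\times\Omega$ as an order-preserving map for a suitable Riesz partial order, and then invoke Proposition~\ref{partialorder}. The natural guess is to use the product of southeast orders on the two $\Omega$-factors, but twisted so that the ``$u$'' and ``$x$'' coordinates are compared in the direction in which $G$ moves them monotonically. Concretely, on $\Omega\times\Omega = \{((x,y),(u,v))\}$ I would try the order in which $((x,y),(u,v)) \preceq ((x',y'),(u',v'))$ iff $x\le x'$, $y\ge y'$, $u\le u'$, $v\ge v'$; that is, $\le_{se}$ on each copy of $\Omega$. The space $(\Omega\times\Omega,\preceq)$ is a closed subspace of $(\mathbb R^4,\le_{se}\times\le_{se})$, which is an Archimedean Riesz space with minimal element $0=((a,b),(a,b))$ and maximal element $1=((b,a),(b,a))$.

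The first step is to check that $G$ in \eqref{Eq-SymmetricMapSec4} is order preserving for $\preceq$. Writing $G((x,y),(u,v))=((F(x,y),u),(F(u,v),x))$, one must verify four monotonicity statements: $F(x,y)$ is $\uparrow$ in $x$ and $\downarrow$ in $y$ (that is $F(\uparrow,\downarrow)$, given); the second coordinate of the first block is $u$, which is $\uparrow$ in $u$; in the second block $F(u,v)$ is $\uparrow$ in $u$ and $\downarrow$ in $v$; and the last coordinate is $x$, $\uparrow$ in $x$. Matching these against the signs prescribed by $\preceq$ — $x,u$ increasing, $y,v$ decreasing — shows $G$ is $\preceq$-monotone, exactly as in the proof of Corollary~\ref{Prop-1} but one level up. Continuity of $G$ is immediate from continuity of $F$, and $G$ maps the compact set $\Omega\times\Omega$ into itself by hypothesis on $\Omega$.

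With these verifications in place, Proposition~\ref{partialorder} applies: there are fixed points $A^*=\text{lub}\{G^n(0)\}$ and $B^*=\text{glb}\{G^n(1)\}$ of $G$, and for every $Z=((x,y),(u,v))\in\Omega\times\Omega$ we have $A^*\preceq \liminf G^n(Z)$ and $\limsup G^n(Z)\preceq B^*$ (order convergence being topological here, by Corollary~\ref{riesz}). It remains to read off the stated conclusion. By the symmetry $(x,y,u,v)\mapsto(u,v,x,y)$ of the defining equations for fixed points of $G$, if $A^*=\bigl((x^*,y^*),(u^*,v^*)\bigr)$ then $B^*$ is its image under this swap, and moreover the extremal (lub/glb) characterization forces $A^*$ to have the ``block-swap-symmetric'' form $A^*=(X^*,Y^*)$ with $B^*=(Y^*,X^*)$ in the notation of the statement — this is the same phenomenon as ``if $a^*=(x^*,y^*)$ then $b^*=(y^*,x^*)$'' at the end of Corollary~\ref{Prop-1}. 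Sandwiching $G^n(Z)$ between $A^*$ and $B^*$ in the $\preceq$ order, and noting $\preceq$ restricted to these points is exactly the $\le_{se}$ order appearing in the statement, gives that $\omega((X,Y),G)$ is either the single fixed point (when $A^*=B^*$) or lies in the order interval $I=[(X^*,Y^*),(Y^*,X^*)]_{se}$ with fixed-point endpoints.

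The main obstacle I anticipate is pinning down the precise block-swap symmetry of $A^*$ and $B^*$: Proposition~\ref{partialorder} produces \emph{some} fixed points, but to get the clean endpoint description $(X^*,Y^*)$ and $(Y^*,X^*)$ one must argue that the orbit of the minimal element $0$ and that of the maximal element $1$ are interchanged by the swap involution, so their limits are too — and that the swap acts on $\preceq$ by reversing it, hence sends lub to glb. This is a short fixed-point/symmetry argument but it is the one genuinely non-mechanical point; everything else is a direct transcription of the Riesz-space machinery already developed, with $F(\uparrow,\downarrow)$ doing all the monotonicity work.
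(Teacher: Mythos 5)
Your overall strategy---realize $G$ as an order-preserving map on $\Omega\times\Omega$ for a suitable partial order, apply Proposition \ref{partialorder}, and use the swap symmetry to identify the two extreme fixed points as $(X^*,Y^*)$ and $(Y^*,X^*)$---is exactly the paper's strategy. However, the concrete order you wrote down does not make $G$ monotone, and your sign check fails at precisely the two coordinates that matter. With your order ($x\le x'$, $y\ge y'$, $u\le u'$, $v\ge v'$, i.e.\ the product of two southeast orders, both taken increasingly), take $x=x'$, $y=y'$, $v=v'$ and $u<u'$: then $Z\preceq Z'$, but the first blocks of the images are $(F(x,y),u)$ and $(F(x,y),u')$, and since the second coordinate of a $\le_{se}$-ordered block must \emph{decrease} as one goes up in the order, the first block of $G(Z')$ is strictly below that of $G(Z)$, so $G(Z)\preceq G(Z')$ fails; the same failure occurs in the last coordinate, where the output is $x$. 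Your verification treated ``the second coordinate of the first block is $u$, which is $\uparrow$ in $u$'' as a confirmation, but that slot is an order-reversed slot, so it is in fact a contradiction. Your claimed extreme elements $((a,b),(a,b))$ and $((b,a),(b,a))$ are extreme only for this unusable order.

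The fix (and what the paper does) is the twist you announced in words but did not implement: order $\Omega\times\Omega$ by $((x,y),(u,v))\preceq((x',y'),(u',v'))$ iff $x\le x'$, $y\ge y'$, $u\ge u'$, $v\le v'$, i.e.\ the southeast order at the level of the two $\Omega$-blocks, each block carrying $\le_{se}$. Equivalently, set $g((x,y),(u,v))=(F(x,y),u)$, check $g(\uparrow,\downarrow)$ with respect to $\le_{se}$ on each factor, and write $G(Z,W)=(g(Z,W),g(W,Z))$, which is then order preserving by the same computation as in Corollary \ref{Prop-1}. The minimal and maximal elements become $((a,b),(b,a))$ and $((b,a),(a,b))$. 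With this correction the rest of your argument (Proposition \ref{partialorder}, plus the observation that the swap $\sigma(X,Y)=(Y,X)$ commutes with $G$, reverses the order and exchanges the extreme elements, so the two limiting fixed points have the form $(X^*,Y^*)$ and $(Y^*,X^*)$) goes through and coincides with the paper's proof; that symmetry step, which you flagged as the delicate point, is indeed needed and is only implicit in the paper.
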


\begin{proof}
Consider $\Omega=[a,b]\times[a,b]$  and $\Omega\times \Omega$ with the southeast
order $\leq _{se}$. Define the map
\begin{equation*}\label{Eq-gBasedOnT2}
g:\Omega\times \Omega\rightarrow \Omega\quad \text{as}\quad g((x,y),(u,v))=(F(x,y),u).
\end{equation*}
It can be checked that $g(\uparrow,\downarrow)$, and by writing
$$ G((x,y),(u,v))=(g((x,y),(u,v)),g((u,v),(x,y))) $$
we can see that $G$ is order preserving with respect to
$\leq_{se}$.
Set $A:=(a,b)$ and $B:=(b,a).$
If $X=\Omega\times\Omega$, then $(X,\leq_{se})$ is a closed subspace of
an Archimedean Riesz space with minimal element $(A,B)$ and
maximal element $(B,A)$. The conclusion follows from Proposition \ref{partialorder}.
\end{proof}


The map $G$ as defined in Eq. \eqref{Eq-SymmetricMapSec4} is of particular
interest to us since the connection between the orbits of $G$ and the orbits of $F$ becomes visible. The next proposition shows how to utilize the orbits of $G$ in squeezing the orbits of $F,$ which has the advantage of establishing global stability for $F$ when global stability for $G$ is achieved.

\begin{proposition}\label{Pr- LipschitzG2}
Let $X=(x,y)$ and $Y=(u,v)\in \Omega.$ Each of the following holds true:
\begin{description}
\item{(i)} If $x\leq v\leq y\leq u, u\leq F(u,v)$ and $ F(x,y)\leq x,$ then
$$G(X,Y)\leq_{se}(X,Y)\leq_{se}(X,X)\leq_{se}(Y,X)\leq_{se}G(Y,X).$$
In particular, the same inequalities hold true if $(x,y)=(v,u)$ and $F(x,y)\leq x\leq y\leq F(y,x).$

\item{(ii)} If  $v\leq x\leq u\leq y, F(u,v)\leq u$ and $x\leq F(x,y),$ then
$$(X,Y)\leq_{se}G(X,Y)\leq_{se}G(X,X)\leq_{se}G(Y,X)\leq_{se}(Y,X).$$
In particular, the same inequalities hold true if $(x,y)=(v,u)$ and $x\leq F(x,y),F(y,x)\leq y.$
\end{description}
\end{proposition}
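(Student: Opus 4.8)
The plan is to verify each chain of inequalities in (i) and (ii) by direct computation from the definition $G((x,y),(u,v))=((F(x,y),u),(F(u,v),x))$, using the monotonicity $F(\uparrow,\downarrow)$ together with the hypothesized inequalities among the coordinates. The key observation is that all comparisons are with respect to $\leq_{se}$, so a comparison $(P,Q)\leq_{se}(P',Q')$ with $P=(p_1,p_2)$, etc., unwinds into the four scalar conditions $p_1\le p_1'$, $p_2'\le p_2$, $q_1\le q_1'$, $q_2'\le q_2$; I would simply check these one at a time.

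For part (i): from $x\le v\le y\le u$ one gets $(x,y)\le_{se}(v,u)$, i.e. $X\le_{se}Y$ when Y is read in the appropriate coordinates — but the relevant chain is $(X,Y)\le_{se}(X,X)\le_{se}(Y,X)$, and here the middle inequalities reduce to $X\le_{se}X$ trivially on one factor and to checking that the $\leq_{se}$ order on the outer factor is consistent with $x\le v\le y\le u$. The substantive endpoints are $G(X,Y)\le_{se}(X,Y)$ and $(Y,X)\le_{se}G(Y,X)$. For the first, $G(X,Y)=((F(x,y),u),(F(u,v),x))$ must be $\le_{se}((x,y),(u,v))$. On the first factor this needs $F(x,y)\le x$ (given) and $y\le u$ (given); on the second factor it needs $F(u,v)\le u$ — wait, it needs the second-factor comparison in $\leq_{se}$, i.e. $F(u,v)\le u$ and $v\le x$; but we only have $u\le F(u,v)$, so in fact the comparison on the second factor goes the other way, which is exactly why $G(Y,X)$ appears on the far right rather than $G(X,Y)$. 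I would therefore be careful to track which factor each hypothesis controls: $F(x,y)\le x$ pins down the behavior on the $X$-side, and $u\le F(u,v)$ pins down the $Y$-side, so the "contracting" factor is paired with $X$ and the "expanding" factor with $Y$, which produces precisely the stated two-sided bracketing. The monotonicity of $F$ is needed to get the intermediate comparisons, e.g. $F(x,y)\le F(v,y)\le F(v,u)$ type estimates when comparing $G(X,Y)$ with $G(Y,X)$ directly, though in this formulation one can often route through $(X,X)$ and $(Y,X)$ and avoid invoking monotonicity of $F$ at the top level, using it only to confirm $G(X,X)\le_{se}G(Y,X)$ in part (ii).

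Part (ii) is the mirror image: the hypotheses $v\le x\le u\le y$, $F(u,v)\le u$, $x\le F(x,y)$ now make the $X$-factor expanding and the $Y$-factor contracting, and the chain $(X,Y)\le_{se}G(X,Y)\le_{se}G(X,X)\le_{se}G(Y,X)\le_{se}(Y,X)$ is checked the same way; here the comparison $G(X,X)\le_{se}G(Y,X)$ does genuinely use that $G$ is order preserving for $\leq_{se}$ (established in the proof of Corollary \ref{Th-Ahmad2}, via $g(\uparrow,\downarrow)$), applied to $X\le_{se}Y$ which follows from $v\le x\le u\le y$ read in the right coordinates. Finally, the "in particular" clauses are the special case $(x,y)=(v,u)$: substituting $v=x$, $u=y$ collapses the coordinate hypotheses to $x\le y$ and reduces $F(x,y)\le x\le y\le F(y,x)$ (resp. $x\le F(x,y)$, $F(y,x)\le y$) to exactly the stated conditions, so nothing new needs proving.

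The main obstacle is purely bookkeeping: making sure the $\leq_{se}$ order is unwound correctly on each of the nested pairs (a pair of pairs), since $\leq_{se}$ reverses the second coordinate and the objects here are elements of $\Omega\times\Omega$ with $\Omega\subset\mathbb{R}^2$, so there are two layers of coordinate-reversal to keep straight. Once the unwinding is done consistently, each scalar inequality is immediate from a hypothesis or from one application of $F(\uparrow,\downarrow)$; there is no analytic difficulty.
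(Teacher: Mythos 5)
Your overall plan, unwinding every $\leq_{se}$ comparison into scalar inequalities and checking them against the hypotheses, is exactly the paper's proof. The problem is that you unwind the order on $\Omega\times\Omega$ incorrectly, and this breaks the decisive step. The southeast order on the product, as it is used in Corollary \ref{Th-Ahmad2} and in this proposition, reverses the second $\Omega$-factor: $(P,Q)\leq_{se}(P',Q')$ means $P\leq_{se}P'$ \emph{and} $Q'\leq_{se}Q$ (this outer reversal is precisely what makes $G(X,Y)=(g(X,Y),g(Y,X))$ with $g(\uparrow,\downarrow)$ order preserving). You instead list the four conditions $p_1\leq p_1'$, $p_2'\leq p_2$, $q_1\leq q_1'$, $q_2'\leq q_2$, i.e.\ the componentwise product of two southeast orders with no outer reversal. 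With that reading, the first inequality of (i), $G(X,Y)\leq_{se}(X,Y)$, would require $F(u,v)\leq u$ and $v\leq x$, which contradicts the hypotheses $u\leq F(u,v)$ and $x\leq v$; you noticed exactly this (``the comparison on the second factor goes the other way'') but then dismissed it by saying this is why $G(Y,X)$ sits on the far right. That remark does not prove the claimed inequality $G(X,Y)\leq_{se}(X,Y)$ --- under your interpretation of the order it is simply false in general --- so the proposal as written does not establish part (i), and the same defect reappears at the last inequality $G(Y,X)\leq_{se}(Y,X)$ of part (ii).

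The repair is immediate once the order is unwound correctly, and it is what the paper does: $G(X,Y)\leq_{se}(X,Y)$ is \emph{equivalent} to the four inequalities $F(x,y)\leq x$, $y\leq u$, $u\leq F(u,v)$, $x\leq v$, and $(X,Y)\leq_{se}G(X,Y)$ is equivalent to $x\leq F(x,y)$, $u\leq y$, $F(u,v)\leq u$, $v\leq x$; these are precisely the hypotheses of (i) and (ii) respectively (and, applied with $X$ and $Y$ interchanged, they give the outer inequalities $(Y,X)\leq_{se}G(Y,X)$ and $G(Y,X)\leq_{se}(Y,X)$). The middle comparisons in (i) need only $X\leq_{se}Y$, i.e.\ $x\leq u$ and $v\leq y$, while in (ii) they follow from $X\leq_{se}Y$ together with the order preservation of $G$ from Corollary \ref{Th-Ahmad2} (or one direct application of $F(\uparrow,\downarrow)$), as you correctly indicated; the ``in particular'' substitutions $(x,y)=(v,u)$ are handled as you describe. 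So your route is the intended one, but the proof only goes through after fixing the definition of $\leq_{se}$ on the nested pairs.
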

\begin{proof}
For $X=(x,y)$ and $Y=(u,v),$ $G(X,Y)\leq_{se}(X,Y)$  is equivalent to $(F(x,y),u)\leq_{se} X$ and $Y\leq_{se} (F(u,v),x).$ Therefore,
\begin{equation*}\label{In-InequalitiesSec4a}
G(X,Y)\leq_{se}(X,Y)\qquad \Leftrightarrow\qquad
\begin{cases}
F(x,y)\leq x,\\
u\leq F(u,v),\\
y\leq u,\;\text{and}\; x\leq v
\end{cases}
\end{equation*}
and
\begin{equation*}\label{In-InequalitiesSec4b}
(X,Y)\leq_{se}G(X,Y)\qquad \Leftrightarrow\qquad
\begin{cases}
x\leq F(x,y),\\
F(u,v)\leq u,\\
v\leq x\;\text{and}\; u\leq y.\\
\end{cases}
\end{equation*}
All claims follow immediately.
\end{proof}

Our next corollary offers an alternative to corollary \ref{Th-Ahmad2}, but in higher dimension.

\begin{corollary} \label{Th-Ahmad} Let $F:\;\Omega \rightarrow [a,b]$ be a
continuous function such that $F(\uparrow,\downarrow)$,
and consider the symmetric map
$G:\;\Omega ^{2}\times \Omega ^{2}\rightarrow \Omega ^{2}\times \Omega^{2}$ defined by
\begin{equation*}\label{Eq-gBasedOnAhmad3}
G((X,Y),(U,V))=(((F(X),F(V)),X),((F(U),F(Y)),U)).
\end{equation*}
Then for each initial condition $(X,Y),(U,V)\in \Omega ^{2},$ the sequence
$\{G^{n}((X,Y),(U,V))\}$ either converges to a fixed point $((X^{\ast },X^{\ast
}),(U^{\ast },U^{\ast }))$ of $G$, or lies in an interval between two such fixed points.
\end{corollary}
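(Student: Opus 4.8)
The plan is to transcribe the proof of Corollary \ref{Th-Ahmad2} one level higher. We are in the setting of an invariant box, so $\Omega=[a,b]^2$. First I would introduce the auxiliary map
$$g:\Omega^2\times\Omega^2\rightarrow\Omega^2,\qquad g((X,Y),(U,V)) = ((F(X),F(V)),X),$$
which reads off only $X$ from its first argument and only $V$ from its second, and takes values in $\Omega\times\Omega=\Omega^2$ since $F(X),F(V)\in[a,b]$. A direct check shows that the map $G$ in the statement decomposes in the familiar symmetric form $G((X,Y),(U,V)) = \bigl(g((X,Y),(U,V)),\,g((U,V),(X,Y))\bigr)$, exactly as $(x,y)\mapsto(F(x,y),F(y,x))$ decomposes through $F$ in Corollary \ref{Prop-1}, and as the map of Corollary \ref{Th-Ahmad2} decomposes through its auxiliary map.

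Second, I would fix the nested partial orders. Equip $\Omega=[a,b]^2$ with $\leq_{se}$, so that $F(\uparrow,\downarrow)$ says precisely that $X\mapsto F(X)$ is $\leq_{se}$-nondecreasing; equip $\Omega^2$ with the product order $\preceq$ obtained by applying $\leq_{se}$ on each $\Omega$-factor (least element $((a,b),(a,b))$, greatest element $((b,a),(b,a))$); and equip $\Omega^2\times\Omega^2$ with the mixed order $(P,Q)\sqsubseteq(P',Q')$ iff $P\preceq P'$ and $Q'\preceq Q$. Each of these is a coordinatewise order on a cube after flipping signs of some coordinates, so $(\Omega^2\times\Omega^2,\sqsubseteq)$ is a closed subspace of an Archimedean Riesz space possessing a least element $m=\bigl(((a,b),(a,b)),((b,a),(b,a))\bigr)$ and a greatest element $M=\bigl(((b,a),(b,a)),((a,b),(a,b))\bigr)$; hence Proposition \ref{partialorder} will apply once $G$ is shown to be $\sqsubseteq$-order preserving.

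Third — and this is the step I expect to carry the actual (routine) work — I would verify that $g$ is $\preceq$-nondecreasing in its first argument and $\preceq$-nonincreasing in its second. The output $((F(X),F(V)),X)$ is assembled from the $\leq_{se}$-monotone block $F$, the order-preserving identity block $X\mapsto X$, the coordinate pairing, and a (harmless) constant dependence on the ignored variables $Y$ and $U$; propagating the $\leq_{se}$/reverse-$\leq_{se}$ orientations through these blocks gives $g(\uparrow,\downarrow)$. Then the standard observation used already in Corollaries \ref{Prop-1} and \ref{Th-Ahmad2} — namely that $(H,K)\mapsto(g(H,K),g(K,H))$ is $\sqsubseteq$-order preserving whenever $g(\uparrow,\downarrow)$ — shows $G$ is $\sqsubseteq$-order preserving.

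Finally, applying Proposition \ref{partialorder} with the elements $m$ and $M$, the chains $\{G^n(m)\}$ and $\{G^n(M)\}$ converge to fixed points $a^\ast$ and $b^\ast$ of $G$ with $a^\ast\sqsubseteq\liminf G^n\le\limsup G^n\sqsubseteq b^\ast$ for every initial condition $((X,Y),(U,V))\in\Omega^2\times\Omega^2$. Solving $G(H)=H$ forces $Y=X$ and $V=U$, so every fixed point has the stated form $((X^\ast,X^\ast),(U^\ast,U^\ast))$ (in fact $U^\ast$ is the coordinate-swap of $X^\ast$ and $(X_1^\ast,X_2^\ast)$ is a fixed point of $(x,y)\mapsto(F(x,y),F(y,x))$), whence $a^\ast=((X^\ast,X^\ast),(U^\ast,U^\ast))$; and since $G$ commutes with the swap $(P,Q)\mapsto(Q,P)$, which is an order-reversing involution exchanging $m$ and $M$, we get $b^\ast=((U^\ast,U^\ast),(X^\ast,X^\ast))$. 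Thus $\{G^n((X,Y),(U,V))\}$ either converges to the fixed point $a^\ast=b^\ast$, or is squeezed into the interval $[a^\ast,b^\ast]_{\sqsubseteq}$ between two such fixed points. The only genuine subtlety is the orientation bookkeeping in the third step; the rest is a direct lift of the lower-dimensional arguments.
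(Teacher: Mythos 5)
Your proof is correct and follows essentially the same route as the paper: you decompose $G$ through the auxiliary map $g((X,Y),(U,V))=((F(X),F(V)),X)$, verify $g(\uparrow,\downarrow)$ for the nested (southeast/product) orders, and invoke Proposition \ref{partialorder}, exactly as in the paper's proof. Your added details -- the explicit computation showing fixed points force $Y=X$, $V=U$ with $U^{\ast}$ the swap of $X^{\ast}$ solving $(F(x,y),F(y,x))=(x,y)$, and the swap-involution argument identifying $b^{\ast}$ as the swap of $a^{\ast}$ -- are accurate refinements of what the paper leaves to the reader.
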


\begin{proof}
The proof follows the same lines once it is shown that
$G(\uparrow)$ with respect to $\leq_{se}$ on $\Omega^4 = \Omega^2\times\Omega^2.$
This can be done as follows: give $\Omega$  and $\Omega^4$  the southeast
ordering $\leq _{se},$ and give $\Omega^2$ the northeast ordering $\leq _{ne}.$ Define the maps
\begin{equation*}\label{Eq-gBasedOnAhmad1}
g_1:\; \Omega \times \Omega \rightarrow \Omega   \quad \text{as}\quad g_1((x,y),(u,v))=(F(x,y),F(u,v)).
\end{equation*}
and
\begin{equation*}\label{Eq-gBasedOnAhmad2}
g:\Omega ^{2}\times \Omega ^{2}\rightarrow \Omega ^{2} \quad \text{as}\quad g((X,Y),(U,V))=(g_1(X,V),X).
\end{equation*}
We claim that $g_1(\uparrow,\downarrow)$ and $g(\uparrow,\downarrow).$
Consider $X^{\prime }=(x^{\prime
},y^{\prime })$ and let $(X,Y),(X^{\prime },Y)\in \Omega \times \Omega $
such that $X\leq _{se}X^{\prime }.$ Then
\[
g_1(X,Y)=(F(X),F(Y))\;\text{ and }\;g_1(X^{\prime },Y)=(F(X^{\prime }),F(Y)).
\]%
Since $x\leq x^{\prime }$ and $y^{\prime }\leq y$, we obtain
$$
F(X)=F(x,y)\leq F(x^{\prime },y)\leq F(x^{\prime },y^{\prime })=F(X^{\prime
}),
$$
which implies $g_1(X,Y)\leq _{se}g_1(X^{\prime },Y).$  Next, consider $Y^{\prime
}=(u^{\prime },v^{\prime }),$ and let $(X,Y),(X,Y^{\prime })\in \Omega
\times \Omega $ such that $Y\leq _{se}Y^{\prime }$, then $u\leq u^{\prime
},v\geq v^{\prime }$. Thus,
\[
F(Y)=F(u,v)\leq F(u^{\prime },v)\leq F(u^{\prime },v^{\prime })=F(Y^{\prime
}),
\]%
which implies $g_1(X,Y^{\prime })\leq _{se}g_1(X,Y).$
Similarly one shows that $g(\uparrow,\downarrow)$, from which we deduce that $G(\uparrow)$. Finally it is straightforward to find that the fixed
points of $G$ are obtained from the solutions of $(F(x, y), F(y, x)) = (x, y)$. The rest of the claim follows
from Proposition \ref{partialorder}.
\end{proof}

We close this section by a theorem that summarizes the conclusion of our analysis.
\begin{theorem}\label{Th-Main}
Let $\Omega$ be a compact subset of $\mathbb{R}^2$ and $z=F(x,y)$ be continuous on $\Omega.$ Suppose that $F(\uparrow,\downarrow).$
If $F$ has a nice extension $\widetilde{F}$ over a rectangular domain containing $\Omega$ such that  $\widetilde{F}$ has no artificial fixed points, then for all  $(x_0,x_{-1})\in \Omega,$ $x_{n+1}=F(x_n,x_{n-1})$ must converge to a fixed point of $F.$
\end{theorem}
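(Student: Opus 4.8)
The plan is to chain together the extension results of Section~2 with the embedding machinery of Section~3. First I would invoke Theorem~\ref{Th-ConvexCase} or the Semi-Convex Case theorem (whichever applies to the given $\Omega$) — or simply use the hypothesis that a nice extension $\widetilde F$ exists — to replace $F$ by $\widetilde F:[a,b]^2\to[a,b]$ satisfying (C1)--(C3), with $\Omega\subseteq[a,b]^2$ and with the property that $\widetilde F$ has no artificial fixed points. Because $\Omega$ is invariant for the recursion $x_{n+1}=F(x_n,x_{n-1})$ and $\widetilde F$ agrees with $F$ on $\Omega$, every orbit of $x_{n+1}=F(x_n,x_{n-1})$ starting in $\Omega$ is also an orbit of $x_{n+1}=\widetilde F(x_n,x_{n-1})$; so it suffices to prove convergence to a fixed point for the extended system on the box, and to note that the fixed points of $\widetilde F$ lying in $\Omega$ are exactly the fixed points of $F$ (since $\widetilde F$ is a \emph{nice} extension, its range equals that of $F$, and an equilibrium $(\xi,\xi)$ of $\widetilde F$ has $\xi=\widetilde F(\xi,\xi)\in\operatorname{range}F$; in any case a standard argument confines the orbit to $\Omega$).

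Next I would apply the embedding of Corollary~\ref{Th-Ahmad2}: form $G:\Omega\times\Omega\to\Omega\times\Omega$, $G((x,y),(u,v))=((\widetilde F(x,y),u),(\widetilde F(u,v),x))$, which by that corollary is order preserving for $\leq_{se}$, so that for any $((x_0,x_{-1}),(x_0,x_{-1}))$ the orbit $\{G^n\}$ either converges to a fixed point of $G$ or is trapped in an interval $[(X^*,Y^*),(Y^*,X^*)]_{se}$ with endpoints fixed points of $G$. The key algebraic observation is that the diagonal copy of $F$-orbits embeds: if $X_n:=(x_n,x_{n-1})$ with $x_{n+1}=\widetilde F(x_n,x_{n-1})$, then $G(X_n,X_n)=(X_{n+1},X_{n+1})$, so the $F$-orbit sits inside the $G$-orbit through $(X_0,X_0)$. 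Then I would identify the fixed points of $G$: $((x,y),(u,v))$ is fixed iff $\widetilde F(x,y)=x$, $u=y$, $\widetilde F(u,v)=u$, $v=x$, i.e. $y=u$, $v=x$, $\widetilde F(x,y)=x$, $\widetilde F(y,x)=y$ — precisely the solutions of $(\widetilde F(x,y),\widetilde F(y,x))=(x,y)$. By the no-artificial-fixed-points hypothesis these force $x=y$, so every fixed point of $G$ has the form $((\xi,\xi),(\xi,\xi))$ with $\xi$ a fixed point of $\widetilde F$ (hence of $F$).

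The crucial consequence is that the trapping interval collapses: if the $G$-orbit is caught between fixed points $((X^*,Y^*))$ and $((Y^*,X^*))$, then since all fixed points of $G$ are of the diagonal form above, $X^*=Y^*=(\xi,\xi)$ for a single $\xi$, so the interval is the single point $((\xi,\xi),(\xi,\xi))$; thus in every case $G^n((X_0,X_0))\to((\xi,\xi),(\xi,\xi))$. Projecting onto the first coordinate of the first block gives $X_n=(x_n,x_{n-1})\to(\xi,\xi)$, hence $x_n\to\xi$ with $F(\xi,\xi)=\xi$, which is the claim. To run this one needs the $G$-orbit through $(X_0,X_0)$ to actually lie in $\Omega\times\Omega$ so that Corollary~\ref{Th-Ahmad2} applies with the box $[a,b]^2$; this follows because the relevant $\widetilde F$-values stay in $[m,M]\subseteq[a,b]$, and more pointedly the squeezing inequalities of Proposition~\ref{Pr- LipschitzG2} (applied on the diagonal, where $(x,y)=(v,u)$ and $F(x,y)\le x\le y\le F(y,x)$ or the reverse) confine the orbit and show the $F$-orbit is squeezed between a decreasing and an increasing $G$-orbit with a common limit.

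The main obstacle I expect is not the order-theoretic collapse — that is immediate once the fixed-point calculation is in hand — but rather the bookkeeping at the start: justifying that passing from $F$ on $\Omega$ to $\widetilde F$ on $[a,b]^2$ is harmless, i.e. that orbits of the $F$-recursion starting in $\Omega$ never ``feel'' the extension (they do not, since $\Omega$ is invariant and $\widetilde F\equiv F$ on $\Omega$), and that the limit $\xi$ lands in $\Omega$ so it is genuinely a fixed point of $F$ and not merely of $\widetilde F$ — here the ``nice'' (range-preserving) property of the extension, together with compactness of $\Omega$ and closedness of the $F$-orbit's limit set inside $\Omega$, is exactly what is needed.
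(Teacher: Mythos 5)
Your proposal is correct and follows essentially the route the paper intends: Theorem \ref{Th-Main} is stated there as a summary of Section 3, and its justification is exactly your assembly of the nice-extension hypothesis, Corollary \ref{Th-Ahmad2} (with the diagonal identity $G(X_n,X_n)=(X_{n+1},X_{n+1})$ for $X_n=(x_n,x_{n-1})$), the observation that fixed points of $G$ are the solutions of $(\widetilde F(x,y),\widetilde F(y,x))=(x,y)$ so that absence of artificial fixed points collapses the trapping interval, and invariance plus closedness of $\Omega$ to conclude the limit is a genuine fixed point of $F$. The only cosmetic remark is that the range-preserving (``nice'') property is not what places the limit in $\Omega$ — invariance and compactness do that, as you note at the end.
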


\section{Examples}  \label{Sec-Application}
To apply our developed theory on maps of mixed monotonicity, a compact invariant region has to be found, then an extension map $\widetilde{F}$ on a rectangular domain has to be constructed. If $\widetilde{F}$ has no artificial fixed points, i.e., the solution of $(\widetilde{F}(x,y),\widetilde{F}(y,x))=(x,y)$ gives the same fixed points as those of $F,$ then every orbit of $F$ converges to a fixed point of $F.$ To achieve all of this, examples can be involved and lengthy; however, since our purpose here is to give illustrative examples of the developed theory in the previous sections, we give two relatively short examples. The first has an invariant rectangular domain in which the known theory can be used to prove global stability. The second lacks a known invariant rectangular domain, but a compact invariant domain is found. In this case, we apply our developed approach.

\subsection{First example}
Consider the difference equation
\begin{equation}\label{Eq-SecondApplication}
y_{n+1}=F(y_n,y_{n-1})=\frac{p+qy_n}{1+y_n+ry_{n-1}},\quad\text{where}\;\;0<p\leq q \;\;\text{and}\;\; r>0
\end{equation}
and the initial conditions are non-negatives. This problem has been discussed in the literature in which partial stability results have been obtained for specific choices of the parameters \cite{Ku-La2002,Ca-La2008,Am-Ca-La2007,Am-Ca-La2008,Ku-La-Ma-Ro2003}. We are not interested here in writing a survey about the obtained results in the literature, but rather, we are interested in illustrating how to apply the extension-embedding approach here. Note that $F$ is increasing in its first argument and decreasing in its second one within the range of our parameters. Furthermore, $F$ has a unique positive fixed point $\bar y,$ which is less than $q.$ In fact, this positive fixed point is locally asymptotically stable for all values of the parameters.  Now, we show the existence of an invariant compact domain. Since
$$0<F(x,y)\leq \frac{q(1+x)}{1+x+ry}<q,$$
then the orbits of Eq. (\ref{Eq-SecondApplication}) enter the rectangular region $\mathcal{S}:=[0,q]\times [0,q]$ and stay. Thus, $\mathcal{S}$ serves as a compact invariant rectangular domain of $T(x,y)=(F(x,y),x).$ In this case, no need for an extension of $F$ since the embedding theory of Smith \cite{Sm2006,Sm2008} or the global stability results of Kulenovic and Merino \cite{Ku-Me2006} can be applied. Next, we investigate the solution of the system
\begin{equation}\label{Eq-SolveSystem}
F(x,y)=x,\quad F(y,x)=y\quad \text{and}\quad (x,y)\in \mathcal{S}.
\end{equation}
Obviously, $x=y=\bar y$ is a solution, which is a fixed point of $F.$ We need to exclude the existence of artificial fixed points.
\begin{proposition}\label{Pr-Application2}
Consider the parameters $p,q$ and $r$ as given in Eq. (\ref{Eq-SecondApplication}). If one of the following cases is satisfied, then the system in (\ref{Eq-SolveSystem}) has no artificial fixed points in $\mathcal{S}.$

(i)  $q\leq 1$ \hfill (ii) $0\leq r\leq 1$ \hfill (iii) $r>1$ and $p>\frac{1}{4}(r-1)(q-1)^2.$
\end{proposition}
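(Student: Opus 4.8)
The plan is to clear denominators in \eqref{Eq-SolveSystem} and exploit the symmetry of the resulting polynomial system. Since $1+x+ry\ge 1>0$ on $\mathcal{S}$, the equation $F(x,y)=x$ is equivalent to
$$x^{2}+rxy+(1-q)x-p=0,$$
and likewise $F(y,x)=y$ is equivalent to $y^{2}+rxy+(1-q)y-p=0$. Subtracting these two equations gives $(x-y)\bigl(x+y+1-q\bigr)=0$. Hence every \emph{artificial} fixed point, i.e. every solution with $x\neq y$, must lie on the line $x+y=q-1$ inside $\mathcal{S}$; this single reduction is what drives all three cases.

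Case (i) is then immediate: if $q\le 1$, then $x+y=q-1\le 0$ is incompatible with $x,y\ge 0$ unless $x=y=0$, which is not artificial, so no artificial fixed point exists. For cases (ii) and (iii) one may assume $q>1$ (otherwise case (i) already gives the conclusion). I would then substitute $y=q-1-x$ into the first polynomial equation and collect terms to obtain the single quadratic
$$(1-r)x^{2}+(r-1)(q-1)x-p=0 ,$$
with $x$ confined to $[0,q-1]$ because $x\ge 0$ and $y=q-1-x\ge 0$.

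For case (ii), $0<r\le 1$, rewrite the quadratic as $(1-r)\,x\bigl(x-(q-1)\bigr)=p$; on the interval $[0,q-1]$ the left-hand side is $\le 0$ (the factor $x(x-(q-1))$ is nonpositive there and $1-r\ge 0$), while $p>0$, a contradiction, so no artificial fixed point exists. For case (iii), $r>1$, the quadratic has positive leading coefficient $r-1$ and discriminant $(r-1)\bigl[(r-1)(q-1)^{2}-4p\bigr]$, which is strictly negative precisely when $p>\frac{1}{4}(r-1)(q-1)^{2}$; hence the quadratic has no real root, and again there is no artificial fixed point in $\mathcal{S}$.

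All the computations are elementary, so I do not expect a substantive obstacle; the main care is organizational. One must keep track of the degenerate sub-cases ($r=1$, $q=1$, the corner $(0,0)$), use the nonnegativity and range constraints defining $\mathcal{S}$ to discard roots produced by the algebra, and arrange the case split so that the ``line $x+y=q-1$'' step, the sign argument for $r\le 1$, and the discriminant argument for $r>1$ dovetail cleanly.
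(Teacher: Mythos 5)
Your proof is correct and follows essentially the same route as the paper: subtracting the two cleared equations forces artificial fixed points onto the line $x+y=q-1$ (settling case (i)), and substituting $y=q-1-x$ yields the quadratic $(r-1)x^{2}-(r-1)(q-1)x+p=0$ (the same factor the paper obtains by eliminating $y$), disposed of by a sign argument for $r\leq 1$ and the discriminant condition for $r>1$. The only nit is cosmetic: having written the quadratic with leading coefficient $1-r$, you refer to its leading coefficient as $r-1$; the discriminant computation and the conclusion are unaffected.
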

\begin{proof}
(i) Since $y-x=F(y,x)-F(x,y)$ implies $x+y=q-1,$  solutions $(x,y)$ have to be located on the line $x+y=q-1,$ which does not go through the positive quadrant. To verify the second case, we eliminate $y$ from $F(x,y)=x$ and $F(y,x)=y$ to obtain
$$\left[(r-1)x^2-(r-1)(q-1)x+p\right]\left[(-r-1)x^2+(q-1)x+p\right]=0. $$

The factor on the right gives the fixed point $x=y=\bar y.$ The factor on the left gives the artificial fixed points. Thus, we focus on the solutions of $(r-1)x^2-(r-1)(q-1)x+p=0.$ When $r=1,$ we obtain no solution. When $0<r<1,$ we obtain a negative solution and a positive one. Thus, the artificial fixed points are in the second and fourth quadrants. To verify Part (iii), observe the restrictions give non-real roots.
\end{proof}

Now, we apply Theorem \ref{Th-Main} to obtain the following global stability result.

\begin{corollary}
Consider Eq. (\ref{Eq-SecondApplication}) and assume that one of the restrictions in Proposition \ref{Pr-Application2} is satisfied, then the positive equilibrium $(\bar y,\bar y)$ is globally asymptotically stable with respect to the positive quadrant.
\end{corollary}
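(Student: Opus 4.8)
The plan is to observe that, for Eq.~\eqref{Eq-SecondApplication}, the hypotheses of Theorem~\ref{Th-Main} are already met on the rectangle $\mathcal{S}=[0,q]^2$ itself, so no genuine extension is needed and the only new ingredient is the absence of artificial fixed points supplied by Proposition~\ref{Pr-Application2}.

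First I would assemble the structural facts about $F$. A direct sign check of the partial derivatives shows that, for the stated range $0<p\le q$, $r>0$, $F$ is continuous on the positive quadrant with $F(\uparrow,\downarrow)$. The bound $0<F(x,y)\le q$ (already noted in the text) shows that $\mathcal{S}$ is forward invariant under $T(x,y)=(F(x,y),x)$ and, moreover, absorbing: since $y_n>0$ and $y_n\le q$ for $n\ge 1$, one has $(y_n,y_{n-1})\in\mathcal{S}$ for every $n\ge 2$, so it suffices to treat initial data in $\mathcal{S}$. Finally, $F$ has a unique nonnegative fixed point, since $F(x,x)=x$ reduces to the quadratic $(1+r)x^2+(1-q)x-p=0$, whose product of roots is $-p/(1+r)<0$; hence there is exactly one positive root $\bar y$ and no other nonnegative fixed point (in particular $F(0,0)=p>0$, so the origin is not fixed), and $\bar y<q$ so $(\bar y,\bar y)\in\mathcal{S}$.

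Next, take $\widetilde F:=F$ restricted to $\mathcal{S}$; since $\mathcal{S}$ is already rectangular, $\widetilde F$ is trivially a nice extension of $F$ on $\mathcal{S}$ in the sense of Definition~\ref{defext}. Under whichever of the three hypotheses of Proposition~\ref{Pr-Application2} one assumes, the system $(\widetilde F(x,y),\widetilde F(y,x))=(x,y)$ has no solution in $\mathcal{S}$ other than $(\bar y,\bar y)$, i.e.\ $\widetilde F$ has no artificial fixed points. Applying Theorem~\ref{Th-Main} with $\Omega=\mathcal{S}$ then shows that every solution of $x_{n+1}=F(x_n,x_{n-1})$ with $(x_0,x_{-1})\in\mathcal{S}$ converges to a fixed point of $F$, which by the uniqueness just established must be $\bar y$. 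By the absorbing property, the same conclusion $y_n\to\bar y$ holds for every solution starting in the positive quadrant, so $(\bar y,\bar y)$ is a global attractor there; combined with the local asymptotic stability of $\bar y$ recalled in the text, this gives global asymptotic stability with respect to the positive quadrant.

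The computations involved — the monotonicity sign check, the invariance/absorption estimate, and the quadratic algebra — are routine. The only point deserving attention is the passage from Theorem~\ref{Th-Main}, which controls orbits that \emph{start} inside the invariant box, to the assertion over the whole positive quadrant; this is precisely what the absorbing remark handles, and the final upgrade from global attraction to global asymptotic stability uses only the already-quoted local stability of $\bar y$. Neither step presents a real obstacle.
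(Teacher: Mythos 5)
Your proposal is correct and follows essentially the same route as the paper: the paper likewise notes that $\mathcal{S}=[0,q]^2$ is an absorbing invariant box (so the identity extension suffices and no genuine extension is needed), invokes Proposition \ref{Pr-Application2} to rule out artificial fixed points, and applies Theorem \ref{Th-Main} together with the previously stated local asymptotic stability and uniqueness of the positive equilibrium. Your explicit treatment of the absorption step and the quadratic uniqueness argument merely spells out details the paper leaves implicit.
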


\subsection{Second example}
Consider the function $F(x,y)=\frac{p+qx}{1+x+ry}-h,\quad q\geq p>h>0$ and $r>0,$  i.e., we consider the difference equation
\begin{equation}\label{Eq-Example2}
x_{n+1}=F(x_n,x_{n-1})=\frac{p+qx_n}{1+x_n+rx_{n-1}}-h,\quad \text{where} \; q\geq p>h>0
\end{equation}
and $ n=0,1,\ldots,x_{-1},x_0\geq 0.$
This equation has  a rich dynamics to the extent that well-defined solutions are not obvious anymore. Since our objective here is to give an illustrative example, we limit our attention to $r=1$ and $q=2p.$ In this case, we have a unique positive equilibrium given by $x^*=p-h.$ The next result characterizes local stability.

\begin{lemma}\label{Eq-LAS}
Consider Eq. (\ref{Eq-Example2}) with $r=1,q=2p$ and $0<h<\min\left\{p,\frac{p+1}{2}\right\}.$ The unique positive equilibrium of Eq. (\ref{Eq-Example2}) is locally asymptotically stable.
\end{lemma}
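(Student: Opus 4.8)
The plan is to prove local asymptotic stability by linearizing Eq.~(\ref{Eq-Example2}) about its positive equilibrium and applying the standard Schur--Cohn (Jury) conditions for second order linear difference equations. First I would pin down the equilibrium: with $r=1$ and $q=2p$, solving $F(x,x)=x$ and clearing denominators yields the quadratic $2x^{2}+(1-2p+2h)\,x+(h-p)=0$. One checks directly that $x=p-h$ is a root, and since the product of the two roots equals $(h-p)/2$, the other root is $-\tfrac12<0$. Hence $x^{*}=p-h$ is the unique nonnegative equilibrium, and it is strictly positive because $h<p$. I would also record here that the hypothesis $h<\min\{p,\tfrac{p+1}{2}\}$ forces $1+2x^{*}=1+2p-2h>0$, so $F$ is smooth near $(x^{*},x^{*})$ and the linearization below is legitimate.

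Next I would compute the linearization. Writing $F(x,y)=\dfrac{p+2px}{1+x+y}-h$, differentiation gives $F_{x}(x,y)=\dfrac{p+2py}{(1+x+y)^{2}}$ and $F_{y}(x,y)=\dfrac{-(p+2px)}{(1+x+y)^{2}}$. Using the equilibrium identity $\dfrac{p+2px^{*}}{1+2x^{*}}=x^{*}+h=p$, equivalently $p+2px^{*}=p(1+2x^{*})$, both partials collapse at $(x^{*},x^{*})$ to
\[
a:=F_{x}(x^{*},x^{*})=\frac{p}{1+2p-2h}>0,\qquad b:=F_{y}(x^{*},x^{*})=-\frac{p}{1+2p-2h}=-a,
\]
so the linearized equation is $z_{n+1}=a z_{n}-a z_{n-1}$, with characteristic polynomial $\lambda^{2}-a\lambda+a$.

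Finally I would invoke the criterion: the zero solution of $z_{n+1}=a z_{n}+b z_{n-1}$ is asymptotically stable iff $|a|<1-b$ and $|b|<1$. With $b=-a$ and $a>0$ the first inequality becomes $a<1+a$, which always holds, while the second becomes $a<1$, i.e. $\dfrac{p}{1+2p-2h}<1$, i.e. $h<\tfrac{p+1}{2}$. Together with $h<p$ (used only for the positivity of $x^{*}$), the hypothesis $0<h<\min\{p,\tfrac{p+1}{2}\}$ therefore delivers local asymptotic stability; equivalently, since $a<4$ on this parameter range, the roots of $\lambda^{2}-a\lambda+a$ form a complex conjugate pair of modulus $\sqrt{a}$, and $\sqrt{a}<1$ is again exactly $h<\tfrac{p+1}{2}$. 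I do not expect a real obstacle here, as the whole argument is a routine linearization; the only points needing care are confirming that $x^{*}=p-h$ is the \emph{unique} positive equilibrium (the sign of the second root), checking $1+2p-2h>0$ throughout so the derivatives are well defined, and citing the Schur--Cohn conditions in the correct form.
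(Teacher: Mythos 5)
Your proof is correct and follows essentially the same route as the paper: linearize at $x^*=p-h$, note that the Jacobian data reduce to the single quantity $p/(1+2p-2h)$ (the paper's trace $=$ determinant, your $a=-b$), and apply the Schur--Cohn conditions, which collapse to $p/(1+2p-2h)<1$, i.e. $h<\tfrac{p+1}{2}$, with $h<p$ giving positivity of the equilibrium. You merely spell out details the paper leaves implicit (uniqueness of the positive root and the automatic trace inequality), so there is nothing to correct.
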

\begin{proof}
The Jacobian matrix of $F$ at $x^*=p-h$ has trace $T=\frac{p}{1+2x^*}$ and determinant $D=T.$ Thus,  $h<p$ makes $x^*$ positive and $D<1$ makes it locally asymptotically stable.
\end{proof}

When $h=\frac{1}{2},$ F has an infinite number of artificial fixed points, which makes this case a special case that needs different type of treatment. Therefore, we proceed with the assumption $0<h<\frac{1}{2}.$ In the next result, we establish a compact invariant domain.

\begin{lemma}\label{Lem-InvariantDomain}
Consider Eq. (\ref{Eq-Example2}) with $q=2p,r=1$ and $0<h<\min\{p,\frac{1}{2}\}.$ Let $c=\frac{x^*}{h}(x^*+p+1).$ The compact region bounded by the octagon of vertices $A_0:(c,c),$ $A_1:(x^*,c),$ $A_2:(0,x^*),$ $ A_3:(0,0)$ and $A_4:(c,0)$ forms an invariant domain.
\end{lemma}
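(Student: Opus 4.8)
The plan is a direct invariance check for the map $T(x,y)=(F(x,y),x)$. With $r=1$ and $q=2p$ we have $F(x,y)=\frac{p(1+2x)}{1+x+y}-h$, and since $\partial_x F=\frac{p(1+2y)}{(1+x+y)^2}>0$ and $\partial_y F<0$ on $[0,\infty)^2$, $F$ satisfies $F(\uparrow,\downarrow)$; this is what will let us replace two‑variable estimates over $\Omega$ by one‑variable estimates along $\partial\Omega$. Let $\Omega$ be the closed region $A_0A_1A_2A_3A_4$: geometrically it is the square $[0,c]^2$ with the open corner triangle $\Delta$ of vertices $(0,c)$, $A_1=(x^*,c)$, $A_2=(0,x^*)$ removed, so $(x,y)\in\Omega$ iff $0\le x,y\le c$ and, in addition, $y\le x^*+\tfrac{c-x^*}{x^*}x$ whenever $x<x^*$. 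First I would record two identities forced by the parameter choice: $F(z,z)=p-h=x^*$ for every $z$ (hence $F(c,c)=x^*$), and, using $c=\frac{x^*(x^*+p+1)}{h}$ with $x^*=p-h$, the identity $F(x^*,c)=0$.

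Next I would reduce invariance to three scalar conditions. Since the second coordinate of $T(x,y)$ equals $x\in[0,c]$, which is automatically an admissible second coordinate for a point of $\Omega$, the point $(F(x,y),x)$ lies in $\Omega$ precisely when: (a) $F\ge 0$ on $\Omega$; (b) $F\le c$ on $\Omega$; and (c) $F(x,y)\ge\frac{x^*(x-x^*)}{c-x^*}$ whenever $(x,y)\in\Omega$ with $x>x^*$ (this is exactly the requirement that the image miss $\Delta$; for $x\le x^*$ it follows from (a)). Now $F(\uparrow,\downarrow)$ collapses each of these, handling the boundary pieces of $\Omega$ separately because $\Omega$ is not a box. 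For (b), $F$ attains its maximum over $\Omega$ at the south‑east vertex $A_4=(c,0)$, so (b) becomes $F(c,0)\le c$. For (a): the naive bound $F(x,y)\ge F(0,c)$ is useless, since $F(0,c)=\frac{p}{1+c}-h$ can be negative — which is precisely why the corner $\Delta$ must be excised; instead, on $\{x\ge x^*\}\cap\Omega$ one has $\min_y F(x,y)=F(x,c)$, increasing in $x$, hence $\ge F(x^*,c)=0$, while on $\{x\le x^*\}\cap\Omega$ one has $\min_y F(x,y)=F\!\left(x,\;x^*+\tfrac{c-x^*}{x^*}x\right)$, and after clearing denominators ``$\ge 0$'' becomes an affine inequality in $x$ which holds because its left‑minus‑right side is strictly positive at $x=0$ and $0$ at $x=x^*$. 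For (c): set $g(x)=F(x,c)-\frac{x^*(x-x^*)}{c-x^*}$; then $g(x^*)=0$, $g(c)=F(c,c)-x^*=0$, and $x\mapsto F(x,c)$ is concave, so $g$ is concave and vanishes at both endpoints of $[x^*,c]$, whence $g\ge 0$ there.

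The main obstacle is (b). The inequality $F(c,0)\le c$, after clearing the denominator $1+c$, is the quadratic inequality $c^2+(1+h-2p)c+(h-p)\ge 0$ evaluated at the specific value $c=\frac{x^*(x^*+p+1)}{h}$. I would substitute and set $A:=x^*+p+1=2p-h+1$, which reduces it to $A^2(p-2h)+2hA-h^2\ge 0$. This is immediate when $p\ge 2h$. When $h<p<2h$, put $\delta:=2h-p\in(0,h)$, note that the parameters are linked by $A=3h+1-2\delta$, and rewrite the inequality as $\delta A^2-2hA+h^2\le 0$; its roots in $A$ are $\frac{h(1\pm\sqrt{1-\delta})}{\delta}$, and one checks $A>h>\frac{h(1-\sqrt{1-\delta})}{\delta}$ together with $A\le\frac{h(1+\sqrt{1-\delta})}{\delta}$, the latter reducing via $\delta<h$ to $\sqrt{1-\delta}\ge\delta$, which holds since $\delta<\tfrac12$. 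With (a)--(c) established, $T(\Omega)\subseteq\Omega$, which is the lemma; moreover (a) incidentally shows that orbits starting in $\Omega$ stay bounded away from the singular set and hence remain well defined. The remaining pieces — the identity $F(x^*,c)=0$, the affine bound in (a), the concavity in (c) — are routine; the only thing that needs care throughout is that, $\Omega$ being a square with a corner removed rather than a genuine rectangle, the standard ``evaluate $F$ at opposite corners'' device for $F(\uparrow,\downarrow)$ maps must be applied piecewise along $\partial\Omega$.
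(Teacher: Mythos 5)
Your proposal is correct in structure and takes a genuinely different route from the paper. The paper proves invariance by observing that $T(x,y)=(F(x,y),x)$ is one-to-one on the positive quadrant and then checking that the images $T(\gamma_j)$ of the five boundary edges lie in $\Omega$ (a boundary-to-interior argument via injectivity), the decisive computations being $T(x^*,c)=(0,x^*)$, a convexity claim for the image of the top edge, and the inequality $(c-x^*)(1+c)-pc>0$. You dispense with injectivity altogether: $F(\uparrow,\downarrow)$ reduces $T(\Omega)\subseteq\Omega$ to the three scalar conditions (a) $F\ge 0$ on $\Omega$, (b) $F(c,0)\le c$, and (c) $F(x,c)\ge \frac{x^*(x-x^*)}{c-x^*}$ on $[x^*,c]$; your reduction is valid (the image's second coordinate is automatically admissible, and missing the excised triangle is exactly (c) for $x>x^*$ and follows from (a) for $x\le x^*$). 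Note these are the same analytic facts the paper uses: $F(x^*,c)=0$ anchors (a), your concavity argument for (c) is a clean version of the paper's ``the curve is convex,'' and (b) is literally the paper's inequality, since $F(c,0)=x^*+\frac{pc}{1+c}$. Your verifications of (a), (c), and of the identities $F(z,z)=x^*$ and $F(x^*,c)=0$, and your substitution turning (b) into $A^2(p-2h)+2hA-h^2\ge 0$ with $A=2p-h+1$, all check out. What your approach buys is a proof that avoids the Jordan-curve/injectivity step entirely; what the paper's buys is that only the boundary parametrizations need to be computed.

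The one step that does not hold up as written is the endgame of (b) in the regime $h<p<2h$: the claim that $A\le\frac{h(1+\sqrt{1-\delta})}{\delta}$ ``reduces via $\delta<h$ to $\sqrt{1-\delta}\ge\delta$'' is not a valid derivation. The natural reading (replacing $h/\delta$ by $1$) would require $A\le 1+\sqrt{1-\delta}$, which fails, e.g., at $h=0.45$, $\delta=0.01$ (there $A=2.33$ while $1+\sqrt{1-\delta}\approx 1.99$). The inequality you need, $\delta A^2-2hA+h^2\le 0$ for $0<\delta<h<\tfrac12$, is nevertheless true, and a short correct argument is: rewrite it as $(3h+1-2\delta)\delta\le h\left(1+\sqrt{1-\delta}\right)$; the left side, as a function of $\delta$, is increasing on $(0,h)$ because the parabola's vertex $\frac{3h+1}{4}$ exceeds $h$, hence it is at most $h(h+1)$, while the right side is at least $h\left(1+\sqrt{1-h}\right)$, and $h\le\sqrt{1-h}$ since $h<\tfrac12<\frac{\sqrt{5}-1}{2}$. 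With that repair (or with the paper's own positivity computation for $(c-x^*)(1+c)-pc$), your proof is complete.
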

\begin{proof}
Let $\Omega$ be the compact region bounded by the boundary curves $\gamma_j(t)=(1-t)A_j+tA_{j+1 \bmod 5},\; j=0,\ldots, 4$ and $0\leq t\leq 1.$ We show that $\Omega$ is invariant under $T(x,y)=(F(x,y),x).$ Since $T$ is one-to-one on the positive quadrant, it is sufficient to show that $T(\gamma_j) \subset \Omega.$  Observe that $x^*<\frac{x^*}{h}<c.$ The rest of the proof is computational, and we give the main outlines here. We have $ T(\gamma_0)=T(c+t(x^*-c),c)$ with $T(\gamma_0(0))=A_1$ and $T(\gamma_0(1))=A_2.$ Since also the curve is convex, we obtain $T(\gamma_0)\subset\Omega.$
$T(\gamma_1)=T((1-t)x^*,(1-t)c+tx^*),$ which is increasing in $x,$ decreasing in $y,$ and satisfies $T(\gamma_1(0))=A_2,$ $T(\gamma_1(1))=\left(\frac{p}{x^*+1}-h,0\right).$ Thus, $T(\gamma_1) \subset \Omega.$  $T(\gamma_2)=\left(\frac{p}{1+(1-t)x^*}-h,0\right)\subset \gamma_3.$
$$T(\gamma_3)=T(tc,0)=\left(\frac{p(1+2ct)}{1+ct}-h,ct\right).$$
The second component increases from $0$ to $c,$ while the first component increases from $x^*$ to $x^*+\frac{pc}{1+c}.$ It remains to show that
 $x^*+\frac{pc}{1+c}<c.$ We have
 $$ (c-x^*)(1+c)-pc>0 \quad \Leftrightarrow\quad \frac{x^*}{h^2}\left(4p^2(p-3h+1)+(1-3h)^2p+h^2(1-2h)\right)>0,$$
 which holds true when $0<h<\{p,\frac{1}{2}\}.$
  Finally $T(\gamma_4)\subset \gamma_0.$
Hence, the proof is complete (Fig. \ref{Fig-CompactInvariant} gives an illustration).
\end{proof}
Next, we consider the invariant region $\Omega$ as the domain of $F,$ and construct a nice extension $\widetilde{F}$ on the rectangular region $\mathcal{S}=[0,c]^2.$ Since $F(\gamma_1(t))$ is increasing when $0<t<1$ and $0<h<\frac{1}{2},$  our nice extension will be
$$\widetilde F(x,y) = \begin{cases} F(x,y),&\text{if}\; (x,y)\in \Omega\\
F(x^+,y),& \text{if}\; (x,y)\in \mathcal{S}\setminus \Omega,
\end{cases}$$
where $x^+=\frac{(y-x^*)x^*}{(c-x^*)}.$ Next, we show that $\widetilde{F}$ has no artificial fixed points.

\begin{center}
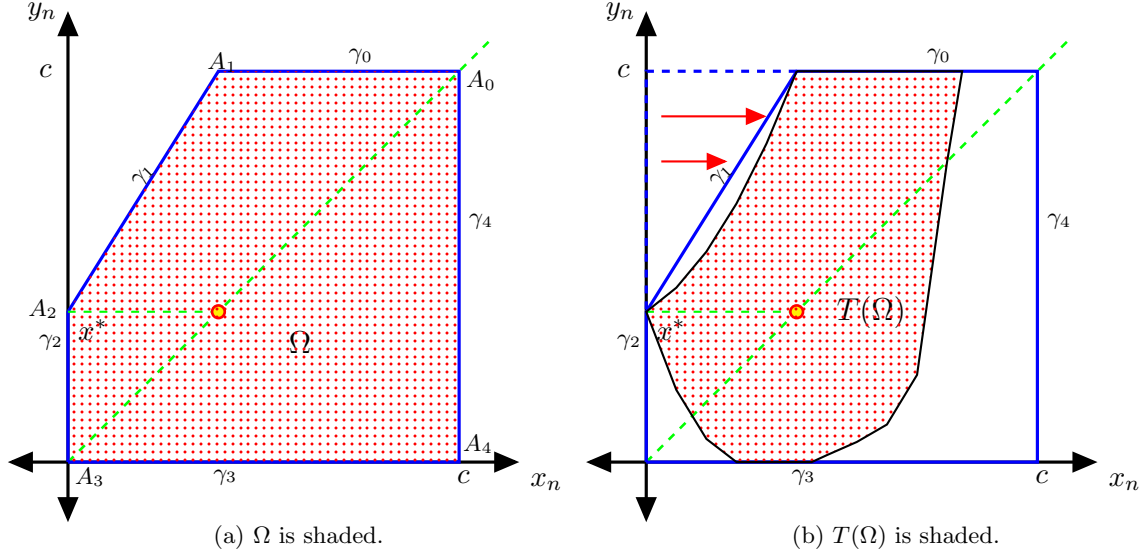
\begin{figure}[!h]
\definecolor{ffqqqq}{rgb}{1.,0.,0.}
\definecolor{qqqqff}{rgb}{0.,0.,1.}
\begin{minipage}[t]{0.48\textwidth}
\raggedleft
\begin{center}
\begin{tikzpicture}[scale=0.40]
\draw[-triangle 45, line width=1.2pt,scale=1.0] (0,0) -- (15.0,0) node[below right] {$x_{n}$};
\draw[-triangle 45, line width=1.2pt,scale=1.0] (0,0) -- (0,15.0) node[left] {$y_n$};
\draw[line width=1.2pt,-triangle 45] (0,0) -- (-2.0,0);
\draw[line width=1.2pt,-triangle 45] (0,0) -- (0.0,-2.0);
\draw[line width=1.0pt,xscale=1.0,domain=0.0:14.0,smooth,variable=\x,green,dashed] plot ({\x},{\x});
\draw[line width=1.2pt,blue] (0,0)--(13,0)--(13,13)--(5,13)--(0,5)--(0,0);
\draw[line width=0.8pt,dashed,green] (5,5) -- (0,5);
\draw[line width=1.0pt,red,fill=yellow] (5,5) circle (6pt);
\draw[scale=1.0] (12.6,-0.5) node[right,rotate=0] { $c$};
\draw[scale=1.0] (-1.3,13.0) node[right,rotate=0] { $c$};
\draw[scale=1.0] (0.0,4.4) node[right,rotate=0] { $x^*$};
\draw[scale=1.0] (4.0,8.0) node[right,rotate=45] {\large $\mathcal{\Omega}$};
\draw[scale=1.0] (-1.3,4.0) node[right,rotate=0] {\footnotesize $\gamma_2$};
\draw[scale=1.0] (4.5,-0.5) node[right,rotate=0] {\footnotesize $\gamma_3$};
\draw[scale=1.0] (2.0,9.0) node[right,rotate=45] {\footnotesize $\gamma_1$};
\draw[scale=1.0] (13.0,8.0) node[right,rotate=0] {\footnotesize $\gamma_4$};
\draw[scale=1.0] (9.0,13.5) node[right,rotate=0] {\footnotesize$\gamma_0$};
\draw[scale=1.0] (7.0,4.0) node[right,rotate=0] {\large $\Omega$};
\draw[scale=1.0] (12.9,12.7) node[right,rotate=0] {\footnotesize $A_0$};
\draw[scale=1.0] (4.3,13.3) node[right,rotate=0] {\footnotesize $A_1$};
\draw[scale=1.0] (0.0,5.1) node[left,rotate=0] {\footnotesize $A_2$};
\draw[scale=1.0] (-0.1,-0.5) node[right,rotate=0] {\footnotesize $A_3$};
\draw[scale=1.0] (12.8,0.6) node[right,rotate=0] {\footnotesize $A_4$};
\fill[line width=0.pt,color=ffqqqq,fill=ffqqqq,pattern=dots,pattern color=ffqqqq] (0,0)--(13,0)--(13,13)--(5,13)--(0,5)--(0,0)-- cycle;
\draw[scale=1.0] (4.5,-2.5) node[right] {\footnotesize (a) $\Omega$ is shaded.};
\end{tikzpicture}
\end{center}
\end{minipage}
\begin{minipage}[t]{0.48\textwidth}
\raggedright
\begin{center}
\begin{tikzpicture}[scale=0.40]
\draw[-triangle 45, line width=1.2pt,scale=1.0] (0,0) -- (15.0,0) node[below right] {$x_{n}$};
\draw[-triangle 45, line width=1.2pt,scale=1.0] (0,0) -- (0,15.0) node[left] {$y_n$};
\draw[line width=1.2pt,-triangle 45] (0,0) -- (-2.0,0);
\draw[line width=1.2pt,-triangle 45] (0,0) -- (0.0,-2.0);
\draw[line width=1.0pt,xscale=1.0,domain=0.0:14.0,smooth,variable=\x,green,dashed] plot ({\x},{\x});
\draw[line width=1.2pt,blue] (0,0)--(13,0)--(13,13)--(5,13)--(0,5)--(0,0);
\draw[line width=0.8pt,dashed,green] (5,5) -- (0,5);
\draw[line width=1.0pt,red,fill=yellow] (5,5) circle (6pt);
\draw[scale=1.0] (12.6,-0.5) node[right,rotate=0] { $c$};
\draw[scale=1.0] (-1.3,13.0) node[right,rotate=0] { $c$};
\draw[scale=1.0] (0.0,4.4) node[right,rotate=0] { $x^*$};
\draw[scale=1.0] (6.0,5.0) node[right,rotate=0] {\large $T(\Omega)$};
\draw[scale=1.0] (-1.3,4.0) node[right,rotate=0] {\footnotesize $\gamma_2$};
\draw[scale=1.0] (4.5,-0.5) node[right,rotate=0] {\footnotesize $\gamma_3$};
\draw[scale=1.0] (2.0,9.0) node[right,rotate=45] {\footnotesize $\gamma_1$};
\draw[scale=1.0] (13.0,8.0) node[right,rotate=0] {\footnotesize $\gamma_4$};
\draw[scale=1.0] (9.0,13.5) node[right,rotate=0] {\footnotesize$\gamma_0$};
\draw[-triangle 45, line width=0.8pt,red] (0.5,11.5) -- (4.0,11.5);
\draw[-triangle 45, line width=0.8pt,red] (0.5,10) -- (2.7,10);
\draw[line width=1.2pt,blue,dashed] (5,13)--(0,13)--(0,5);
\draw[line width=0.8pt,color=black,pattern=dots,pattern color=ffqqqq]  plot[smooth, tension=.0] coordinates {(5,13) (4.0,10.6) (3,8.6) (2,7) (1,5.8) (0,5) (1,2.4) (2.0,0.77) (3,0) (3.5,0) (4,0) (4.5,0) (5.5,0) (6,0.23) (7,0.67) (8,1.25) (9,2.9) (10,10) (10.5,13) (10,13) (8,13) (6,13) (5,13)};
\draw[scale=1.0] (4.5,-2.5) node[right] {\footnotesize (b) $T(\Omega)$ is shaded.};
\end{tikzpicture}
\end{center}
\end{minipage}%
\caption{Part (a) of this figure shows the invariant region $\Omega,$ while part (b) shows  $T(\Omega)$ together with the extension through horizontal projections. The scale is missing to indicate the general form of the region when $0<h<\min\{p,\frac{1}{2}\}.$  }\label{Fig-CompactInvariant}
\end{figure}
\end{center}

\begin{proposition}\label{Pr-NoArtificialFixedPoints}
The map $\widetilde{F}$  has no artificial fixed points in $\mathcal{S}=[0,c]^2.$
\end{proposition}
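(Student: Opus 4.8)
The plan is to study the symmetric map $\widetilde G(x,y)=(\widetilde F(x,y),\widetilde F(y,x))$ on $\mathcal S=[0,c]^2$. By definition the artificial fixed points of $\widetilde F$ are precisely the fixed points $(x,y)$ of $\widetilde G$ with $x\neq y$, so it suffices to show that every fixed point of $\widetilde G$ in $\mathcal S$ lies on the diagonal. Note that if $(x,y)$ is a fixed point of $\widetilde G$ then so is $(y,x)$, so we may freely interchange $x$ and $y$.

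The first step is a geometric reduction. The region $\mathcal S\setminus\Omega$ is the triangular piece cut from the corner $(0,c)$ by the segment $\gamma_1=\overline{A_1A_2}$, with $A_1=(x^*,c)$ and $A_2=(0,x^*)$; since $\gamma_1$ meets the line $y=x^*$ only at $A_2\in\Omega$ and has $x<x^*$ elsewhere, every point of $\mathcal S\setminus\Omega$ satisfies $x<x^*<y$ and in particular lies strictly above the diagonal. Hence $(x,y)$ and $(y,x)$ cannot both lie in $\mathcal S\setminus\Omega$, and after possibly swapping $x$ and $y$ we are left with two cases: Case (A), where $(x,y)\in\Omega$ and $(y,x)\in\Omega$; and Case (B), where $(x,y)\in\mathcal S\setminus\Omega$ (so $x<x^*<y$), which forces $(y,x)\in\Omega$.

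In Case (A) the fixed-point equations read $F(x,y)=x$ and $F(y,x)=y$. Using $q=2p$ and $r=1$ one finds $F(x,y)-F(y,x)=\dfrac{2p(x-y)}{1+x+y}$, so $x-y=\dfrac{2p(x-y)}{1+x+y}$; since $x\neq y$ this forces $1+x+y=2p$. Substituting this into $F(x,y)=x$, i.e. $\dfrac{p(1+2x)}{1+x+y}=x+h$, gives $\tfrac12+x=x+h$, that is $h=\tfrac12$, contradicting $0<h<\tfrac12$. (This is exactly the degenerate value singled out earlier, where $F$ acquires infinitely many artificial fixed points.)

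Case (B) is where the work lies. Here $\widetilde F(x,y)=F(x^+,y)$ with $x^+=\dfrac{(y-x^*)x^*}{c-x^*}$; using $c-x^*=\dfrac{x^*(2x^*+1)}{h}$ this simplifies to $x^+=\dfrac{h(y-x^*)}{2x^*+1}$, which depends only on $y$. The system is $F(x^+,y)=x$ and $F(y,x)=y$, subject to $(x,y)\in\mathcal S\setminus\Omega$, i.e. $x^*<y\le c$ and $x<x^+$. I would solve the second equation for $x$, getting $x=\dfrac{-y^2+(2x^*+h-1)y+x^*}{y+h}$, substitute this and the expression for $x^+$ into the first equation, and clear denominators to reach a single cubic polynomial equation $P(y)=0$ with coefficients depending only on $p$ and $h$. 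The remaining, genuinely computational, task — and the main obstacle — is to show that $P$ has no root $y\in(x^*,c]$ for which the induced $x$ satisfies $x<x^+$, under the standing hypothesis $0<h<\min\{p,\tfrac12\}$. Both equations must really be used: the naive bound $F(x^+,y)\ge x^+$, which would rule out Case (B) at once, holds near $y=x^*$ (where $x^+=0$) but fails near $y=c$, where $x^+=x^*$ and $F(x^*,c)<x^*$. Case (C), where $(y,x)\in\mathcal S\setminus\Omega$, reduces to Case (B) by interchanging $x$ and $y$.
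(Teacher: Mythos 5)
Your setup is correct: artificial fixed points are exactly the off-diagonal fixed points of $\widetilde G$, every point of $\mathcal S\setminus\Omega$ satisfies $x<x^*<y$ (so lies strictly above the diagonal, and $(x,y)$, $(y,x)$ cannot both be outside $\Omega$), and the two-case split is the right reduction. Your Case (A) argument is also correct and is a nice explicit verification of what the paper only calls ``straightforward'': with $q=2p$, $r=1$, subtracting the two equations forces $1+x+y=2p$, which then yields $h=\tfrac12$, contradicting $0<h<\tfrac12$.

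The genuine gap is Case (B), which is where the proposition actually lives. You correctly write the system $F(x^+,y)=x$, $F(y,x)=y$ with $x^+=\frac{h(y-x^*)}{2x^*+1}$, solve the second equation for $x$, and then state that the ``remaining, genuinely computational, task'' is to show the resulting cubic $P(y)$ has no admissible root in $(x^*,c]$ with $x<x^+$ --- but you never carry this out, and that computation is precisely the content of the proposition; your own observation that the shortcut $F(x^+,y)\ge x^+$ fails near $y=c$ shows it cannot be bypassed. The paper's proof is devoted to exactly this step: it sweeps $\mathcal S\setminus\Omega$ by lines $y=mx+x^*$ with $m>\frac{c-x^*}{x^*}=\frac{2x^*+1}{h}$, eliminates $x$ to obtain a cubic in $m$, substitutes $m=M+\frac{c-x^*}{x^*}$, and shows that all coefficients $b_j(x^*,h)$ of the cubic in $M$ share the same sign when $0<h<\tfrac12$ (displaying $b_3$ explicitly), so no $M>0$ exists; the remaining $y$-axis points ($x=0$) are checked separately. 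Your elimination in the variable $y$ is an equally legitimate route, but until you actually establish that $P$ has no root satisfying the constraints (e.g.\ by an analogous sign analysis of its coefficients under $0<h<\min\{p,\tfrac12\}$), the proposal does not prove the statement.
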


\begin{proof}
For $(x,y)\in \Omega,$ $\widetilde{F}(x,y)=F(x,y),$ and it is straightforward to find that the only solution of $(F(x,y),F(y,x))=(x,y)$ is $(x^*,x^*)$ as long as $0<h<\frac{1}{2}.$ Next, consider $(x,y)\in \mathcal{S}\setminus \Omega.$ This means we have to seek solutions $(x,y)$ that satisfy
\begin{equation}\label{Eq-Artificial1}
\widetilde{F}(x,y)=F(x^+,y)=F\left(\frac{(y-x^*)x^*}{c-x^*},y\right)=x\quad\text{and}\quad F(y,x)=y,
\end{equation}
 where $ y>\frac{c-x^*}{x^*}x+x^*$ and $ 0<x<x^*.$
 Handling the two nonlinear equations together with the inequality is a cumbersome task, but we get around it by considering equations (\ref{Eq-Artificial1}) together with $y=mx+x^*,$ and show that no solution exist for $m>\frac{c-x^*}{x^*}=\frac{2x^*+1}{h}.$  This covers the points in $\mathcal{S}\setminus \Omega$ except the $y$-axis. Thus, we investigate solutions of
 $$F\left(\frac{(mx)x^*}{c-x^*},mx+x^*\right)=x\qquad\text{and}\quad  F(mx+x^*,x)=mx+x^*.$$
 For computational conveniences, substitute $p=x^*+h$ and eliminate $x$ from the two equations to obtain a cubic equation in $m$ of the form
 $$a_3(x^*,h)m^3+a_2(x^*,h)m^2+a_1(x^*,h)m+a_0(x^*,h)=0.$$
  Replace $m$ by $M+\frac{c-x^*}{x^*}$ to obtain
  \begin{equation} \label{Eq-Mvalues}
  b_3(x^*,h)M^3+b_2(x^*,h)M^2+b_1(x^*,h)M+b_0(x^*,h)=0.
  \end{equation}
  The coefficients $b_j$ have same sign, which can be verified by writing each $b_j$ as a polynomial in $x^*,$ then depend on the assumption that $0<h<\frac{1}{2}.$ We show $b_3,$ which is
$$ b_3(x^*,h)=h^3(1-h)^2(4(x^*)^3+4(x^*)^2+(1-h)(3h+1)x^*+h(1-h-h^2).$$
The other coefficients are handled similarly.
To this end, it has been shown that no $M>0$ satisfies equation (\ref{Eq-Mvalues}), and consequently no $m>\frac{c-x^*}{x^*}.$
Finally, it remains to consider solutions of $\widetilde{F}(x,y)=x$ and $F(y,x)=y$ when $x=0,$ which is simple and has no possible solutions. Hence, the proof is complete.
\end{proof}
Now, we have all the needed machinery to prove global stability with respect to $\Omega.$

\begin{corollary}
Consider Eq. (\ref{Eq-Example2}) with $0<h<\min\{p,\frac{1}{2}\}.$ The positive equilibrium is globally asymptotically stable with respect to the invariant set $\Omega.$
\end{corollary}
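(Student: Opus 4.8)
The plan is to read the conclusion off Theorem \ref{Th-Main} once its hypotheses have been checked for $F$ on the invariant region $\Omega$ of Lemma \ref{Lem-InvariantDomain}, and then to combine the resulting global attractivity with the local stability of Lemma \ref{Eq-LAS} to obtain the full claim.

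First I would record the monotonicity. With $q=2p$ and $r=1$ we have $F(x,y)=\dfrac{p+2px}{1+x+y}-h$, and a one-line computation gives $\partial_x F=\dfrac{p(1+2y)}{(1+x+y)^2}>0$ and $\partial_y F=-\dfrac{p+2px}{(1+x+y)^2}<0$ on $\mathbb{R}_+^2$, so $F(\uparrow,\downarrow)$. By Lemma \ref{Lem-InvariantDomain} the compact region $\Omega$ with vertices $A_0,\dots,A_4$ is invariant under $T(x,y)=(F(x,y),x)$, and $F$ is continuous there. Next I would confirm that the map $\widetilde F$ displayed just before Proposition \ref{Pr-NoArtificialFixedPoints} is an admissible nice extension to the rectangle $\mathcal S=[0,c]^2\supseteq\Omega$: it is a horizontal ray extension in the sense of Lemma \ref{Lem-Sadok2}, hence continuous, agreeing with $F$ on $\Omega$, and preserving $(\uparrow,\downarrow)$; and it is nice because for $(x,y)\in\mathcal S\setminus\Omega$ the projected point $(x^+,y)$ lies on $\gamma_1\subset\Omega$, so $\widetilde F$ never takes a value outside the range of $F$. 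Proposition \ref{Pr-NoArtificialFixedPoints} already supplies the remaining hypothesis: $\widetilde F$ has no artificial fixed points in $\mathcal S$. With all of this in place, Theorem \ref{Th-Main} applies and yields that every solution of \eqref{Eq-Example2} with $(x_0,x_{-1})\in\Omega$ converges to a fixed point of $F$.

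It then remains to identify the fixed points of $F$ inside $\Omega$. Solving $F(\bar x,\bar x)=\bar x$ reduces to the quadratic $2\bar x^2+(1+2h-2p)\bar x+(h-p)=0$; since $h<p$ its constant term is negative, so it has exactly one nonnegative root, and a direct substitution shows this root is $\bar x=x^*=p-h$. Since moreover $F(0,0)=p-h>0$, the origin is not an equilibrium, so $x^*$ is the unique equilibrium in $\Omega$; hence the convergence statement above becomes: every orbit starting in $\Omega$ converges to $(x^*,x^*)$, i.e. $x^*$ is a global attractor relative to $\Omega$. Finally, because $\tfrac12\le\tfrac{p+1}{2}$ for every $p>0$, the hypothesis $0<h<\min\{p,\tfrac12\}$ implies $0<h<\min\{p,\tfrac{p+1}{2}\}$, so Lemma \ref{Eq-LAS} gives local asymptotic (in particular Lyapunov) stability of $x^*$; together with the global attractivity just established, this is precisely global asymptotic stability with respect to $\Omega$.

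I do not expect a genuine obstacle in this final step: the substantive work sits in the earlier results (the invariant octagonal domain, the explicit nice extension, and the absence of artificial fixed points). The only points requiring a little care are (a) verifying that the extension written before Proposition \ref{Pr-NoArtificialFixedPoints} really satisfies properties (C1)--(C3) and is nice, and (b) confirming that $x^*=p-h$ is the sole equilibrium in $\Omega$, so that the phrase ``convergence to a fixed point of $F$'' in Theorem \ref{Th-Main} upgrades to convergence to the positive equilibrium.
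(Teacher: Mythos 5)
Your proposal is correct and follows essentially the same route as the paper: the paper's proof likewise consists of invoking Proposition \ref{Pr-NoArtificialFixedPoints} to rule out artificial fixed points of the nice extension $\widetilde{F}$ on $\mathcal{S}=[0,c]^2$ and then applying Theorem \ref{Th-Main} to transfer global stability from $\widetilde{F}$ on $\mathcal{S}$ to $F$ on $\Omega$. Your additional checks (mixed monotonicity, uniqueness of the equilibrium $x^*=p-h$, and local stability via Lemma \ref{Eq-LAS}) are details the paper leaves implicit, not a different argument.
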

\begin{proof}
Proposition \ref{Pr-NoArtificialFixedPoints} shows that we have no artificial fixed points for $\widetilde{F}$ as long as $0<h<\min\{p,\frac{1}{2}\}.$ Now, Theorem \ref{Th-Main}  gives the global stability for the extension $\widetilde{F}$ on $\mathcal{S},$ which gives the global stability for   $F$ on $\Omega.$
\end{proof}
It is worth mentioning that it is possible to prove the global stability with respect to the persistent set rather than $\Omega,$ but it is a matter of computations to show that orbits of $T$ get inside $\Omega$ in few iterations.

\section{Conclusion and discussion}
This paper is concerned with  $2$-dimensional continuous maps  of mixed monotonicity that are defined on  compact domains. When the compact domain is convex or semi-convex and the boundary is a piecewise smooth Jordan curve, a continuous function $F$ can be extended to a function $\widetilde{F}$ on a rectangular domain, having the same monotonicity properties, and the extension can be chosen so that its image coincides with that of $F.$ In this case, we call it \emph{nice extension.} We believe this result is generalizable to non-convex domains under proper conditions on the boundary; however,  the general proof is technical. It can be developed further based on the need in real applications.
\\

The embedding technique takes the following convenient form :
Let $\Omega$ be a compact subset of $\mathbb{R}^2$ and $z=F(x,y)$ be continuous on $\Omega.$ Suppose that $F(\uparrow,\downarrow).$
If $F$ has a nice extension $\widetilde{F}$ over a rectangular domain containing $\Omega$ such that  $\widetilde{F}$ has no artificial fixed points, then $x_{n+1}=F(x_n,x_{n-1})$ must converge to a fixed point of $F$ for all  $(x_0,x_{-1})\in \Omega.$ The concept of \emph{artificial fixed points} is used to denote solutions of the system
$$(\widetilde{F}(x,y),\widetilde{F}(y,x))=(x,y)$$
that are not fixed points of $\widetilde{F}.$ The use of this concept is driven by the fact that solutions of this system are in fact fixed points of the higher dimensional map $G$ used in our embedding (see Corollary \ref{Th-Ahmad2} or Corollary \ref{Th-Ahmad}).
This result can be considered as a high dimensional generalization of Coppel's result \cite{Co1955}, which states the following: Suppose $f:\;[a,b]\rightarrow\;[a,b]$ is continuous. A necessary and sufficient condition that the iteration sequence $x_{n+1}=f(x_n)$ converges, whatever initial point $x_0\in [a,b]$ is chosen, is that the equation $f(f(x))=x$ has no roots except the roots of the equation $f(x)=x.$
\\

Two examples have been considered. One of them shows the existence of an invariant box in which global stability has been obtained based on the available results in the literature without the need for an extension. The second example shows the existence of a compact invariant domain that is not a box. In this case, we constructed a nice extension and used our approach to obtain global stability with respect to the compact invariant domain.
\\

Finally, it is worth mentioning that the main obstacle facing our approach is the solution of the system $(\widetilde{F}(x,y),\widetilde{F}(y,x))=(x,y).$ Although the solution of the system can be unique with respect to the original map $F,$ it may not be unique with respect to the extended map $\widetilde{F}.$ From our perspective, this challenge can be tackled by seeking an alternative invariant domain, or by investigating the basin of attraction of each individual equilibrium point, i.e., the original equilibrium points as well as the artificial ones. However, this subject needs further research before a definite  answer can be given.
\\


\bibliographystyle{plain}

\vskip 10pt
{\small
\noindent{\sc Address first author:}\\ Sultan Qaboos University, P. O. Box 36,
 PC 123, Al-Khod, Sultanate of Oman.\\

\noindent{\sc Address second and third authors:}\\ Department of Mathematics and Statistics,
 American University of Sharjah, P. O. Box 26666, University City, Sharjah, UAE}

\end{document}